\theoremstyle{plain}
\theoremstyle{plain}
\newtheorem{theorem}{Theorem} [section]
\newtheorem{lemma}[theorem]{Lemma}
\newtheorem{proposition}[theorem]{Proposition}
\theoremstyle{definition}
\newtheorem{definition}[theorem]{Definition}
\newtheorem{example}[theorem]{Example}
\newtheorem{remark}[theorem]{Remark}
\numberwithin{theorem}{section}
\numberwithin{equation}{section}
\numberwithin{figure}{section}
\def\mean#1{\mathchoice
         {\mathop{\kern 0.2em\vrule width 0.6em height 0.69678ex depth -0.58065ex
                 \kern -0.8em \intop}\nolimits_{\kern -0.4em#1}}%
         {\mathop{\kern 0.1em\vrule width 0.5em height 0.69678ex depth -0.60387ex
                 \kern -0.6em \intop}\nolimits_{#1}}%
         {\mathop{\kern 0.1em\vrule width 0.5em height 0.69678ex
             depth -0.60387ex
                 \kern -0.6em \intop}\nolimits_{#1}}%
         {\mathop{\kern 0.1em\vrule width 0.5em height 0.69678ex depth -0.60387ex
                 \kern -0.6em \intop}\nolimits_{#1}}}
 \renewcommand{\k}{\kappa}
 \newcommand{\R}{\mathbb{R}}
 \newcommand{\T}{\mathbb{T}}
 \newcommand{\N}{\mathbb{N}}
 \newcommand{\Z}{\mathbb{Z}}
 \newcommand{\C}{\mathbb{C}}
 \renewcommand{\H}{\mathbb{H}}
 \def\SP{\mathbb S}
 \def\TCL{{\rm TCL}}
 \newcommand{\DD}{\mathcal{D}}
 \newcommand{\uu}{{\mbox{\boldmath$u$}}}
 \newcommand{\tauV}{{\kern-3pt\tau}}
 \newcommand{\oVVVk}{\overline{\mbox{\boldmath$V$}}\kern-3pt}
 \newcommand{\tVVVk}{\tilde{\mbox{\boldmath$V$}}\kern-3pt}
 \newcommand{\res}{\mathop{\hbox{\vrule height 7pt width .5pt depth 0pt
 \vrule height .5pt width 6pt depth 0pt}}\nolimits}
  \newcommand{\Id}{{\rm Id}}
 \newcommand{\cut}{{\rm cut}}
 \def\Smtw{\mathfrak{S}}
 \def\MTW{{\rm MTW}}
 \def\I{{\rm I}}
 \newcommand{\<}{\langle}
 \renewcommand{\>}{\rangle}
 \renewcommand{\a}{\alpha}
 \renewcommand{\b}{\beta}
 \newcommand{\e}{\varepsilon}
 \newcommand{\g}{\gamma}
 \newcommand{\n}{\nabla}
 \newcommand{\var}{\varphi}
 \renewcommand{\i}{\infty}
 \newcommand{\p}{\partial}
 \renewcommand{\det}{\operatorname{det}}
 \newcommand{\dist}{\operatorname{dist}}
 \newcommand{\diam}{\operatorname{diam}}
 \newcommand{\supp}{\operatorname{supp}}
 \newcommand{\osc}{\operatorname{osc}}
\DeclareMathOperator{\cexp}{c-exp}
\date{}
\title[The Monge-Amp\`ere equation
and its link to optimal transportation]
{The Monge-Amp\`ere equation\\
and its link to optimal transportation}
\author[G. De Philippis]{Guido De Philippis}
\address{Hausdorff Center for Mathematics,
Universit\"at Bonn,
Endenicher Allee 60,
53115 Bonn, Germany}
\email{guido.de.philippis@hcm.uni-bonn.de}
\author[A. Figalli]{Alessio Figalli}
\address{The University of Texas at Austin,
Mathematics Dept. RLM 8.100,
2515 Speedway Stop C1200,
Austin, Texas 78712-1202, USA}
\email{figalli@math.utexas.edu}
\begin{document}
\maketitle

\begin{abstract}
We survey the (old and new) regularity theory for the Monge-Amp\`ere 
equation, show its connection to optimal transportation, and describe the regularity
properties of a general class of Monge-Amp\`ere type equations arising in that context.
\end{abstract}

\tableofcontents

\medskip    \section{Introduction}

The Monge-Amp\`ere equation is a fully nonlinear degenerate elliptic 
equation which arises in several problems from analysis and geometry.
In its classical form this equation is given by
\begin{equation}
\label{eq:MAclassical}
\det D^2u=f(x,u,\n u) \qquad \text{in $\Omega$},
\end{equation}
where $\Omega\subset \R^n$ is some open set,
$u:\Omega \to \R$ is a convex function,
and $f:\Omega\times \R\times \R^n\to \R^+$ is given.
In other words, the Monge-Amp\`ere equation prescribes the 
product of the eigenvalues of the Hessian of $u$, in contrast with the ``model'' elliptic equation
$\Delta u=f$ which prescribes their sum.
As we shall explain later, the convexity of the solution $u$ is a necessary condition to make the equation degenerate elliptic, and so to hope for regularity results.\\

The prototype equation where the Monge-Amp\`ere equation appears is the ``prescribed Gaussian curvature equation": if we take $f=K(x)\bigl(1+|\n u|^2 \bigr)^{(n+2)/2}$ then
\eqref{eq:MAclassical} corresponds to imposing that the Gaussian curvature of the graph of $u$ 
at the point $(x,u(x))$ is equal to $K(x)$. 
Another classical instance where the Monge-Amp\`ere equation arises is affine geometry,
more precisely
in the ``affine sphere problem'' and the ``affine maximal surfaces'' problem (see for instance \cite{cal,pog affine,cy,TW1aff,TW2aff,TW3aff}). The Monge-Amp\`ere equation \eqref{eq:MAclassical} also arises in meteorology and fluid mechanics:
for instance, in the semi-geostrophic equations
it is coupled with a transport equation (see Section \ref{semigeo} below).

As we shall see later, in the optimal transportation problem  the study of Monge-Amp\`ere type equation of the form
\begin{equation}
\label{eq:MAgeneral}
\det\bigl(D^2u - \mathcal A(x,\n u)\bigr)=f(x,u,\n u),
\end{equation}
plays a key role in understanding the regularity (or singularity) of optimal transport maps.

More in general,  Monge-Amp\`ere type equations
of the form 
\begin{equation*}
\det\bigl(D^2u - \mathcal A(x,u,\n u)\bigr)=f(x,u,\n u),
\end{equation*}
have found applications in several other problems, such as
isometric embeddings, reflector shape design,
and in the study of special Lagrangian sub-manifolds, prescribing Weingarten curvature, and in complex geometry on toric manifolds (see for instance the survey paper \cite{TWsurvey}
for more details on
  these geometric applications). \\

The aim of this article is to describe the general regularity theory for the Monge-Amp\`ere equation
\eqref{eq:MAclassical}, show its connections with the optimal transport problem
with quadratic cost,
introduce the more general class of Monge-Amp\`ere type equations \eqref{eq:MAgeneral}
which arise when the cost is not quadratic, and show 
some regularity results for this class of equations.

\medskip    \section{The classical Monge-Amp\`ere equation}
\label{sect:classical MA}
The Monge-Amp\`ere equation \eqref{eq:MAclassical} draws its name from its initial formulation in two dimensions, by the French mathematicians Monge \cite{Mon2} and Amp\`ere \cite{Am}, about two hundred years ago.
As we mentioned above, this equation is degenerate elliptic on convex functions. Before describing some history of the regularity theory for this equation, let us first explain this point.

\begin{remark}[{On the degenerate ellipticity of the Monge-Amp\`ere equation}]
\label{rmk:degen}
{\rm Let $u:\Omega \to \R$ be a smooth solution of \eqref{eq:MAclassical} with $f=f(x)>0$ smooth, and let us try to understand if we can prove some regularity estimates.
A standard technique consists in differentiating the equation solved by $u$ to obtain an equation for its
first derivatives. Hence we fix a direction $e \in \mathbb S^{n-1}$ and we differentiate \eqref{eq:MAclassical} in the direction $e$ to obtain
$$
\det( D^2 u ) \,u^{ij}\p_{ij} u_e=f_e \qquad \text{in $\Omega$},
$$
where $u^{ij}$ denotes the inverse matrix of $u_{ij}:=(D^2u)_{ij}$, lower indices 
denotes partial derivatives  (thus $u_e:=\p_eu)$, and we are summing over repeated indices. Recalling that $\det D^2 u =f>0$ by assumption, we can rewrite the above equation as
\begin{equation}
\label{eq:linearMA}
u^{ij} \p_{ij} u_e=\frac{f_e}{f} \qquad \text{in $\Omega$}.
\end{equation}
Hence, to obtain some regularity estimates on $u_e$ we would like the matrix $u^{ij}$
to be nonnegative definite (and if possible even positive definite) to apply elliptic regularity theory.
But for the matrix $u^{ij}$ to be nonnegative definite we need $D^2u$ to be nonnegative definite,
which is exactly the convexity assumption on $u$.
We now observe that, without any a priori bound on $D^2u$, then $u^{ij}$ may have arbitrarily small eigenvalues
and this is why we say that the equation is ``degenerate elliptic''.
However, if one can show that ${\rm Id}/C \leq D^2u \leq C{\rm Id}$ inside $\Omega$
for some constant $C>0$,
then ${\rm Id}/C \leq u^{ij} \leq C{\rm Id}$ and the linearized equation \eqref{eq:linearMA}
becomes uniformly elliptic. For this reason the bound ${\rm Id}/C \leq D^2u \leq C{\rm Id}$ 
is one of the key steps for the regularity of solutions to \eqref{eq:MAclassical}.
In this regard we notice that, if we assume that $f>0$ is uniformly bounded away from zero, then the product of the eigenvalues of $D^2u$ is bounded away from zero 
and to obtain the estimate ${\rm Id}/C \leq D^2u \leq C{\rm Id}$ it is actually enough to prove only the upper bound $|D^2u|\leq \bar C$ for some constant $\bar C$.}
\end{remark}

We now give a brief overview on the existence and regularity theory for the Monge-Amp\`ere equation.

The first notable results are by Minkowski \cite{M1,M2} who proved the existence of a weak solution to the ``prescribed Gaussian curvature equation" (now called ``Minkowski problem'') by approximation by convex polyhedra with given face areas.
Using convex polyhedra with given generalized curvatures at the vertices, Alexandrov also proved the existence of a weak solution
in all dimensions, as well as the $C^1$ smoothness of solutions in two dimensions \cite{AL1,AL2,AL3}.

In high dimensions, based on his earlier works, Alexandrov \cite{AL4} (and also Bakelman \cite{Bak1} in two dimensions) introduced
a notion of generalized solution to the Monge-Amp\`ere equation and proved the existence and uniqueness of solutions to the Dirichlet problem (see Section \ref{sect:Alex sol}).
The treatment also lead to the Alexandrov-Bakelman maximum principle which plays a fundamental role in the study of non-divergence elliptic equations (see for instance \cite[Section 9.8]{GT}).
As we shall see in Section \ref{sect:Alex sol}, the notion of weak solutions introduced by Alexandrov
(now called ``Alexandrov solutions'') has  continued to be frequently used in recent years,
and a lot of attention has been drawn to prove smoothness of
Alexandrov solutions under suitable assumptions on the right hand side 
and the boundary data.

The regularity of generalized solutions in high dimensions is a very delicate problem.
Pogorelov found a convex function which is not of class $C^2$ but satisfies the
Monge-Amp\`ere equation  \eqref{eq:MAclassical} inside $B_{1/2}$ with positive analytic right hand side (see \eqref{eq:example pogorelov} below).
As we shall describe in Section \ref{sect:Caff Alex}, the main issue in the lack of regularity is the presence of 
a line segment in the graph of $u$.
Indeed, Calabi \cite{cal smooth} and Pogorelov \cite{pog smooth} were able to prove a priori 
 interior second derivative estimate for strictly convex solutions, or for solutions which do not contain a line segment with both endpoints on boundary.
 By the interior regularity theory for fully nonlinear uniformly elliptic equations established by Evans \cite{Ev} and Krylov \cite{Kry} in the 80's,
 Pogorelov's second derivative estimate implies the smoothness of strictly convex Alexandrov's generalized solutions.
 
By the regularity theory developed by Ivochkina \cite{Ivo},
Krylov
\cite{Kry2}, and
Caffarelli-Nirenberg-Spruck \cite{CNS}, using the continuity method (see Section \ref{sect:continuity}
for a description of this method) one obtains globally smooth solutions to the Dirichlet problem.
In particular,
 Alexandrov's  solutions are smooth up to the boundary provided all given data are smooth. 
 
In all the situations mentioned above, one assumes that $f$ is positive and sufficiently smooth. When $f$ is merely 
bounded away from zero and infinity, Caffarelli proved the $C^{1,\alpha}$ regularity of strictly convex solutions \cite{Caf2}.
Furthermore, when $f$ is continuous (resp. $C^{0,\alpha}$), Caffarelli proved by a perturbation argument interior $W^{2,p}$-estimate for any $p > 1$ (resp. $C^{2,\alpha}$ interior estimates)
\cite{Cafw2p}.
More recently, the authors proved interior $L\log L$ estimates on $D^2u$ when $f$ is merely 
bounded away from zero and infinity \cite{DF1}, and together with Savin they improved this result showing that
$u \in W^{2,1+\e}_{\rm loc}$ \cite{DFS}.

In the next sections we will give a description of these results.

\medskip    \subsection{Alexandrov solutions and regularity results}\label{sect:Alex sol} In his study of the Minkowski problem,
Alexandrov introduced a notion of weak solutions to the Monge-Amp\`ere equation allowing him to give a meaning to the Gaussian curvature of non-smooth convex sets.

Let us first recall that, given an open convex domain $\Omega$,
 the subdifferential of a convex function \(u:\Omega \to \R\) is given by
 \[
\partial u (x):=\{p\in \R^{n}\,:\, u(y)\ge u(x)+p\cdot (y-x)\quad \forall\, y \in \Omega\}.
\]
Then, one defines the \emph{Monge-Amp\`ere measure} of \(u\) as follows:
\begin{equation}\label{ch1:eq:mameasure}
\mu_{u}(E):=|\partial u (E)| \qquad \text{for every Borel set $E\subset \Omega$,}
\end{equation}
where 
$$
\partial u (E):=\bigcup_{x\in E}\partial u(x)
$$
and $|\cdot|$ denotes the Lebesgue measure.
It is possible to  show that the restriction of \(\mu_{u}\) to the Borel \(\sigma\)-algebra is actually a measure (see \cite[Theorem 1.1.13]{G}). Note that,
in case \(u\in C^{2}(\Omega)\), the change of variable formula gives
$$
|\partial u (E)|=|\nabla u(E)|=\int_E \det D^{2} u (x)\,dx\qquad \text{for every Borel set $E\subset \Omega$,}
$$
therefore
\[
\mu_{u}=\det D^{2} u(x)\,dx\qquad \text{inside $\Omega$.}
\]

\begin{example}\label{ex:disc}Let \(u(x)=|x|^{2}/2+|x_{1}|\), then (writing \(x=(x_{1},x') \in \R\times \R^{n-1}\))
\[
\partial u (x)=
\begin{cases}
\{x+e_{1}\}\quad &\text{if \(x_{1}> 0\)}\\
\{x-e_{1}\}\quad &\text{if \(x_{1}<0\)}\\
\{(t,x')  :\ |t|\le 1\} &\text{if \(x_{1}=0\)}.
\end{cases}
\]
Thus \(\mu_{u}=dx+\mathcal H^{n-1}\res \{x_{1}=0\}\),
where $\mathcal H^{n-1}$ denotes the $(n-1)$-dimensional Hausdorff measure.
\end{example}


\begin{definition}[Alexandrov solutions]\label{ch1:def:alesol}Given an open convex set \(\Omega\) and a Borel measure \(\mu\) on \(\Omega\), a convex
function \(u:\Omega \to \R\) is called an \emph{Alexandrov solution} to the Monge-Amp\`ere equation
\[
\det D^{2} u =\mu,
\]
if \(\mu=\mu_{u}\) as Borel measures.
\end{definition}

When \(\mu=f\,dx\) we will simply say that \(u\) solves 
\begin{equation}\label{maale}
\det D^2 u=f.
\end{equation}
In the same way, when we write  \(\det D^2 u \ge  \lambda \ (\le 1/\lambda)\) we mean that \(\mu_u \ge \lambda \,dx \ (\le 1/\lambda\,dx)\).

One nice property of the Monge-Amp\`ere measure is that it is stable under uniform convergence
(see \cite[Lemma 1.2.3]{G}):

\begin{proposition}\label{uni}
Let $u_k:\Omega \to \R$ be a sequence of convex functions converging locally uniformly to $u$.
Then the associated Monge-Amp\`ere measures $\mu_{u_k}$ weakly$^\ast$
converge to $\mu_u$ (i.e., in duality with
the space of continuous functions compactly supported in $\Omega$).
\end{proposition}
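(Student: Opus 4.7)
The plan is to verify the two conditions of the Portmanteau-type characterization of weak* convergence of Radon measures: for every compact $K \subset \Omega$,
\[
\limsup_{k \to \infty} \mu_{u_k}(K) \le \mu_u(K),
\]
and for every open $U \Subset \Omega$,
\[
\liminf_{k \to \infty} \mu_{u_k}(U) \ge \mu_u(U).
\]
Together with a uniform local bound on total mass (which follows from the upper bound itself), these imply weak* convergence in duality with $C_c(\Omega)$. The cornerstone is an upper semicontinuity property of subdifferentials that I would establish first: whenever $x_k \to x \in \Omega$ and $p_k \in \partial u_k(x_k)$, the sequence $\{p_k\}$ is bounded -- convex functions converging locally uniformly are uniformly Lipschitz on compact subsets, and subdifferentials are controlled by local Lipschitz constants -- and every cluster point $p$ satisfies $p \in \partial u(x)$, as one sees by passing to the limit in $u_k(y) \ge u_k(x_k) + p_k \cdot (y - x_k)$ for each fixed $y \in \Omega$.

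The upper bound on compacts then follows by a short contradiction argument. Given $\e > 0$, outer regularity of Lebesgue measure yields an open $V \supset \partial u(K)$ with $|V| \le \mu_u(K) + \e$. If $\partial u_k(K) \not\subset V$ along a subsequence, pick $x_k \in K$ and $p_k \in \partial u_k(x_k) \setminus V$; extracting further we get $x_k \to x \in K$ and $p_k \to p$, and the convergence lemma above forces $p \in \partial u(x) \subset V$, contradicting the closedness of $\R^n \setminus V$. Hence $\mu_{u_k}(K) = |\partial u_k(K)| \le |V| \le \mu_u(K) + \e$ for all large $k$.

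For the lower bound on open sets, my approach is to fix a compact $K \subset U$ and prove $\liminf_k \mu_{u_k}(U) \ge \mu_u(K)$; inner regularity of $\mu_u$ then delivers the conclusion by taking the supremum over $K$. The key observation is that the set of $p \in \R^n$ belonging to $\partial u(x)$ for two distinct points $x \in \Omega$ has Lebesgue measure zero, since such $p$ coincide with the non-differentiability set of the Legendre transform $u^{*}$, which is Lebesgue null by standard convex-analysis facts. Therefore, for almost every $p \in \partial u(K)$, every pre-image under $\partial u$ lies inside $K \subset U$. For such $p$, I pick $x_k$ to be a minimizer of $y \mapsto u_k(y) - p \cdot y$ on $\overline{U}$; uniform convergence forces $x_k$ to subsequentially approach a minimizer of $y \mapsto u(y) - p \cdot y$ on $\overline{U}$, which necessarily sits in $U$, so $x_k \in U$ for $k$ large and interior first-order optimality gives $p \in \partial u_k(x_k) \subset \partial u_k(U)$. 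Applying Fatou's lemma to $\mathbf{1}_{\partial u_k(U)}$ yields $\liminf_k |\partial u_k(U)| \ge |\partial u(K)| = \mu_u(K)$.

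The step I expect to be the main obstacle is this last one: making rigorous the ``a.e.\ pre-image stays in $U$'' argument, i.e., identifying the null set of non-unique $p$'s and checking that the minimizers $x_k$ truly land in the open set $U$ (not merely in $\overline{U}$), so that interior first-order optimality legitimately yields $p \in \partial u_k(x_k)$ in the full subdifferential sense. The remaining ingredients -- the subdifferential convergence lemma and the upper-semicontinuity contradiction argument -- are fairly routine.
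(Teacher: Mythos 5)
Your argument is correct and is essentially the proof the paper points to (Lemmas 1.2.2--1.2.3 in Guti\'errez's book \cite{G}): upper semicontinuity of the subdifferentials on compacta gives $\limsup_k\mu_{u_k}(K)\le\mu_u(K)$, the null set of slopes supporting $u$ at more than one point gives the lower bound on open sets via Fatou, and the two together yield weak$^\ast$ convergence. The step you flag does go through: for a.e.\ $p\in\partial u(K)$ the function $y\mapsto u(y)-p\cdot y$ has a \emph{unique} minimizer over $\overline U$ (any other minimizer $y_1$ would satisfy $p\in\partial u(y_1)$, contradicting uniqueness of the touching point), that minimizer lies in $K$, and the minimizers of $u_k(y)-p\cdot y$ over $\overline U$ converge to it, so for large $k$ they lie in the open set $U$ and interior optimality of the convex function $u_k(y)-p\cdot y$ legitimately gives $p\in\partial u_k(x_k)$.
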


We now describe how to prove existence/uniqueness/stability of Alexandrov solutions
for the Dirichlet problem.

As we shall see, the existence of weak solution is proved by an approximation and Perron-type argument, and for this it is actually useful to know a priori that solutions, if they exist, are unique and stable.
We begin with the following simple lemma:

\begin{lemma}\label{ch2:lem:inclsottdiff}Let \(u\) and \(v\) be convex functions in \(\R^n\). If \(E\) is an open and bounded set such that \(u=v\) on \(\partial E\) and \(u\le v\) in \(E\), then
\begin{equation}\label{ch2:eq:inclsottdiff}
\partial u (E)\supset \partial v (E).
\end{equation}
In particular \(\mu_{u}(E)\ge \mu_{v}(E)\).
\end{lemma}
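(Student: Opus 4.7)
The plan is to prove the set inclusion $\partial v(E)\subset \partial u(E)$; the inequality on Monge-Amp\`ere measures then follows directly from the definition $\mu_u(E) = |\partial u(E)|$. Fix $p \in \partial v(x_0)$ with $x_0 \in E$. We want to exhibit a point $x_1 \in E$ such that $p \in \partial u(x_1)$. The natural strategy is to translate an affine function with slope $p$ from below until it first touches the graph of $u$. Concretely, set
\[
c^{*} := \min_{y \in \overline{E}}\bigl(u(y) - p\cdot y\bigr),
\]
which exists and is finite because $u$ is convex on $\R^{n}$ (hence continuous) and $\overline{E}$ is compact. Let $x_{1} \in \overline{E}$ be a point where this minimum is attained.

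The main obstacle is to make sure one can choose $x_{1}$ in the \emph{interior} $E$ rather than on $\partial E$; otherwise one cannot directly conclude that $p$ is a subgradient of $u$ at a point of $E$. This is where the boundary condition $u = v$ on $\partial E$ together with the inequality $u \le v$ in $E$ enter. Indeed, suppose $x_{1} \in \partial E$. Then $u(x_{1}) = v(x_{1})$, so
\[
c^{*} = u(x_{1}) - p\cdot x_{1} = v(x_{1}) - p\cdot x_{1} \ge v(x_{0}) - p\cdot x_{0},
\]
where the last inequality uses $p \in \partial v(x_{0})$. On the other hand, since $x_{0} \in \overline{E}$ and $u(x_{0}) \le v(x_{0})$,
\[
c^{*} \le u(x_{0}) - p\cdot x_{0} \le v(x_{0}) - p\cdot x_{0}.
\]
Combining these two estimates forces equality throughout, and in particular $c^{*} = u(x_{0}) - p\cdot x_{0}$. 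Hence the minimum is also attained at $x_{0} \in E$, and we may replace $x_{1}$ by $x_{0}$. In either case we have produced $x_{1}\in E$ realizing the minimum.

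To finish, I would read off from the minimality that
\[
u(y) \ge u(x_{1}) + p\cdot (y - x_{1}) \qquad \text{for all } y \in \overline{E},
\]
which, since $x_{1}$ is interior, is a valid local support inequality for $u$ at $x_{1}$. Because $u$ is convex on all of $\R^{n}$, a local affine minorant through a graph point extends to a global one (standard separation/monotonicity of incremental ratios), so the inequality holds for every $y \in \R^{n}$ and therefore $p \in \partial u(x_{1}) \subset \partial u(E)$. This proves \eqref{ch2:eq:inclsottdiff}. Finally, the measure inequality $\mu_{u}(E) \ge \mu_{v}(E)$ follows immediately from the monotonicity of Lebesgue measure applied to the inclusion $\partial u(E) \supset \partial v(E)$, using the definition \eqref{ch1:eq:mameasure} of the Monge-Amp\`ere measure.
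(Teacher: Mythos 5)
Your proof is correct and follows essentially the same route as the paper: the minimization of $y\mapsto u(y)-p\cdot y$ over $\overline{E}$ is precisely the paper's ``lower the supporting plane of $v$ and lift it until it touches $u$'' argument, and your treatment of the case where the touching point lands on $\partial E$ (forcing the minimum to be attained at $x_0$ itself) matches the paper's resolution of that case. The only addition is your explicit remark that a local affine minorant at an interior point of a convex function extends globally, which the paper leaves implicit.
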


\begin{proof}Let \(p\in \partial v(x)\) for some \(x\in U\). Geometrically, this means that the plane
\[
y\mapsto v(x)+p\cdot(y-x)
\]
is a supporting plane to  \(v\) at \(x\), that is, it touches from below
the graph of $v$ at the point $(x,v(x))$. Moving this plane down until it lies below $u$ and then lifting it up until it touches the graph of \(u\) for the first time, we see that, for some constant \(a\le v(x)\),
\[
y\mapsto a+p\cdot(y-x)
\]
is a supporting plane to  \(u\) at some point \(\bar x\in \overline E\), see Figure \ref{touching}.

\begin{figure}[tbp]
\begin{center}
\includegraphics[scale=0.65]{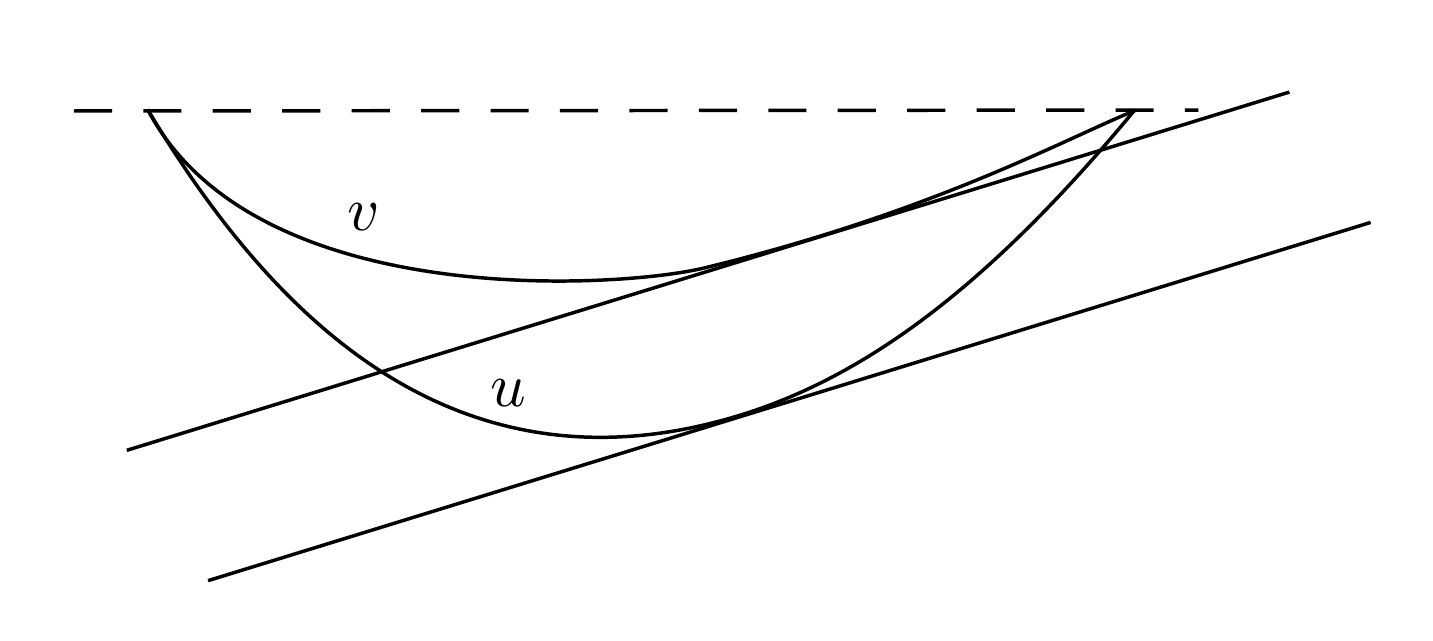}
\caption{Moving down a supporting plane to \(v\) until it lies below the graph of  $u$, and then lifting it up until it touches \(u\), we obtain a supporting place to \(u\) at some point inside \( E\).}
\label{touching}
\end{center}
\end{figure}

Since \(u=v\) on \(\partial E\) we see that, if \(\bar x\in \partial E\), then \(a=v(x)\) and thus \(u(x)=v(x)\) and the plane is also supporting \(u\) at \(x \in E\). In conclusion \(p\in \partial u (E)\), proving the inclusion \eqref{ch2:eq:inclsottdiff}.
\end{proof}

A first corollary is the celebrated Alexandrov's maximum principle:

\begin{theorem}[Alexandrov's maximum principle]\label{ch2:tim:alemax}Let \(u:\Omega \to \R\) be a convex function defined on an open, bounded and convex domain \(\Omega\). If \(u=0\) on \(\partial \Omega\), then
\[
|u(x)|^{n}\le C_{n}(\diam \Omega)^{n-1}\dist(x,\partial \Omega)|\partial u(\Omega)|\qquad \forall x\in\Omega,
\]
where \(C_{n}\) is a geometric constant depending only on the dimension.
 \end{theorem}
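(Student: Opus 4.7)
The plan is to compare $u$ from above with an explicit convex ``cone'' function $v$, and then use Lemma~\ref{ch2:lem:inclsottdiff} to transfer a lower bound on $|\partial v(x)|$ to $|\partial u(\Omega)|$.

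First I would define $v:\overline\Omega\to\R$ as the (convex) function whose graph is the cone with apex $(x, u(x))$ and base $\partial\Omega\times\{0\}$: writing any $y\in\Omega\setminus\{x\}$ uniquely as $y=(1-t)x+tz$ with $z\in\partial\Omega$ and $t\in(0,1]$, set $v(y):=(1-t)u(x)$. Convexity of $u$ along the chord from $x$ to $z$ immediately gives $u\le v$ in $\Omega$, with equality on $\partial\Omega$. Lemma~\ref{ch2:lem:inclsottdiff} then yields
\[
|\partial u(\Omega)|\ge|\partial v(\Omega)|\ge|\partial v(x)|.
\]

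Next I would identify $\partial v(x)$ with the convex ``polar-like'' set
\[
K:=\bigl\{p\in\R^n:\ p\cdot(y-x)\le|u(x)|\ \text{for all}\ y\in\overline\Omega\bigr\},
\]
by testing the supporting-plane inequality for $v$ at boundary points (where $v=0$) for the inclusion $\partial v(x)\subseteq K$, and checking along each ray $[x,z]$ that any $p\in K$ actually supports $v$ at $x$ for the reverse inclusion. The theorem is then reduced to the geometric estimate $|K|\ge c_n|u(x)|^n/(\delta\,d^{n-1})$, where $\delta:=\dist(x,\partial\Omega)$ and $d:=\diam\Omega$.

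For this volume bound---the key step---I would translate $x$ to the origin, pick $\bar x\in\partial\Omega$ realizing $|\bar x|=\delta$, and set $e:=\bar x/\delta$. The supporting hyperplane to $\Omega$ at $\bar x$ orthogonal to $e$ forces $\overline\Omega\subset\{y\cdot e\le\delta\}\cap\overline{B_d(0)}$, so that the ``anisotropic cylinder''
\[
K':=\bigl\{p=se+q:\ 0\le s\le|u(x)|/(2\delta),\ q\perp e,\ |q|\le|u(x)|/(2d)\bigr\}
\]
is contained in $K$: indeed, for any $y\in\overline\Omega$, decomposing $y=(y\cdot e)e+y_\perp$ with $|y_\perp|\le d$,
\[
p\cdot y=s(y\cdot e)+q\cdot y_\perp\le s\delta+|q|d\le|u(x)|.
\]
A direct computation gives $|K'|=c_n|u(x)|^n/(\delta\,d^{n-1})$ for a dimensional constant $c_n$, which together with the first step produces the desired estimate.

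The main obstacle is precisely this anisotropic volume bound: the naive observation $K\supset B_{|u(x)|/d}(0)$ yields only $|K|\gtrsim|u(x)|^n/d^n$ and misses the crucial factor $d/\delta$. Recovering it requires exploiting the asymmetry of $\Omega$ around $x$---namely that $\Omega$ extends only by $\delta$ in the direction $e$ of the nearest boundary point---to stretch $K$ in that direction by the factor $d/\delta$ relative to the isotropic ball.
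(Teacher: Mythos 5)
Your proposal is correct and follows essentially the same route as the paper: compare $u$ with the cone over $\Omega$ with apex $(x,u(x))$, use Lemma~\ref{ch2:lem:inclsottdiff} to reduce to a lower bound on $|\partial v(x)|$, and then exploit the asymmetry of $\Omega$ around $x$ (radius $\diam\Omega$ in all directions but only $\dist(x,\partial\Omega)$ toward the nearest boundary point) to get the anisotropic volume bound. The only cosmetic difference is that you exhibit an explicit cylinder $K'\subset\partial v(x)$, whereas the paper takes the convex hull of the ball $B(0,|u(x)|/\diam\Omega)$ and the single stretched vector $q$ of length $|u(x)|/\dist(x,\partial\Omega)$; both yield the same dimensional constant up to normalization.
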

 
\begin{proof}Let \((x,u(x))\) be a point on the graph of \(u\) and let us consider the cone \(C_{x}(y)\) with vertex on \((x,u(x))\) and base \(\Omega\), that is, the graph of  one-homogeneous function (with respect to dilatation with center \(x\)) which is  \(0\) on \(\partial \Omega\) and equal to \(u(x)\) at \(x\). Since by convexity \(u(y)\le C_{x}(y)\), Lemma \ref{ch2:lem:inclsottdiff} implies
\[
|\partial C_{x}(x)|\le |\partial C_{x}(\Omega)|\le |\partial u(\Omega)|.
\]
(Actually, as a consequence of the argument below, one sees that $\partial C_x(x)=\partial C_x(\Omega)$.)
To conclude the proof we  have only to show that
\[
|\partial C_{x}(x)|\ge \frac{|u(x)|^{n}}{ C_{n}(\diam \Omega)^{n-1}\dist(x,\partial \Omega)}
\]
for some dimensional constant $C_n>0$.
Take \(p\) with \(|p|<|u(x)|/\diam \Omega\), and  consider a plane with slope \(p\).
By first moving it down and lifting it up until it touches the graph of \(C_{x}\), we see that it has to be supporting at some point \(\bar y\in \Omega\). Since \(C_{x}\) is a cone it also has to be supporting at \(x\). This means
\[
\partial C_{x}(x)\supset B(0,|u(x)|/\diam \Omega).
\] 
Let now \(\bar x\in \partial \Omega\) be such that \(\dist (x,\partial \Omega)=|x-\bar x|\) and let \(q\) be a vector with the same direction of \((\bar x-x)\) and with modulus less than \(|u(x)|/\dist (x,\partial \Omega)\). Then the plane \(u(x)+q\cdot (y-x)\) will be supporting \(C_{x}\) at \(x\) (see Figure \ref{ale2}), that is 
\[
q:= \frac{\bar x -x }{|\bar x-x|}\frac{|u(x)|}{|\dist (x,\partial \Omega)}\in \partial C_{x}(x).
\]
\begin{figure}
\begin{minipage}[t]{0.48\textwidth}
\includegraphics[scale=0.6]{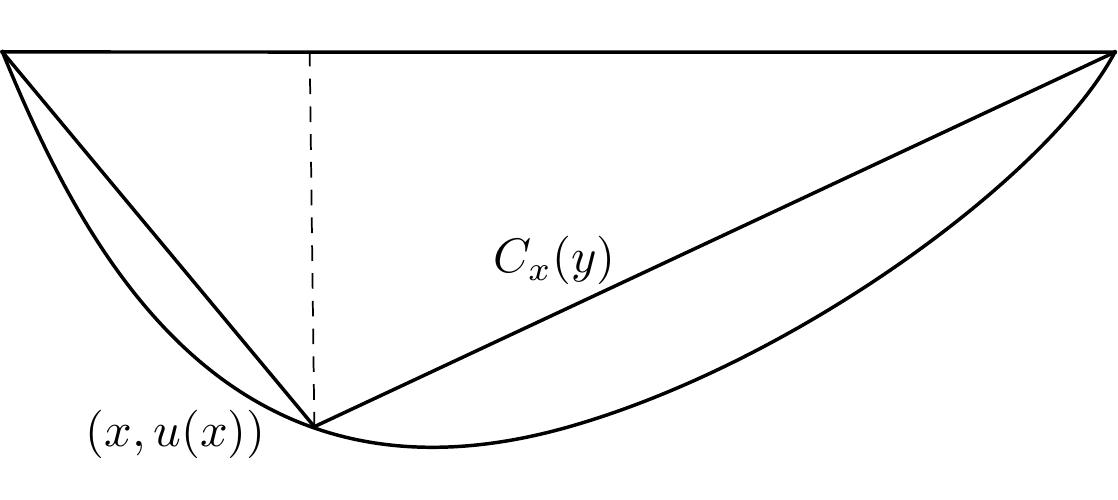}
\end{minipage}
\hfill
\begin{minipage}[t]{0.48\textwidth}
\includegraphics[scale=0.6]{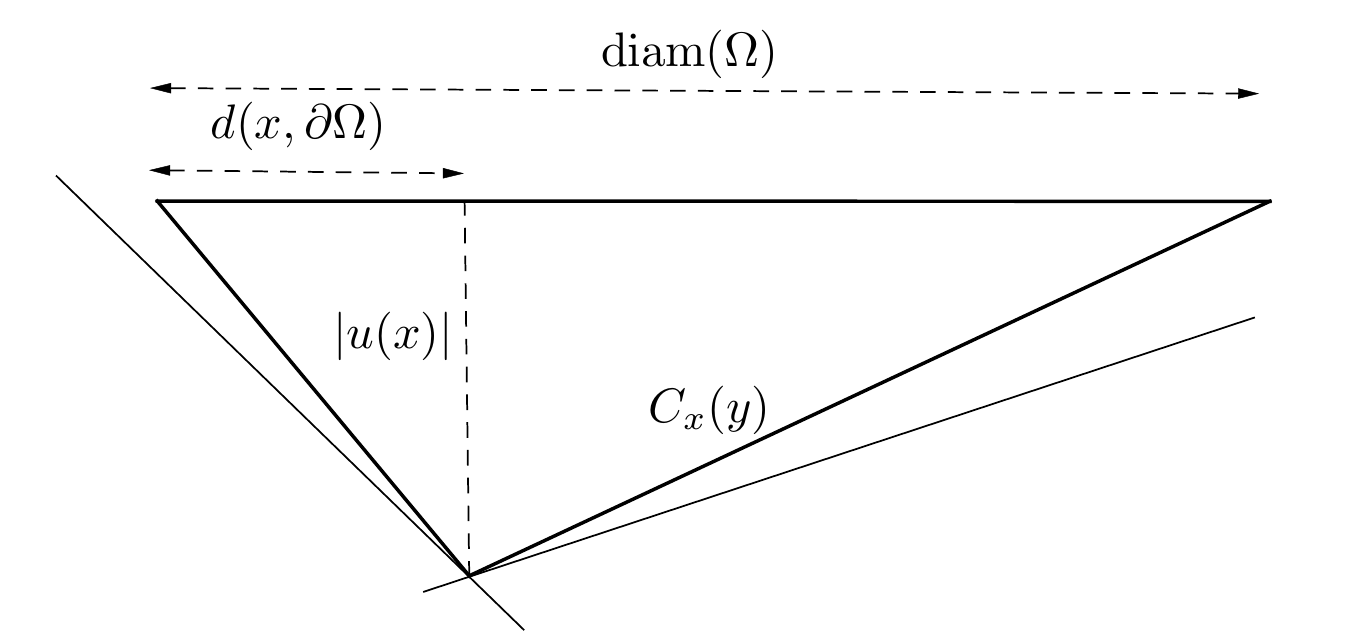}
\end{minipage}
\caption{\small{Every plane with slope \(|p|\le |u(x)|/\diam(\Omega)\) supports the graph of \(C_x\) at \(x\). Moreover there exists a  supporting plane whose slope has size comparable to $|u(x)|/\dist(x,\partial\Omega)$.}}
\label{ale2}
\end{figure}
By the convexity of \(\partial C_{x}(x)\) we have  that it contains the cone \(\mathcal C\) generated by \(q\) and \(B(0,|u(x)|/\diam \Omega)\). Since
\[
|\mathcal C| \geq \frac{|u(x)|^{n}}{ C_n(\diam \Omega)^{n-1}\dist(x,\partial \Omega)},
\] 
this concludes the proof.
\end{proof}

Another consequence of Lemma \ref{ch2:lem:inclsottdiff} is the
following comparison principle:

\begin{lemma}\label{ch2:lem:comparison}
Let \(u,v\) be convex functions defined on an open bounded convex set \(\Omega\). If \(u\ge v\) on \(\partial \Omega\) and (in the sense of Monge-Amp\`ere measures)
\[
\det D^{2}u \le \det D^{2}v\quad \text{in \(\Omega\)}, 
\]
then \(u\ge v\) in \(\Omega\).
\end{lemma}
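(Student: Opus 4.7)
The plan is to argue by contradiction and upgrade the non-strict assumption $\mu_u \le \mu_v$ to a \emph{strict} inequality at the level of Monge--Amp\`ere measures via a quadratic perturbation of $v$. Suppose the open set $E := \{u < v\}$ is nonempty. Since $u \ge v$ on $\partial\Omega$, we have $\overline{E} \subset \Omega$ and $u = v$ on $\partial E$, so Lemma \ref{ch2:lem:inclsottdiff} immediately gives $\mu_u(E) \ge \mu_v(E)$; combined with the hypothesis $\mu_u \le \mu_v$ this only yields equality, which is not yet a contradiction. The idea is to break this equality by replacing $v$ with a slightly smaller convex function whose Monge--Amp\`ere measure is strictly larger.

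Concretely, after fixing a point $x_0 \in E$ and setting $R := \diam(\Omega)$, I would consider
\[
v_\e(x) := v(x) + \e\bigl(|x - x_0|^2 - R^2\bigr),\qquad \e>0 \text{ small}.
\]
By construction $v_\e \le v$ on $\overline{\Omega}$, hence $u \ge v_\e$ on $\partial\Omega$; and $v_\e(x_0) > u(x_0)$ for $\e$ small enough, so the set $E_\e := \{u < v_\e\}$ is open, nonempty, and compactly contained in $\Omega$ with $u = v_\e$ on $\partial E_\e$.

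The key analytic input is the super-additivity of the determinant on the cone of nonnegative symmetric matrices: for any $A\ge 0$ with eigenvalues $\lambda_i\ge 0$,
\[
\det(A + 2\e\, I) = \prod_i(\lambda_i + 2\e) \ge \prod_i \lambda_i + (2\e)^n = \det(A) + (2\e)^n.
\]
Upgraded to the Alexandrov sense by mollifying $v$, applying the pointwise inequality to the smooth approximants, and passing to the weak$^\ast$ limit via Proposition \ref{uni}, this gives
\[
\mu_{v_\e} \ge \mu_v + (2\e)^n\, dx \qquad \text{on } \Omega.
\]

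Finally, applying Lemma \ref{ch2:lem:inclsottdiff} to $u$ and $v_\e$ on $E_\e$, together with the standing hypothesis $\mu_u \le \mu_v$, yields the chain
\[
\mu_v(E_\e) \ge \mu_u(E_\e) \ge \mu_{v_\e}(E_\e) \ge \mu_v(E_\e) + (2\e)^n |E_\e|,
\]
which forces $|E_\e| = 0$ and contradicts the fact that $E_\e$ is a nonempty open set. The only step demanding genuine care is the perturbation estimate $\mu_{v_\e} \ge \mu_v + (2\e)^n\, dx$ for merely convex $v$, where one must invoke the approximation/stability machinery; the rest is essentially bookkeeping about inclusions of contact sets and the behaviour of $v_\e$ near $\partial\Omega$.
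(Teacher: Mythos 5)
Your proposal is correct and follows essentially the same route as the paper's own (sketched) proof: perturb $v$ by $\e\bigl(|x-x_0|^2-\diam(\Omega)^2\bigr)$ to make the Monge--Amp\`ere inequality strict, then apply Lemma \ref{ch2:lem:inclsottdiff} on the nonempty open set where the perturbed comparison fails to reach a contradiction. You merely make explicit what the paper leaves implicit, namely that the perturbation raises the Monge--Amp\`ere measure by at least $(2\e)^n\,dx$ (via superadditivity of the determinant plus approximation and Proposition \ref{uni}), which is a correct and welcome elaboration.
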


\begin{proof}[Sketch of the proof]
Up to replacing $v$ by $v+\e(|x-x_0|^2-{\rm diam}(\Omega)^2)$ where $x_0$
is an arbitrary point in $\Omega$ and then letting $\e \to 0$, we can assume that
$\det D^2u<\det D^2 v$.

The idea of the proof is simple: if $E:=\{u<v\}$ is nonempty, then 
we can apply Lemma \ref{ch2:lem:inclsottdiff} to obtain
$$
\int_E \det D^2 u=\mu_u(E) \geq \mu_v(E)=\int_E \det D^2 v.
$$
This is in contradiction with  $\det D^2u<\det D^2 v$ and concludes the proof.
\end{proof}

A immediate corollary of the comparison principle is the uniqueness for Alexandrov solutions
of the Dirichlet problem.
We now actually state a stronger result concerning the stability of solutions,
and we refer to \cite[Lemma 5.3.1]{G} for a proof.

\begin{theorem}
\label{thm:stability}
Let $\Omega_k\subset \R^n$ be a  family of convex domains, and  let $u_k:\Omega_k \to \R$ be convex Alexandrov solutions of
\begin{equation*}
\label{MA_k}
\begin{cases}
\det D^2 u_k=\mu_k \quad &\text{in $\Omega_k$}\\
u_k=0 &\text{on $\partial \Omega_k$},
\end{cases}
\end{equation*}
where
$\Omega_k$ converge to some convex domain $\Omega$
in the Hausdorff distance, and 
$\mu_k$ is a sequence of 
nonnegative Borel measures with $\sup_k\mu_k(\Omega_k)<\infty$ and which
converge weakly$^\ast$ to a Borel measure $\mu$.
Then $u_k$ converge uniformly to the Alexandrov solution of
\[
\begin{cases}
\det D^2 u=\mu \quad &\text{in $\Omega$}\\
u=0 &\text{on $\partial \Omega$}.
\end{cases}
\] 
\end{theorem}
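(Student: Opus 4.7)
The plan is to combine Alexandrov's maximum principle (Theorem 2.2) with weak$^*$ stability of Monge-Amp\`ere measures (Proposition 2.3) and uniqueness from the comparison principle (Lemma 2.4). By uniqueness, it suffices to show that every subsequence admits a further subsequence converging locally uniformly to the Alexandrov solution of the limiting Dirichlet problem.

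First, I would extract a compact limit. Applying Alexandrov's maximum principle to each $u_k$ gives the pointwise bound
\[
|u_k(x)|^n \le C_n \bigl(\diam \Omega_k\bigr)^{n-1} \dist(x,\partial \Omega_k)\, \mu_k(\Omega_k) \qquad \forall\, x\in \Omega_k.
\]
Since $\Omega_k \to \Omega$ in Hausdorff distance and $\sup_k \mu_k(\Omega_k)<\infty$, this yields a uniform $L^\infty$ bound on the $u_k$ on compact subsets of $\Omega$. Combined with convexity, this implies that the $u_k$ are locally uniformly Lipschitz in $\Omega$, so by Arzel\`a-Ascoli, up to a subsequence, $u_k$ converges locally uniformly in $\Omega$ to some convex function $u:\Omega\to \R$.

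Next, I would identify the limit equation. By Proposition 2.3, $\mu_{u_k}\weakto \mu_u$ in the sense of compactly supported continuous functions in $\Omega$. Since by assumption $\mu_{u_k}=\mu_k \weakto \mu$, we conclude $\mu_u=\mu$ in $\Omega$, so $u$ is an Alexandrov solution of $\det D^2 u=\mu$ in $\Omega$.

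The main obstacle is verifying the boundary condition $u=0$ on $\partial\Omega$, since uniform convergence holds only on compact subsets of $\Omega$ and $\partial\Omega_k$ need not coincide with $\partial\Omega$. Here Alexandrov's maximum principle (Theorem 2.2) delivers, for every $x \in \Omega_k$,
\[
|u_k(x)| \le \bigl[C_n(\diam \Omega_k)^{n-1}\mu_k(\Omega_k)\bigr]^{1/n}\dist(x,\partial \Omega_k)^{1/n},
\]
with a right-hand side bounded by $M\,\dist(x,\partial \Omega_k)^{1/n}$ for some $M$ independent of $k$. Given $x_0\in \partial \Omega$ and $y\in \Omega$ close to $x_0$, the Hausdorff convergence $\Omega_k\to \Omega$ gives $\dist(y,\partial \Omega_k)\to \dist(y,\partial \Omega)$, hence passing $k\to \infty$ we obtain $|u(y)| \le M \dist(y,\partial \Omega)^{1/n}$, which forces $u(y) \to 0$ as $y \to x_0$. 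Thus $u$ attains the boundary value $0$ continuously on $\partial\Omega$, and is therefore the Alexandrov solution of the limiting Dirichlet problem.

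Finally, uniqueness of this solution (an immediate consequence of the comparison principle, Lemma 2.4, applied in $\Omega$) shows the limit is independent of the chosen subsequence, so the full sequence $u_k$ converges locally uniformly to $u$. A short extra step, again via the Alexandrov bound on $|u_k|$ near $\partial\Omega_k$, upgrades this to uniform convergence up to the boundary in the sense appropriate to the varying domains.
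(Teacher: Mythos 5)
Your proof is correct and follows the standard argument (the paper itself omits the proof, referring to Guti\'errez's book, where essentially this same scheme is used): compactness from Alexandrov's maximum principle plus convexity, identification of the interior equation via Proposition \ref{uni}, recovery of the zero boundary datum from the quantitative Alexandrov bound $|u_k(x)|\le M\dist(x,\partial\Omega_k)^{1/n}$, and uniqueness from the comparison principle to pass from subsequences to the full sequence. The only points worth making explicit are that Hausdorff convergence of the \emph{convex} domains guarantees that each compact $K\Subset\Omega$ is eventually contained in $\Omega_k$ (so the local Lipschitz bounds and Proposition \ref{uni} apply on exhausting subdomains), and that the final upgrade to uniform convergence combines the interior locally uniform convergence with the uniform smallness of both $u_k$ and $u$ near the respective boundaries.
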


Thanks to the above stability result, we can now prove the existence of solutions
by first approximating the right hand side with atomic measures, and then solving the latter problem
via a Perron-type argument, see \cite[Theorem 1.6.2]{G} for more details.

\begin{theorem}
Let $\Omega$ be a bounded open convex domain, and let $\mu$ be a nonnegative Borel measure in $\Omega$.
Then there exists an Alexandrov solution of
\begin{equation}
\label{eq:MA dirichlet}
\begin{cases}
\det D^2 u=\mu \quad&\text{in \(\Omega\)}\\
u =0 &\text{on \(\p \Omega\)}.
 \end{cases}
\end{equation}
\end{theorem}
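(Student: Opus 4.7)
The plan, as hinted by the authors, is to first solve the Dirichlet problem for atomic data via a Perron-type construction, and then to pass to the limit for a general Borel measure $\mu$ using the stability theorem.

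First I would fix an atomic datum $\mu = \sum_{i=1}^N \alpha_i \delta_{x_i}$ with $x_i \in \Omega$ and $\alpha_i > 0$, and introduce the Perron class
\[
\mathcal F := \{v : \Omega \to \R \,:\, v \text{ convex},\ v \le 0 \text{ on } \partial\Omega,\ \mu_v \ge \mu\}.
\]
The class is nonempty: for each $i$, a convex cone $C_i$ with vertex at $(x_i,-h_i)$ and base $\partial\Omega$ satisfies $|\partial C_i(x_i)| \gtrsim (h_i/\diam\Omega)^n$ (by the same argument used in the proof of Theorem \ref{ch2:tim:alemax}), so taking $h_i$ large gives $\mu_{C_i}(\{x_i\}) \ge \alpha_i$; the sum $\sum_i C_i$ is then a member of $\mathcal F$. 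Setting $u := \sup_{v \in \mathcal F} v$, standard facts about suprema of convex functions imply that $u$ is convex with $u = 0$ on $\partial\Omega$, while Proposition \ref{uni} lets one pass to the limit in the inequality $\mu_v \ge \mu$ along an approximating sequence, yielding $\mu_u \ge \mu$ and in particular $u \in \mathcal F$.

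The main obstacle is the reverse inequality $\mu_u \le \mu$, which is driven by the maximality of $u$. The key observation is that $u$ must be affine in a neighborhood of every point $x_0 \in \Omega \setminus \{x_1,\dots,x_N\}$: otherwise, choosing a supporting plane $\ell$ to $u$ at $x_0$ and a small disk $D \Subset \Omega \setminus \{x_1,\dots,x_N\}$ containing $x_0$, the modification $\tilde u := \max(u,\ell + \varepsilon)$ on $D$ (extended by $u$ outside) would be a convex function strictly larger than $u$ somewhere; since the atoms of $\mu$ are untouched one still has $\mu_{\tilde u}(\{x_i\}) = \mu_u(\{x_i\}) \ge \alpha_i$ and hence $\mu_{\tilde u} \ge \mu$, contradicting $u = \sup \mathcal F$. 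Thus $\mu_u$ is concentrated on $\{x_1,\dots,x_N\}$, and an analogous local lifting at each $x_i$ (using the fact that $\partial u(x_i)$ is a convex body of measure $\ge \alpha_i$ from which a small piece can be shaved off) forces the equality $\mu_u(\{x_i\}) = \alpha_i$. Uniqueness in this step is immediate from the comparison principle Lemma \ref{ch2:lem:comparison}.

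Finally, for a general finite Borel measure $\mu$ on $\Omega$, I would approximate by atomic measures: partition $\Omega$ into Borel cells $E_i^{(k)}$ of diameter at most $1/k$, pick $x_i^{(k)} \in E_i^{(k)}$, and set $\mu_k := \sum_i \mu(E_i^{(k)})\,\delta_{x_i^{(k)}}$. Then $\mu_k$ converges weakly$^\ast$ to $\mu$ with $\sup_k \mu_k(\Omega) = \mu(\Omega) < \infty$ (if $\mu(\Omega) = +\infty$ one truncates first). The previous step supplies Alexandrov solutions $u_k$ of the Dirichlet problem with data $\mu_k$ and zero boundary values; Alexandrov's maximum principle (Theorem \ref{ch2:tim:alemax}) gives a uniform bound $\|u_k\|_\infty \le C(\diam\Omega, \mu(\Omega), n)$, and Theorem \ref{thm:stability} applied with $\Omega_k \equiv \Omega$ then yields uniform convergence of $u_k$ to the sought Alexandrov solution $u$ of \eqref{eq:MA dirichlet}.
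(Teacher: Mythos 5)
Your overall architecture coincides with the paper's: solve the atomic Dirichlet problem by a Perron construction, then pass to a general $\mu$ via weak$^\ast$ approximation by atomic measures and Theorem \ref{thm:stability}. The nonemptiness of the Perron class via cones, the closure under maxima and suprema, the boundary values, and the final limiting step all match the paper and are fine. The problem lies in the central step, where you argue that $\mu_u$ does not charge $\Omega\setminus\{x_1,\dots,x_N\}$.

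Your ``key observation'' --- that the maximal element $u$ must be \emph{affine in a neighborhood} of every non-atom point --- is false. Already for a single atom, $\mu=\alpha\,\delta_{x_1}$ on $\Omega=B_R(x_1)$, the solution is the cone $u=c\,(|x-x_1|-R)$, which is affine in no neighborhood of any point, and yet $\mu_u=\alpha\,\delta_{x_1}$. Your lifting argument fails to rule this out precisely because of the gap it contains: the contact set $\{u=\ell\}$ of a supporting plane $\ell$ at $x_0$ need not be contained in the small disk $D$ (for the cone it is an entire ray through the vertex), so $\{u<\ell+\varepsilon\}$ is not compactly contained in $D$ for any $\varepsilon>0$, and ``$\max(u,\ell+\varepsilon)$ on $D$, extended by $u$ outside'' is not a well-defined convex function; worse, the contact set may contain an atom $x_j$, in which case even the global lift $\max\{u,\ell+\varepsilon\}$ destroys the mass $\mu_u(\{x_j\})$ and leaves the Perron class. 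The correct argument (the one in the paper) assumes $\mu_u(\Omega\setminus\{x_i\})>0$ and selects a point $\bar x\notin\{x_1,\dots,x_N\}$ together with a slope $p\in\partial u(\bar x)\setminus\partial u(\{x_1,\dots,x_N\})$; such a $p$ exists because the set of slopes belonging to the subdifferential at two distinct points has Lebesgue measure zero, so $\partial u(\Omega\setminus\{x_i\})\setminus\partial u(\{x_i\})$ has positive measure. The condition $p\notin\partial u(x_j)$ says exactly that $u(x_j)>u(\bar x)+p\cdot(x_j-\bar x)$ for every $j$, which is what guarantees that the \emph{global} lift $\max\bigl\{u,\,u(\bar x)+p\cdot(\cdot-\bar x)+\delta\bigr\}$, for $\delta$ small, coincides with $u$ near each atom, hence stays in the Perron class while exceeding $u$ at $\bar x$. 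Your subsequent ``shaving'' step at the atoms and the stability passage to general $\mu$ are in the right spirit, but the concentration step needs the repair above.
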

\begin{proof}[Sketch of the proof]
Let $\mu_k=\sum_{i=1}^k \a_i \delta_{x_i}$, $\alpha_i \geq 0$, be a family of atomic measures which
converge weakly to $\mu$.
By the stability result from Theorem \ref{thm:stability} it suffices to construct a solution for $\mu_k$.

For this, we consider the family of all subsolutions \footnote{The name subsolution
is motivated by  Lemma \ref{ch2:lem:comparison}. Indeed, if $v \in S[\mu_k]$
and $u$ solves \eqref{eq:MA dirichlet} with \(\mu=\mu_k\) then $v \leq u$.}
$$
S[\mu_k]:= \{ v:\Omega \to \R\,:\, v \text{ convex},\,
 v =0 \text{ on \(\p \Omega\)},\,  \det D^2 v \geq \mu_k\}.
$$
First of all we notice that $S[\mu_k]$ is nonempty: indeed, 
it is not difficult to check that a function in this set is given by
 $$
 -A\sum_{i=1}^k C_{x_i},
 $$
 where $C _{x}$ is the ``conical'' one-homogeneous function
 which takes value $-1$ at $x$ and vanishes on $\partial \Omega$,
 and $A>0$ is a sufficiently large constant.

Then, by a variant of the argument used in the proof of Lemma \ref{ch2:lem:comparison}
one shows that
$$
v_1,v_2 \in S[\mu_k]\quad \Rightarrow \quad \max\{v_1,v_2\} \in S[\mu_k],
$$
and using Proposition \ref{uni}
one sees that  the set $S[\mu_k]$ is also closed under suprema.
Hence the function $u_k:=\sup_{v \in S[\mu_k]}v$
belongs to $S[\mu_k]$, and one wants to show that $u_k$ is actually a solution
(that is, it satisfies $\det D^2u_k=\mu_k$).

\begin{figure}
\begin{minipage}[t]{0.48\textwidth}
\includegraphics[scale=1.0]{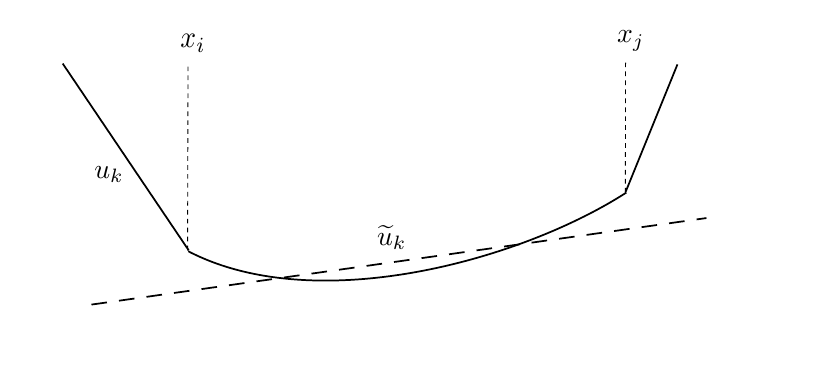}
\end{minipage}
\hfill
\begin{minipage}[t]{0.48\textwidth}
\includegraphics[scale=1.0]{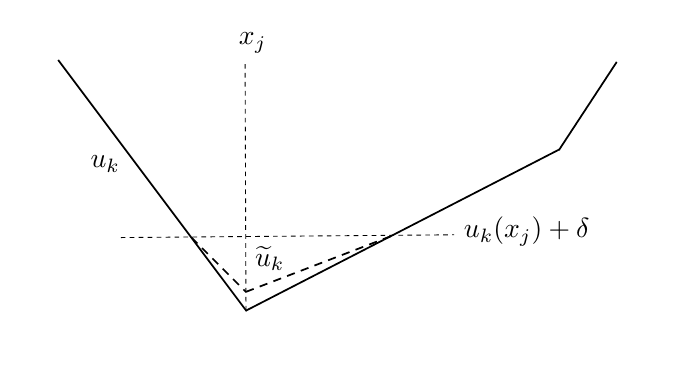}
\end{minipage}
\caption{\small{On the left, the function $\widetilde u_k$ is obtained by cutting the graph of $u_k$
with a supporting hyperplane at some point $\bar x \in \Omega\setminus \{x_1,\ldots,x_k\}$.
On the left, the function $\widetilde u_k$ is obtained by vertically dilating by a factor $(1-\delta)$
the graph of $u_k$ below the level $u_k(x_j)+\delta$.
}}
\label{MSRI}
\end{figure}

To prove this fact, one 
first shows that $\det D^2u_k$ is a measure concentrated on the set of points $\{x_1,\ldots,x_k\}$.
Indeed, if not, there would be at least
 a point $\bar x \in \Omega\setminus \{x_1,\ldots,x_k\}$ and a vector  $p \in \R^n$ such that $p \in \partial u(\bar x)\setminus \partial u(\{x_1,\ldots,x_k\})$. This means that 
 \[
 u_k(x_j)>u(\bar x)+p\cdot(x_j-\bar x) \qquad \forall \,j  \in\{1,\ldots,k\},
 \]
hence
 we can define
 $$
 \widetilde u_k(x):=\max\{u_k(x),u_k(\bar x)+p\cdot (y-\bar x)+\delta\}
 $$
 for some $\delta>0$ sufficiently small to find a larger subsolution  (see Figure \ref{MSRI}), contradiction.
 
 Then one proves that $\det D^2u_k=\mu_k$. Indeed, if this was not the case,
 we would get that $\det D^2u_k=\sum_{i=1}^k \b_i \delta_{x_i}$ with $\beta_i\geq \a_i$,
 and $\beta_j>\a_j$
 for some $j \in \{1,\ldots,k\}$. Consider $p$ in the interior of $\partial u(x_j)$
 (notice that $ \partial u(x_j)$ is a convex set of volume $\beta_j>0$, hence it has nonempty interior)
 and assume without loss of generality that $p=0$ (otherwise simply subtract $p\cdot y$ from $u_k$).
 Then we define the function
 $$
  \widetilde u_k(x):=
  \left\{
\begin{array}{ll}
u_k(x) & \text{if $u_k>u_k(x_j)+\delta$,}\\
(1-\delta)u_k(x)+\delta[u_k(x_j)+\delta] & \text{if $u_k \leq u_k(x_j)+\delta$,}
\end{array}  
 \right.
 $$
 for some $\delta>0$ sufficiently small (see Figure \ref{MSRI}) 
 and observe that this is a larger subsolution,
 again a contradiction.

Finally, the fact that $u_k=0$ on $\partial \Omega$
follows form the bound $u_k(x) \geq -C{\rm dist}(x,\partial\Omega)^{1/n}$ 
which is a consequence of  Theorem \ref{ch2:tim:alemax}.
\end{proof}

\medskip    \subsection{The continuity method and existence of smooth solutions}
\label{sect:continuity}

Existence of smooth solutions to the Monge-Amp\`ere equation dates back to the work of Pogorelov. The way  they are obtained (together with nice and useful regularity estimates)  is through the well-celebrated \emph{method of continuity} which now we briefly describe (see \cite[Chapter 17]{GT} for a more detailed exposition). Let us assume that we know how to find a smooth (convex) solution \(\bar u\) to 
\[
\begin{cases}
\det D^2 \bar u=\bar f\quad&\text{in \(\Omega\)}\\
\bar u =0 &\text{on \(\p \Omega\)}
 \end{cases}
 \]
 and that we want  to find a solution to
 \footnote{
  Here  we are considering only the case in which \(\bar f=\bar f(x)\) and \(f=f(x)\), i.e., there is no dependence on the right hand side from the ``lower order'' terms \(u\) and \(\n u\). The case \(f=f(x,u,\n u)\) is just  more tedious but the ideas/techniques are essentially the same. Note however that, in this case, one has to assume \(\partial_u f\le 0\) in order to apply the classical elliptic theory (in particular,
 the maximum principle) to the linearized operator, see for instance \cite[Chapter 17]{GT}. 
 }
 \begin{equation}\label{maloc}
\begin{cases}
\det D^2  u= f\quad&\text{in \(\Omega\)}\\
 u =0 &\text{on \(\p \Omega\)}.
 \end{cases}
 \end{equation}

 Let us  define \(f_t=(1-t)\bar f+tf\), \(t\in [0,1]\), and  consider the \(1\)-parameter family of problems
  \begin{equation}\label{MAt}
\begin{cases}
\det D^2  u_t= f_t\quad&\text{in \(\Omega\)}\\
 u_t =0 &\text{on \(\p \Omega\)}.
 \end{cases}
 \end{equation}
The method of continuity consists in showing that  the set of \(t \in [0,1]\) such that \eqref{MAt} is solvable is both open and closed, which implies the existence  of a solution  to our original problem.  More precisely, let us assume that \(f,\bar f\) are smooth and consider the set
 \[
 \mathcal C:=\{u:\overline \Omega \to \R \text{ convex functions of class \(C^{2,\alpha}(\overline \Omega)\), \(u=0\) on \(\p \Omega\)}\}.
 \]
Consider the non-linear map
 \[
 \begin{split}
 \mathcal F\colon \mathcal C\times[0,1]&\longrightarrow C^{0,\alpha}(\overline \Omega)\\
(u,t)&\mapsto  \det D^2 u-f_t.
\end{split}
 \]
We would like to show  that
\[
\mathcal T:=\{t\in [0,1]: \text{ there exists a \(u_t \in \mathcal C\) such that \(\mathcal F(u_t,t)=0\)}\},
\] 
is both open and closed inside $[0,1]$ (recall that, by assumption, $0 \in \mathcal T$).
Openness follows from the Implicit Function Theorem in Banach spaces (see \cite[Theorem 17.6]{GT}). Indeed, the Frech\`et differential of \(\mathcal F\) with respect to \(u\) is given by the linearized Monge-Amp\`ere operator: 
\begin{equation}\label{linearized}
D_u \mathcal F(u,t)[h]=\det( D^2 u )u^{ij} h_{ij}\,,\qquad h=0\text{ on \(\p \Omega\),}
\end{equation}
where we have set \(h_{ij}:=\partial_{ij} h\),  \(u^{ij}\) is the inverse of \( u_{ij}:=\partial_{ij}u\), and we are summing over repeated indices. Notice that if  \(u\) is bounded in  \(C^{2,\alpha}\) and  \(f\) is bounded from below by \(\lambda\), then the smallest eigenvalue of \(D^2 u\) is bounded uniformly away from zero and the linearized operator  becomes uniformly elliptic and with \(C^{0,\alpha}\) coefficients
(see also Remark \ref{rmk:degen}).
Therefore, classical Schauder's theory gives  the invertibility of  \(D_u \mathcal F(u,t)\) \cite[Chapter 6]{GT}.

The task is now to prove closedness of \(\mathcal T\). This is done through \emph{a priori estimates} both at the  interior  and at the boundary. As  we already noticed in Remark \ref{rmk:degen},  the Monge-Amp\`ere equation becomes uniformly elliptic on uniformly convex functions. Since \(\det D^2 u\) is bounded away from zero,   the main task is to establish an a priori bound on the \(C^2\) norm of \(u\) in \(\overline \Omega\) since this will imply that  the smallest eigenvalue of \(D^2 u\) is bounded away from zero. Once the equation becomes uniformly elliptic, Evans-Krylov Theorem \cite[Theorem 17.26 \negthickspace\negthickspace\'{}]{GT} will provide a -priori \(C^{2,\alpha}\) estimates up to the boundary, from which the closedness of \(\mathcal T\) follows by the Ascoli-Arzel\`a Theorem.
 
\begin{theorem}\label{po2}Let \(\Omega\) be a uniformly convex domain \footnote{We say that a domain is uniformly convex if  there exists a radius \(R\) such that
\[\Omega\subset B_{R}(x_0+R\nu_{x_0})
\qquad \text{for every \(x_0\in \partial \Omega\)},
\] where \(\nu_{x_0}\) is the interior normal to \(\Omega\) at \(x_0\). Note that for a smooth  domain this is equivalent to ask the second fundamental form of \(\partial \Omega\) to be (uniformly) positive definite.}
of class
\(C^3\), and let \(u\) be a solution of \eqref{maloc} with \(f\in C^2(\overline \Omega)\) and  \(\lambda \le f\le1/\lambda\). Then there exists a constant \(C\), depending only on \(\Omega\), \(\lambda\), \(\|f\|_{C^2(\overline \Omega)}\), such that
\[
\|D^2 u\|_{C^0(\overline \Omega)}\le C.
\] 
\end{theorem}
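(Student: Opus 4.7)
The plan is the Caffarelli-Nirenberg-Spruck strategy: by Remark \ref{rmk:degen}, it suffices to bound $\|D^2 u\|_{C^0(\overline\Omega)}$, because once $D^2 u$ is two-sided controlled the equation becomes uniformly elliptic with H\"older coefficients, and Evans-Krylov plus Schauder theory then deliver the higher regularity required to close the continuity method. The strategy proceeds through $C^0$, $C^1$, boundary $C^2$, and finally global $C^2$ estimates.

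For the preliminary bounds: since $u = 0$ on $\p\Omega$ and $f \le 1/\lambda$, Alexandrov's maximum principle (Theorem \ref{ch2:tim:alemax}) gives $\|u\|_\infty \le C$. Convexity forces $\sup_\Omega|\n u|$ to be attained on $\p\Omega$, and at any $x_0 \in \p\Omega$ the uniform interior-ball property combined with the $C^0$ bound and linear comparison functions yields $|\n u(x_0)| \le C$, hence $\|\n u\|_\infty \le C$.

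The $C^2$ bound splits into an interior step (reducing to the boundary via a maximum principle) and a boundary step. For the interior step, differentiating $\log\det D^2 u = \log f$ twice along a direction $\xi$ yields
\[
u^{ij}u_{ij\xi\xi} \;=\; (\log f)_{\xi\xi} + u^{ik}u^{j\ell}u_{k\ell\xi}u_{ij\xi},
\]
so $L(u_{\xi\xi}) \ge (\log f)_{\xi\xi}$ for the linearized operator $L := u^{ij}\p_{ij}$. Applied to a Pogorelov-type auxiliary function such as $w = u_{\xi\xi}\exp\bigl(\alpha|\n u|^2/2 + \beta u\bigr)$ with $\alpha,\beta$ large, a computation shows $Lw \ge 0$, so $\sup_{\overline\Omega}w$ is attained on $\p\Omega$ and the interior problem is reduced to a boundary $C^2$ estimate. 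For the boundary step, fix $x_0 \in \p\Omega$ and an orthonormal frame of tangent directions $\tau_1,\dots,\tau_{n-1}$ and inner unit normal $\nu$. Differentiating the identity $u \equiv 0$ on $\p\Omega$ twice along a boundary curve gives $u_{\tau_i\tau_j}(x_0) = -\II_{ij}(x_0)\,u_\nu(x_0)$, so the tangential-tangential block is immediately controlled by the $C^1$ bound and the $C^3$ regularity of $\p\Omega$. For the mixed component $u_{\tau\nu}(x_0)$, one uses that $u_\tau$ satisfies $L u_\tau = (\log f)_\tau$, and constructs (exploiting the uniform interior ball at $x_0$) a local barrier built from $\dist(\cdot,\p\Omega)$ and a tangential quadratic that dominates $u_\tau$ in a neighborhood of $x_0$. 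Finally, $u_{\nu\nu}(x_0)$ is read off from $\det D^2 u(x_0) = f(x_0) \in [\lambda, 1/\lambda]$, since the tangential block and the mixed entries of $D^2 u(x_0)$ are by then controlled.

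The main obstacle is the mixed boundary estimate: the linearized operator $L$ has no a priori uniform ellipticity near $\p\Omega$, so designing a barrier that remains effective up to the boundary is delicate and constitutes the technical core of \cite{CNS}. It is precisely in this step that the $C^3$ regularity of $\p\Omega$ and its uniform convexity enter in an essential way: the former controls $\II \in C^1$ and allows the construction of a $C^2$ barrier, while the latter guarantees the interior ball needed to make the barrier a supersolution to $L$.
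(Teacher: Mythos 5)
Your strategy is the same as the paper's (the Caffarelli--Nirenberg--Spruck scheme: $C^0$ and $C^1$ bounds by barriers, maximum-principle reduction of the interior $C^2$ bound to the boundary, then tangential/mixed/normal boundary second derivatives in that order), and most steps match the paper's sketch. Two points, however, do not close as written. First, for the final step you need more than $|\n u(x_0)|\le C$: to read $u_{\nu\nu}(x_0)$ off from $\det D^2u=f\ge\lambda$ via the cofactor expansion you must know that the tangential block $u_{\tau_i\tau_j}(x_0)=-\II_{ij}(x_0)u_\nu(x_0)$ is uniformly \emph{positive definite}, which requires the lower bound $-\p_\nu u(x_0)\ge 1/C$. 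This is exactly what the paper's two-sided barrier pair $v_\pm$ (interior and exterior tangent balls) delivers in \eqref{normal}; your preliminary step only records the upper bound.

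Second, and more seriously, the mixed estimate as you describe it fails. If $\tau$ is a \emph{fixed} direction tangent to $\p\Omega$ at $x_0$, then on $\p\Omega$ near $x_0$ one has $\n u=u_\nu\,\nu$ (tangential derivatives of the zero boundary datum vanish), hence $u_\tau=u_\nu\,(\nu\cdot\tau)=O(|x-x_0|)$: the boundary values of $u_\tau$ are only \emph{first} order in the tangential variables. No barrier of the form $Bx_n-A|x'|^2$ (which is $O(|x'|^2)$ on $\p\Omega$, since $x_n\sim\kappa|x'|^2/2$ there) can dominate a quantity that is genuinely $O(|x'|)$, so the comparison argument cannot be run for $u_\tau$ alone. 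This is precisely why the paper replaces $u_\alpha$ by the rotation-corrected combination $(1-\k_\alpha x_n)u_\alpha+\k_\alpha x_\alpha u_n$ (coming from the rotational derivative $R_{\alpha n}u$), whose boundary values \emph{are} $O(|x|^2)+O(x_n)$ by \eqref{dato} and which still satisfies $|L(\cdot)|\le C$ by the rotation invariance of the determinant. The remaining ingredient you would also need to make the barrier a supersolution without any a priori ellipticity of $L$ is the arithmetic--geometric mean inequality $\sum_i u^{ii}\ge n(\det D^2u)^{-1/n}\ge c(\lambda)>0$, as in \eqref{sub}; the uniform interior ball by itself does not supply this.
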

Notice that the uniform convexity of $\Omega$ is necessary to obtain regularity up to the boundary:
indeed, if $D^2u$ is uniformly bounded then (as we mentioned above)  \(u\) is uniformly convex  on \(\overline \Omega\) and  hence by the Implicit Function Theorem   $\partial \Omega= \{u=0\}$
is uniformly convex as well.

Theorem \ref{po2} together with an approximation procedure allows us to run the strategy described above to obtain the following existence result.
\begin{theorem}\label{existencesmooth}Let \(\Omega\) be a uniformly convex domain
of class $C^3$. Then for all \(f\in C^{2}(\overline \Omega)\) with \(\lambda \le f\le 1/\lambda\)
there exists a (unique) \(C^{2,\alpha}(\overline \Omega)\) solution to \eqref{maloc}.
\end{theorem}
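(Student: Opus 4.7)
The plan is to carry out the continuity method described just above the statement. The first step is to produce a smooth initial solution. Since $\Omega$ is uniformly convex and $C^3$, one can construct a convex function $\bar u \in C^{2,\alpha}(\overline\Omega)$ with $\bar u = 0$ on $\partial\Omega$ and uniformly positive definite Hessian (for instance, starting from a smooth defining function $\rho$ with $\rho<0$ in $\Omega$, $\rho=0$ on $\partial\Omega$, and $D^2\rho$ positive definite on $\partial\Omega$ by uniform convexity, and then replacing $\rho$ by $e^{K\rho}-1$ for $K$ large enough to make the Hessian positive on all of $\overline\Omega$). Setting $\bar f := \det D^2 \bar u$ gives data as regular as we wish with $\bar f$ bounded between two positive constants. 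Define $f_t := (1-t)\bar f + t f$ and consider the family of Dirichlet problems \eqref{MAt}; the functions $f_t$ are uniformly bounded between two positive constants and uniformly bounded in $C^2(\overline\Omega)$ in $t\in[0,1]$.

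Let $\mathcal T \subset [0,1]$ denote the set of $t$ for which \eqref{MAt} admits a convex solution $u_t \in C^{2,\alpha}(\overline\Omega)$; by construction $0 \in \mathcal T$. For openness, at $u_t \in \mathcal T$ the Fr\'echet derivative $D_u\mathcal F(u_t,t)$ is the linearization \eqref{linearized}. Because $u_t \in C^{2,\alpha}$ and $\det D^2 u_t = f_t$ is uniformly positive, the eigenvalues of $D^2 u_t$ are bounded away from $0$, so the linearization is uniformly elliptic with $C^{0,\alpha}$ coefficients. Standard Schauder theory then shows that this linearization is an isomorphism from $\{h \in C^{2,\alpha}(\overline\Omega) : h|_{\partial\Omega} = 0\}$ to $C^{0,\alpha}(\overline\Omega)$, and by the Implicit Function Theorem in Banach spaces $\mathcal T$ is relatively open (the perturbed solutions remain convex since their Hessians are close to that of $u_t$ in $C^{0,\alpha}$).

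For closedness, take $t_k \in \mathcal T$ with $t_k \to t_\infty$ and corresponding solutions $u_k$. Theorem \ref{po2} applied to $f_{t_k}$ yields a uniform bound $\|D^2 u_k\|_{C^0(\overline\Omega)} \le C$, with $C$ depending on $\Omega$, $\lambda$, and $\|f\|_{C^2(\overline\Omega)}$ but not on $k$. Combined with $\det D^2 u_k \ge \lambda' > 0$, this makes the equation uniformly elliptic along the sequence, and Evans-Krylov together with the corresponding boundary $C^{2,\alpha}$ estimates upgrade the bound to $\|u_k\|_{C^{2,\alpha}(\overline\Omega)} \le C'$. Ascoli-Arzel\`a extracts a subsequence converging in $C^2(\overline\Omega)$ to a convex $u_{t_\infty} \in C^{2,\alpha}(\overline\Omega)$ solving \eqref{MAt} at $t_\infty$, so $\mathcal T$ is closed. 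By connectedness of $[0,1]$, $\mathcal T = [0,1]$; taking $t=1$ produces the desired solution, and uniqueness follows from the comparison principle (Lemma \ref{ch2:lem:comparison}).

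The main obstacle is genuinely the a priori estimate of Theorem \ref{po2}: once this boundary $C^2$ bound is in hand, the rest of the argument is a clean package combining Banach-space functional analysis with interior/boundary Schauder estimates and Evans-Krylov. A minor secondary point is the ``approximation procedure'' mentioned just before the statement: if one prefers to avoid a delicate direct construction of $\bar u$, or to smooth the data before invoking the openness/closedness dichotomy, one can approximate $f$ by $C^\infty$ functions, run the continuity argument to obtain smooth solutions, and pass to the limit using the stability result (Theorem \ref{thm:stability}) together with the uniform a priori estimates.
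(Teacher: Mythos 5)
Your proposal is correct and follows essentially the same route as the paper: the continuity method, with openness via the Implicit Function Theorem and Schauder theory, and closedness via the a priori estimate of Theorem \ref{po2} combined with Evans--Krylov and Ascoli--Arzel\`a, plus uniqueness from the comparison principle. The one point the paper leaves implicit --- the ``approximation procedure'' needed to work with sufficiently smooth data (note that with $\Omega$ only $C^3$ your $\bar f=\det D^2\bar u$ is a priori only $C^1$, and applying Theorem \ref{po2} requires differentiating the equation twice) --- is exactly the one you flag and resolve at the end, so nothing essential is missing.
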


The proof of Theorem \ref{po2} is classical. However, since the ideas involved are at the basis of many other theorems in elliptic regularity,
we give a sketch of the proof for the interested readers.

\begin{proof}[Sketch of the proof of Theorem \ref{po2}]\mbox{\\ }
We begin by noticing that, because the linearized operator in \eqref{linearized} is degenerate elliptic (in the sense that,
since we do not know yet that the eigenvalues of $D^2 u$
are bounded away from zero and infinity,
we cannot use any quantity involving its ellipticity constants), the maximum principle is essentially the only tool  at our disposal.\\

\noindent
\emph{Step 1: \(C^0\) and \(C^1\)  estimates}. \(C^0\) estimates can be obtained by a simple barrier construction.
Indeed, it suffices to use Lemma \ref{ch2:lem:comparison}
with $v(x):=\lambda^{-1/n}\bigl(|x-x_1|^2-R^2\bigr)$
(where $x_1$ and $R$ are chosen so that
\( \Omega\subset B_{R}(x_1)\))
to obtain a uniform lower bound on $u$.

To estimate the gradient we note that, by convexity,
\[
\sup_{\Omega}|\nabla u|=\sup_{\partial \Omega}|\nabla u|,
\]
so we need to estimate it only on the boundary. Since \(u=0\) on \(\partial \Omega\), any tangential derivative is zero, hence we only have to estimate the normal derivative. This can be done again with a simple barrier argument as above choosing, for any point $x_0 \in \partial \Omega$,
$$
v_\pm(x):=\lambda^{\mp 1/n}\bigl(|x-x_\pm|^2-R_\pm^2\bigr)
$$
where 
$$
x_\pm:=x_0+R_\pm\nu_{x_0}
$$
and $0<R_-<R_+<\infty$ are chosen so that
$$
B_{R_-}(x_-)\subset \Omega\subset B_{R_+}(x_+).
$$
In this way we get $v_+ \leq u \leq v_-$, therefore 
\begin{equation}\label{normal}
-C\le \partial_\nu u(x_0)\le -\frac{1}{C}.
\end{equation}\\

\noindent
\emph{Step 2: \(C^2\) estimates}. This is the most delicate step.  Given a unit vector \(e\) we differentiate  the equation
\begin{equation}\label{log}
\log \det D^2u=\log f
\end{equation}
once and two times in the direction of $e$ to get, respectively,
\begin{equation}\label{lin1}
L(u_e):=u^{ij}  (u_e)_{ij}=(\log f)_{e}
\end{equation}
and
\begin{equation}\label{lin2}
u^{ij} (u_{ee})_{ij}-u^{il}u^{kj}(u_e)_{ij}( u_e)_{lk}=(\log f)_{ee}
\end{equation}
(recall that $u^{ij}$ denotes the inverse of $u_{ij}$ and that lower indices 
denotes partial derivatives).
 By the convexity of \(u\), \(u^{il}u^{kj}(u_e)_{ij}( u_e)_{lk} \ge0\), hence
\begin{equation*}
L(u_{ee})\ge (\log f)_{ee}\ge -C,
\end{equation*}
for some constant \(C\) depending only on \(f\). Since \(L(u)=u^{ij} u_{ij} =n\), we see that
\[
L(u_{ee}+Mu)\ge 0
\]
for a suitable large constant \(M\) depending on \(f\). Hence, by the maximum principle,
\[
\sup_{\Omega} (u_{ee}+Mu)\le \sup_{\partial \Omega} (u_{ee}+Mu).
\]
Since \(u\) is bounded, to get an estimate  \(D^2u\) we only have to estimate it on the boundary.  Let us assume that \(0\in \partial \Omega\) and that locally 
\begin{equation}\label{bordo}
\partial \Omega=\Big\{(x_1,\dots ,x_n)\,:\  x_n=\sum_{\alpha=1}^{n-1}\frac{\kappa_{\alpha}}{2} x_{\alpha}^2+O(|x|^3)\Big\}
\end{equation}
for some constants $\kappa_\a >0$.
Notice that, by the smoothness and uniform convexity of $\Omega$, we have
\(1/C \le \kappa_\alpha\le C\).
In this way
\[
 u_{\alpha\alpha}(0)=-\kappa_{\alpha}u_{n}(0),\qquad u_{\alpha\beta}(0)=0 \quad\forall\, \alpha\ne\beta\in\{1,\dots,n-1\}.
\]
Thanks to \eqref{normal} this gives 
\begin{equation}\label{tangential}
 \Id_{n-1}/C\le \bigl( u_{\alpha\beta}(0) \bigr)_{\alpha,\beta\in\{1,\dots,n-1\}}\le C\,\Id_{n-1}.  
 \end{equation}
 Noticing that  \[
 f=\det D^2 u=M^{nn}(D^2u) u_{nn}+\sum_{\alpha=1}^{n-1} M^{\alpha n}(D^2u)u_{\alpha n} 
 \]
 with \(M^{ij}(D^2 u)\) the cofactor of \(u_{ij} \),
 this identity and \eqref{tangential} will give an upper bound on \(u_{nn}(0)\) once one has an
 upper bound on the mixed derivative  \(u_{\alpha n}(0)\) for $\a \in \{1,\ldots,n-1\}$. Hence, to conclude, we only have to  provide an upper bound on \(u_{\alpha n}(0)\).
 
 For this, let us  consider the ``rotational'' derivative operator
 \[
 R_{\alpha n}=x_\alpha \partial_{n}-x_n\partial_\alpha \qquad \alpha \in \{1,\dots,n-1\}.
 \]
 By the invariance of the determinant with respect to rotations, differentiating \eqref{log}
 we get
 \[
 L(R_{\alpha n} u)=u^{ij}(R_{\alpha n} u)_{ij}= R_{\alpha n} (\log f),
 \]
 hence, multiplying the above equation by $\k_\a$ 
 and using \eqref{lin1}, we get (recall that $\k_\a$ is a constant)
  \begin{equation}\label{derivozzo}
 \Big|L\big((1- \k_{\alpha} x_n) u_\alpha+\k_{\alpha}x_\alpha u_n\big)\Big|\le C.
 \end{equation}
 Since \(u=0\) on \(\partial \Omega\), thanks to \eqref{bordo},  the uniform convexity of \(\Omega\),
 and the bound on \(|\nabla u|\), one easily computes that
 \begin{equation}\label{dato}
 \big|(1- \k_{\alpha} x_n) u_\alpha+\k_{\alpha}x_\alpha u_n\big|\le -A|x|^2+Bx_n\qquad \text{on \(\partial \Omega\)}
 \end{equation}
 for a suitable choice of constants \(B\gg A\gg1\) depending only on \(\Omega\). Moreover,
 \begin{equation}\label{sub}
 L\big( -A|x|^2+Bx_n\big)=-A\sum_{i} u^{ii}\le -\frac{nA}{(\det D^2 u)^{1/n}}\le -\frac{nA}{\lambda^{1/n}},
 \end{equation}
 where we have used the arithmetic-geometric mean inequality. Hence,
 choosing \(A\) large enough, \eqref{derivozzo}, \eqref{dato}, and \eqref{sub}, together with the comparison principle, imply
  \[
 \big|(1- \k_{\alpha} x_n) u_\alpha+\k_{\alpha}x_\alpha u_n\big|\le -A|x|^2+Bx_n\qquad \text {in \(\Omega\)}.
 \]
 Dividing by \(x_n\) and letting \(x_n\to 0\) we finally get 
   \begin{equation}\label{mixed}
  | u_{\alpha n}(0)|\le C
  \end{equation}
 for a constant \(C\) depending only on \(\Omega\) and \(f\), as desired.
 \end{proof}

\medskip    \subsection{Interior estimates and regularity of weak solutions.}

In the previous sections we have shown existence and uniqueness of weak solutions in general convex domains,
and the existence of smooth solutions in smooth uniformly convex domains.
To obtain smoothness of weak solutions we need the following interior estimate, due to Pogorelov.

\begin{theorem}[Pogorelov's interior estimate]\label{po1} Let \(u \in C^4(\Omega)\)  be a solution of \eqref{maloc} with
\(f\in C^2(\Omega)\)  and \(\lambda \le f\le1/\lambda\). Then there exist a constant \(C\), depending only on  \(\lambda\) and \(\|f\|_{C^2}\), such that
\begin{equation}\label{posticazzi}
|u(x)| u_{11}(x)e^{\frac{(u_1(x))^2}{2}}\le C\Bigl\|e^{u_1^2/2}(1+|u_1|+|u|) \Bigr\|_{L^\infty(\Omega)}\qquad \text{for all \(x\in \Omega\).}
\end{equation}
\end{theorem}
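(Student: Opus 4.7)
The plan is to apply the classical Pogorelov auxiliary-function argument. Since $u$ solves \eqref{maloc} with $f\ge\lambda>0$ and the Dirichlet condition $u=0$ on $\partial\Omega$, the comparison principle (Lemma~\ref{ch2:lem:comparison}) gives $u<0$ in $\Omega$. I would therefore consider
\[
\Psi(x):=\log u_{11}(x)+\tfrac{1}{2}u_1(x)^2+\log(-u(x)),
\]
which tends to $-\infty$ as $x\to\partial\Omega$, so $\Psi$ attains its maximum at an interior point $x_0\in\Omega$. Bounding $\Psi(x_0)$ in terms of the right-hand side of \eqref{posticazzi} will suffice, because then $(-u)u_{11}e^{u_1^2/2}(x)\le e^{\Psi(x_0)}$ for every $x\in\Omega$.

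At $x_0$, $\nabla\Psi(x_0)=0$ and $D^2\Psi(x_0)\le 0$, so the linearised Monge-Amp\`ere operator $L:=u^{ij}\partial_{ij}$, positive semidefinite by convexity of $u$, satisfies $L\Psi(x_0)\le 0$. The next step is to collect the three key identities obtained by differentiating $\log\det D^2 u=\log f$ zero, one, and two times in the direction $e_1$:
\[
L(u)=n,\qquad L(u_1)=(\log f)_1,\qquad L(u_{11})=(\log f)_{11}+u^{ik}u^{jl}u_{1kl}u_{1ij},
\]
the last (non-negative) term being the ``good'' third-derivative quadratic form that this specific auxiliary function is engineered to exploit. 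Writing out $L\Psi$ via the logarithmic chain rule and substituting the critical-point relation $(u_{11})_i/u_{11}=-u_i/u-u_1 u_{1i}$ lets the $u^{ij}u_iu_j/u^2$ contributions partially cancel, while the indefinite cross-term $2u_1 u^{ij}u_iu_{1j}/u$ can be absorbed into the surviving positive squared quantities via the weighted Cauchy--Schwarz
\[
\Bigl|\tfrac{2u_1}{u}\,u^{ij}u_i u_{1j}\Bigr|\le \tfrac{u^{ij}u_iu_j}{u^2}+u_1^2\,u^{ij}u_{1i}u_{1j},
\]
valid by positive-definiteness of $(u^{ij})$ and AM--GM.

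The main obstacle is to extract from what remains a coercive estimate on $u_{11}(x_0)$. The standard route is to rotate the $e_2,\dots,e_n$ coordinates at $x_0$ so that $u_{\alpha\beta}(x_0)$ becomes diagonal for $\alpha,\beta\in\{2,\dots,n\}$ (a rotation that leaves $u_1$ and $u_{11}$ untouched); one then isolates from $u^{ik}u^{jl}u_{1kl}u_{1ij}/u_{11}$ the diagonal contributions and uses the cofactor formula together with $\det D^2 u=f\in[\lambda,1/\lambda]$ to bound $u^{ii}$ from below. After multiplying through by $(-u)u_{11}>0$ and crudely estimating $|u_1(\log f)_1|\le C(1+|u_1|)$, the inequality $L\Psi(x_0)\le 0$ collapses to
\[
(-u(x_0))\,u_{11}(x_0)\le C\bigl(1+|u_1(x_0)|+|u(x_0)|\bigr),
\]
with $C$ depending only on $\lambda$ and $\|f\|_{C^2}$. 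Multiplying both sides by $e^{u_1(x_0)^2/2}$ and invoking $\Psi(x)\le \Psi(x_0)$ for every $x\in\Omega$ yields~\eqref{posticazzi}. The delicate part throughout is the precise book-keeping of signs: the exponential weight $e^{u_1^2/2}$ in Pogorelov's auxiliary function is chosen precisely so that the coefficients produced by the chain rule combine with the good third-derivative term to give a coercive estimate rather than cancelling against it.
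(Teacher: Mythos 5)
Your proposal follows essentially the same route as the paper's proof: the auxiliary function $\Psi=\log\bigl((-u)u_{11}e^{u_1^2/2}\bigr)$, the maximum principle applied through $u^{ij}\partial_{ij}\Psi\le 0$ at an interior maximum, the once- and twice-differentiated equation, and the cancellation of the bad terms via the critical-point identity, ending with a coercive inequality for $(-u)u_{11}$ at $x_0$. The only caveat is your diagonalization step: a rotation of $e_2,\dots,e_n$ fixing $e_1$ cannot in general remove the mixed entries $u_{1\alpha}(x_0)$, whereas the paper uses a unimodular linear change of variables (with $\det A=1$, preserving the $x_1$-direction and hence $u_1$ and $u_{11}$) that makes the \emph{full} Hessian diagonal at $x_0$ — which is what the estimate $u^{ij}a_ia_j\ge u^{11}a_1^2$ in the key cancellation actually uses.
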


\begin{proof}[Sketch of the proof] Observing that (by convexity) $u \leq 0$ in $\Omega$,
let us define 
\[
w=(-u)u_{11}e^{\frac{(u_1)^2}{2}}\qquad x\in \Omega.
\]
Let \(x_0\) a maximum point of \(w\) in \(\overline \Omega\) and notice that \(x_0\in \Omega\) thanks to the boundary condition \(u=0\). 

First we make a change of coordinate \(x'=Ax\) with \(\det A=1\)  which leaves the direction \(x_1\) invariant and such that \(u_{ij}\) is diagonal at \(x_0\) (see for instance \cite[Chapter 4]{G}). 
Then we compute
\[
\begin{aligned}
(\log w)_{i}&=\frac{u_i}{u}+\frac{u_{11i}}{u_{11}}+u_{1}u_{1i}, \\
(\log w)_{ij}&=\frac{u_{ij}}{u}-\frac{u_i u_j}{u^2}+\frac{u_{11ij}}{u_{11}}-\frac{u_{11i}u_{11j}}{(u_{11})^2}+u_{1}u_{1ij}+u_{1i}u_{1j}.
\end{aligned}
\]
Since \(x_0\) is a maximum point for \(\log w\),  we have 
\begin{equation}\label{grad}
0=(\log w)_{i}=\frac{u_i}{u}+\frac{u_{11i}}{u_{11}}+u_{1}u_{1i},
\end{equation}
and
\begin{equation*}\label{contazzo}
\begin{aligned}
0\ge u^{ij} (\log w)_{ij}&=\frac{u^{ij}u_{ij}}{u}-\frac{u^{ij}u_i u_j}{u^2}+\frac{u^{ij}u_{11ij}}{u_{11}}-\frac{u^{ij}u_{11i}u_{11j}}{(u_{11})^2}+u_{1}u^{ij}u_{1ij}+u^{ij}u_{1i}u_{1j}\\
&=\frac{L(u)}{u}-\frac{u^{ij}u_i u_j}{u^2}+\frac{L(u_{11})}{u_{11}}-\frac{u^{ij}u_{11i}u_{11j}}{(u_{11})^2}+u_1 L(u_1)+u^{ij}u_{1i}u_{1j},
\end{aligned}
\end{equation*}
where again \(L(h)=u^{ij}h_{ij}\) and all functions are evaluated at \(x_0\). Using \eqref{lin1} and \eqref{lin2} with \(e=e_1\) and  that \(L(u)=n\) we get
\begin{equation}\label{contazzo2}
0\ge \frac{n}{u}-\frac{u^{ij}u_i u_j}{u^2}+\frac{(\log f)_{11}}{u_{11}}+\frac{u^{il}u^{kj}u_{1ij} u_{1kl}}{u_{11}}-\frac{u^{ij}u_{11i}u_{11j}}{(u_{11})^2}+u_1 (\log f)_{1}+u^{ij}u_{1i}u_{1j}.
\end{equation}
Now, recalling that \(u_{ij}\) and \(u^{ij}\) are  diagonal at \(x_0\), thanks to \eqref{grad} we obtain 
\[
\begin{split}
\frac{u^{il}u^{kj}u_{1ij} u_{1kl}}{u_{11}}&-\frac{u^{ij}u_{11i}u_{11j}}{(u_{11})^2}-\frac{u^{ij}u_i u_j}{u^2}\\
&=\frac{u^{il}u^{kj}u_{1ij} u_{1kl}}{u_{11}}-\frac{u^{ij}u_{11i}u_{11j}}{(u_{11})^2}-u^{ij}\Big(\frac{u_{11i}}{u_{11}}+u_{1}u_{1i}\Big)\Big(\frac{u_{11j}}{u_{11}}+u_{1}u_{1j}\Big)\\
&\ge- u^{11}\Big(\frac{u_{111}}{u_{11}}+u_{1}u_{11}\Big)^2=-\frac{(u_1)^2}{u^2 u_{11}}\, .
\end{split}
\]
Plugging the above equation in \eqref{contazzo2} and taking again into account that \(u_{ij}\) is diagonal, we get
\[
0\ge \frac{n}{u}+\frac{(\log f)_{11}}{u_{11}}+u_1 (\log f)_{1}-\frac{(u_1)^2}{u^2 u_{11}}+u_{11}.
\]
Multiplying by \(u^2u_{11}e^{(u_1)^2}\) and recalling the definition of \(w\) we finally obtain
\[
0\ge w^2-Cwe^{\frac{(u_1)^2}{2}}(1+|u_1|u)-Ce^{(u_1)^2}(u^2+(u_1)^2)
\]
for a suitable constant \(C\)  depending only on \(f\),
proving \eqref{posticazzi}.
\end{proof}

Combining the theorem above with the stability of weak solutions and the existence of smooth solutions,
we obtain the following result:

\begin{theorem}\label{thm:pogsmooth} Let $u:\Omega  \to \R$ be a convex  Alexandrov solution of \eqref{maale}
with \(f\in C^2(\Omega)\) and \(\lambda \le f\le1/\lambda\). 
Assume that $u$ is strictly convex inside $\Omega'\subset \Omega$.
Then $u \in C^2(\Omega')$.
\end{theorem}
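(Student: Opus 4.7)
The plan is to localize the problem on a suitable sub-level set (a ``section''), approximate by smooth problems in the sense of Theorem \ref{existencesmooth}, apply Pogorelov's estimate Theorem \ref{po1} to the approximating solutions uniformly on compact sets, and finally pass to the limit via the stability result Theorem \ref{thm:stability}.

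\emph{Step 1: localization via a section.} Fix $x_0\in\Omega'$ and let $\ell$ be a supporting affine function to $u$ at $x_0$. Since $u$ is strictly convex inside $\Omega'$, there exists $h>0$ such that the section
\[
S:=\{x\in\Omega: u(x)<\ell(x)+h\}
\]
is a bounded open convex set compactly contained in $\Omega$ and containing $x_0$ in its interior. Replacing $u$ by $w:=u-\ell-h$, we may assume that $w$ is a convex Alexandrov solution of $\det D^2 w=f$ in $S$, with $w=0$ on $\partial S$ and $w<0$ in $S$. Since $\ell$ is affine this change does not affect Monge-Amp\`ere measures or second derivatives.

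\emph{Step 2: smooth approximation.} Approximate $S$ from the inside by smooth, uniformly convex domains $S_k$ (with $C^3$ boundary) exhausting $S$, and approximate $f$ by functions $f_k\in C^2(\overline{S_k})$ with $\lambda\le f_k\le 1/\lambda$ and $f_k\to f$ in $L^1$. By Theorem \ref{existencesmooth} there is a unique solution $w_k\in C^{2,\alpha}(\overline{S_k})$ of $\det D^2 w_k=f_k$ in $S_k$ with $w_k=0$ on $\partial S_k$; Schauder bootstrap using $C^2$ data upgrades $w_k$ to $C^4$, which is what Theorem \ref{po1} requires. By Theorem \ref{thm:stability}, $w_k$ converge locally uniformly in $S$ to the Alexandrov solution with the same data on $\partial S$, which by Lemma \ref{ch2:lem:comparison} must equal $w$.

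\emph{Step 3: uniform $C^2$ estimates on compact sets.} Fix a compact set $K\Subset S$. Since $w<0$ on $K$ and $w_k\to w$ uniformly, we have $|w_k|\ge c_K>0$ on $K$ for $k$ large. By convexity and the uniform $L^\infty$ bound on $w_k$ (the latter following from Alexandrov's maximum principle, Theorem \ref{ch2:tim:alemax}), $|\nabla w_k|$ is bounded by a constant $M$ on any slightly larger compact set $K'\Supset K$. Applying Pogorelov's estimate Theorem \ref{po1} to $w_k$ on $S_k$ in an arbitrary direction $e$, evaluated at $x\in K$, yields
\[
|w_k(x)|\,\partial_{ee} w_k(x)\,e^{(\partial_e w_k(x))^2/2}\le C\bigl\|e^{(\partial_e w_k)^2/2}(1+|\partial_e w_k|+|w_k|)\bigr\|_{L^\infty(S_k)},
\]
whose right-hand side is bounded in terms of $M$ and $\|w_k\|_{L^\infty}$ only. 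Combined with $|w_k|\ge c_K$ on $K$, this gives a uniform upper bound on $D^2 w_k$ on $K$. Since $\det D^2 w_k=f_k\ge\lambda$, the eigenvalues of $D^2 w_k$ are also bounded below away from zero on $K$, so the linearized equation is uniformly elliptic with $C^{0,\alpha}$ coefficients.

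\emph{Step 4: passing to the limit.} Uniform ellipticity together with $f_k\in C^{0,\alpha}$ allows Evans-Krylov (applied to $\log\det D^2w_k=\log f_k$) to yield uniform $C^{2,\alpha}$ bounds on $w_k$ on $K$. By Ascoli-Arzel\`a we extract a subsequence converging in $C^2$ on $K$; by uniqueness of the limit, the whole sequence converges to $w$ in $C^2(K)$. Hence $w\in C^2$ in a neighborhood of $x_0$, and since $x_0\in\Omega'$ was arbitrary we conclude $u\in C^2(\Omega')$.

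The main obstacle is Step 3: one must guarantee that Pogorelov's one-sided bound on the pure second derivatives is uniform in $k$ on interior compacta, which is exactly what the strict convexity of $u$ (forcing $S\Subset\Omega$ and $w<0$ on $K$) buys us. Without strict convexity, sections might fail to be compactly contained in $\Omega$, and the lower bound $|w_k|\ge c_K$ on $K$, which is what unlocks the Pogorelov estimate, would collapse.
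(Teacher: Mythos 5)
Your proof follows essentially the same route as the paper's: localize on a section $S\Subset\Omega$, approximate by smooth Dirichlet problems on uniformly convex domains via Theorem \ref{existencesmooth}, apply Pogorelov's estimate (Theorem \ref{po1}) uniformly on interior compacta using the lower bound $|w_k|\ge c_K$ supplied by strict convexity, and pass to the limit with Evans--Krylov and the stability/uniqueness of Alexandrov solutions. The only imprecision is in Step 3: the right-hand side of \eqref{posticazzi} is an $L^\infty$ norm over all of $S_k$, so one needs a \emph{global} gradient bound on $w_k$ (obtained from the boundary barriers on the uniformly convex $S_k$ as in Step 1 of Theorem \ref{po2}), not merely the interior bound on $K'$ you invoke — but this is at the same level of detail as the paper's own sketch.
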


\begin{proof}[Sketch of the proof]
Fix $x_0 \in \Omega'$, $p \in \partial u(x_0)$, and consider the \emph{section} of \(u\) at height \(t\) defined as
\begin{equation}\label{sec}
S(x,p,t):=\big\{y\in \Omega\,:\, u(y)\le u(x)+p\cdot(y-x)+t\big\}.
\end{equation}
Since \(u\) is strictly convex we can choose $t>0$ small enough so that
$S(x_0,p,t)\Subset \Omega'$.
Then we consider $S_\e$ a sequence of smooth uniformly convex sets converging to $S(x_0,p,t)$
and apply Theorem \ref{existencesmooth} to find a function $v_\epsilon \in C^{2,\alpha}(S_\e)$
solving
$$\begin{cases}
\det D^2  v_\epsilon= f\ast\rho_\epsilon \quad&\text{in \(S_\e\)}\\
 v_\epsilon =0 &\text{on \(\p S_\e\)}.
 \end{cases}
$$
By Schauder's theory $v_\epsilon$ are of class $C^\infty$
inside $S_\e$, so we can apply Theorem \ref{po1}
to deduce that 
$$
|D^2v_\e| \leq C \qquad \text{in $S(x_0,p,t/2)$}
$$
for $\e$ sufficiently small.
Since $S_\e \to S(x_0,p,t)$ and $u(x)=u(x_0)+p\cdot x+t$ on $\partial S(x_0,p,t)$,
by uniqueness of weak solutions we deduce that $v_\e+u(x_0)+p\cdot x+t \to u$ uniformly as
 $\e \to 0$, hence 
 $|D^2u| \leq C$ in $S(x_0,p,t/2)$. 
This makes the equation uniformly elliptic, so $u \in C^{2}(S(x_0,p,t/4))$.
By the arbitrariness of $x_0$ we obtain that $u \in C^2(\Omega')$, as desired.
\end{proof}

\medskip    \subsection{Further regularity results for weak solutions}
\label{sect:Caff Alex}

In the 90's
Caffarelli developed a regularity theory for Alexandrov solutions, showing that strictly convex solutions of \eqref{eq:MAclassical}
are locally $C^{1,\alpha}$ provided $0<\lambda \leq f\leq 1/\lambda$
for some $\lambda \in \mathbb{R}$ \cite{Caf1,Caf2,Caf3}. 

\begin{theorem}[Caffarelli]\label{thm:calfa}Let \(u:\Omega \to \R\) be a {strictly convex} solution of \eqref{maale} with \(\lambda \le f\le 1/\lambda\). Then \(u\in C_{\rm loc}^{1,\alpha}(\Omega)\) for some universal \(\alpha\). More precisely for every \(\Omega'\Subset \Omega\) there exists a constant  \(C\), depending on \(\lambda\), \(\Omega'\), and the modulus of strict convexity of \(u\), such that
\[
\sup_{\substack{ x,y \in \Omega'\\ x\ne y}}\frac{|\nabla u(x)-\nabla u(y)|}{|x-y|^{\alpha}}\le C.
\]
\end{theorem}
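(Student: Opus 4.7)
The strategy, due to Caffarelli, rests on the affine invariance of the Monge-Amp\`ere equation and on the systematic use of sections $S(x_{0},p_{0},t)$ (cf.\ \eqref{sec}) in place of Euclidean balls. The target Hölder estimate is equivalent to a polynomial decay rate for the diameter of sections as $t\to 0$, so the proof reduces to establishing a quantitative contraction along a dyadic sequence of sections, uniformly in $x_{0}\in\Omega'$ and in the choice of subgradient $p_{0}\in\partial u(x_{0})$.

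The first step is a \emph{normalization via John's lemma}. Since $u$ is strictly convex in $\Omega'$, every section $S:=S(x_{0},p_{0},t)$ with $x_{0}\in\Omega'$ and $t$ small is compactly contained in $\Omega$. Alexandrov's maximum principle (Theorem \ref{ch2:tim:alemax}) combined with the bound $\lambda\le f\le 1/\lambda$ yields the volume control $|S|\asymp t^{n/2}$. By John's lemma there is an affine map $A$ with $|\det A|=t^{-n/2}$ and $B_{1}\subset A(S-z)\subset B_{n}$ for an appropriate center $z$; the rescaled function
\[
v(y):=\tfrac{1}{t}\bigl[u(A^{-1}y+z)-u(x_{0})-p_{0}\cdot(A^{-1}y+z-x_{0})-t\bigr]
\]
is then a convex Alexandrov solution of $\det D^{2}v=g$ on the normalized domain $\Omega^{*}:=A(S-z)$, with $\lambda\le g\le 1/\lambda$, $v\equiv 0$ on $\partial\Omega^{*}$, and $-C\le v\le 0$. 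At this level everything is ``universal'': the domain lies between two fixed balls, and the equation retains the same two-sided bound as the original one.

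The heart of the argument, and the step I expect to be the main obstacle, is the following \emph{geometric dichotomy}: there exist universal $\theta,\eta\in (0,1)$, depending only on $n$ and $\lambda$, such that $A(S(x_{0},p_{0},\theta t)-z)\subset(1-\eta)\,\Omega^{*}$. I would prove it by a compactness--contradiction argument: if it failed, a sequence of normalized solutions $v_{k}$ on domains $\Omega^{*}_{k}$ would exist with a point of the sub-section $\{v_{k}\le -(1-\theta)\}$ approaching $\partial\Omega^{*}_{k}$. By the stability of Alexandrov solutions (Theorem \ref{thm:stability}), one extracts a limit $v_{\infty}$ on a normalized convex domain $\Omega^{*}_{\infty}$, still with $g_{\infty}\in[\lambda,1/\lambda]$, whose sub-section at positive height touches $\partial\Omega^{*}_{\infty}$. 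A standard convexity argument then forces $v_{\infty}$ to coincide with an affine function along a boundary segment; transporting this configuration back through the inverse affine maps $A_{k}^{-1}$, and exploiting $\Omega'\Subset\Omega$ together with the explicit normalization $|\det A_k|=t_k^{-n/2}$ to rule out degenerations beyond control, produces a line segment in the graph of $u$ above $\Omega'$, contradicting the strict convexity hypothesis.

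Once the dichotomy is established, iterating it along the sequence $S(x_{0},p_{0},\theta^{k}t_{0})$ yields
\[
\diam S(x_{0},p_{0},\theta^{k}t_{0})\le C(1-\eta)^{k},
\]
a geometric decay of the diameters of sections. By a standard argument based on the definition \eqref{sec} of sections (and on $u$ being already locally Lipschitz by convexity), this translates into a pointwise $C^{1,\alpha}$ estimate: $\partial u(x)$ is a singleton at each $x\in\Omega'$ and $|\nabla u(x)-\nabla u(y)|\le C|x-y|^{\alpha}$ on $\Omega'$, with $\alpha=\alpha(n,\lambda)>0$ and $C$ depending on $n,\lambda,\Omega'$, and on the modulus of strict convexity of $u$ in $\Omega'$.
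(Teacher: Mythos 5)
Your overall architecture --- sections, John normalization, a universal one-step improvement, iteration --- is the right one and matches the paper's. The problem is that your key quantitative step points in the wrong direction. The dichotomy $A(S(x_{0},p_{0},\theta t)-z)\subset(1-\eta)\,\Omega^{*}$ is an \emph{upper} bound on the inner section (incidentally, it needs no compactness at all: in the normalized picture the inner section is $\{v\le-(1-\theta)\}$, and Alexandrov's maximum principle, Theorem \ref{ch2:tim:alemax}, already forces this set to lie at universal distance from $\partial\Omega^{*}$, whence the containment). Iterating it does give $\diam S(x_{0},p_{0},\theta^{k}t_{0})\le C(1-\eta)^{k}$, but unwinding the definition \eqref{sec} this decay is equivalent to the \emph{lower} bound $u(y)-u(x_{0})-p_{0}\cdot(y-x_{0})\ge c\,|y-x_{0}|^{1+\beta}$: it quantifies the strict convexity of $u$ (equivalently, it is a H\"older estimate for $\nabla u^{*}$), not the regularity of $u$. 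The pointwise estimate characterizing $C^{1,\alpha}$, which is what the theorem and the paper's proof aim at, is the opposite one, $u(y)-u(x_{0})-p\cdot(y-x_{0})\le C|y-x_{0}|^{1+\alpha}$ for all $p\in\partial u(x_{0})$ (see \cite[Lemma 3.1]{DFconvex}); this amounts to a \emph{lower} bound on the inradius of $S(x_{0},p_{0},t)$ around $x_{0}$. An upper bound on $\diam S$ cannot be converted into such a lower bound: combined with $|S|\asymp t^{n/2}$ it only bounds the smallest axis of the John ellipsoid from below by $c\,t^{n/2-(n-1)\gamma}$, where $\gamma=\log(1-\eta)/\log\theta$ is the tiny exponent your iteration produces, and for $n\ge3$ this is far worse than the needed $t^{1/(1+\alpha)}$.

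The missing ingredient is exactly the content of Lemma \ref{lemma hk}: after renormalization the solution is \emph{flatter} than $(1-\delta_{0})$ times the cone over the full domain, which upon iteration yields $h_{2^{-k}}\le(1-\delta_{0})^{k}h_{1}$ and hence the upper bound $u(y)-u(x_{0})\le C|y-x_{0}|^{1+\alpha}$ at the vertex of the section. That lemma is where compactness is genuinely needed, and it must be run within the full class of normalized solutions, whose universal modulus of strict convexity comes from Proposition \ref{ch2:thm:contact} --- not along renormalizations of the fixed $u$. This points to a second flaw in your compactness step: the affine maps $A_{k}$ degenerate as $t_{k}\to0$, so a segment in the graph of the limit $v_{\infty}$ does not pull back to a segment in the graph of $u$, and the merely qualitative strict convexity of $u$ yields no contradiction in the rescaled variables.
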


To explain the ideas behind the proof of the above theorem let us point out the following simple properties of solutions to the Monge-Amp\`ere equation (which are just another manifestation of its degenerate ellipticity):
If \(A\) is a linear transformation with \(\det A=1\) and \(u\) is a solution of the Monge-Amp\`ere equation
with right hand side $f$, then \(u\circ A\) is a solution to the Monge-Amp\`ere equation with right hand side $f \circ A$. This affine invariance creates serious obstructions to obtain a local regularity theory.
Indeed, for instance, the  functions  
 \[
 u_\e(x_1,x_2)=\frac{ \e x_1^{2}}{2}+\frac{x_2^{2}}{2\e}-1
 \]
 are solutions to \(\det D^2 u_\e=1\) on \(\{u_\e \le 0\}\). Thus, unless the level set $\{u_\e=0\}$ is sufficiently ``round'', there is no hope to obtain a priori estimates on \(u\). The intuition of Caffarelli was to use the so-called John's Lemma \cite{John}:
 
\begin{lemma}
\label{lem:john} Let \(\mathcal K \subset \R^n\) be a bounded convex set with non-empty interior. Then there exists a unique ellipsoid \(E\) of maximal volume contained in \(\mathcal K\). Moreover this ellipsoid satisfies
\begin{equation}\label{ap:j}
E\subset \mathcal K\subset nE,
\end{equation}
where $nE$ denotes the dilation of $E$ by a factor $n$ with respect to its center. 
\end{lemma}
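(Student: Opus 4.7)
The plan is to establish the three assertions --- existence, uniqueness, and the inclusion $\mathcal K \subset nE$ --- in turn. For existence I would argue by compactness: parametrize ellipsoids as $E_{A,c} := c + A(\B^n)$ with $A$ a symmetric positive definite matrix and $c \in \R^n$, so that $\vol(E_{A,c}) = \omega_n \det A$. The admissible set $\mathcal{S} := \{(A,c) : E_{A,c} \subset \mathcal K\}$ is nonempty (since $\mathcal K$ has nonempty interior), closed in the natural topology, and bounded (since $\mathcal K$ is bounded), so the continuous function $(A,c) \mapsto \det A$ attains its maximum on $\mathcal S$.

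For uniqueness I would first observe that $\mathcal S$ is convex: if $(A_0,c_0), (A_1,c_1) \in \mathcal S$ and $t \in [0,1]$, then for every $u \in \B^n$ the point $(1-t)(A_0 u + c_0) + t(A_1 u + c_1) = A_t u + c_t$ lies in $\mathcal K$ by convexity of $\mathcal K$, so $(A_t,c_t) \in \mathcal S$. Combined with the strict concavity of $A \mapsto (\det A)^{1/n}$ on positive definite matrices (Minkowski's determinant inequality), any two maximizers must share the same matrix $A$. To rule out different centers, I would note that if $E_{A,c_0}$ and $E_{A,c_1}$ are both in $\mathcal K$ with $c_0 \neq c_1$, then their convex hull (a ``capsule'' contained in $\mathcal K$) accommodates an ellipsoid with the same shape $A$ but slightly dilated along the direction $c_1 - c_0$, contradicting maximality.

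For the inclusion $\mathcal K \subset nE$ I would exploit affine invariance: the John ellipsoid commutes with invertible affine maps, so after such a change of coordinates I may assume $E = \B^n$ centered at the origin. Arguing by contradiction, suppose some $p \in \mathcal K$ satisfies $|p| > n$; after a rotation take $p = (t,0,\dots,0)$ with $t > n$. Then $\mathcal K$ contains the convex hull of $\B^n \cup \{p\}$, and the main step is to exhibit an axis-aligned ellipsoid
\[
\Big\{x \in \R^n : \frac{(x_1 - d)^2}{a^2} + \frac{x_2^2 + \cdots + x_n^2}{b^2} \leq 1\Big\}
\]
contained in this convex hull with $ab^{n-1} > 1$, contradicting the maximality of $\B^n$. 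The hardest step is choosing $a,b,d$ as explicit functions of $t$ so that the candidate ellipsoid is tangent both to $\partial \B^n$ and to the supporting cone from $p$ to $\partial\B^n$, and then verifying algebraically that the product $ab^{n-1}$ exceeds $1$ precisely when $t > n$; this one-variable optimization is what pins down the tight constant $n$ in the statement.
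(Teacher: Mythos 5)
The paper itself does not prove this lemma --- it is quoted directly from \cite{John} --- so there is no in-paper argument to compare against; your outline is the standard textbook proof. The existence and uniqueness parts are essentially complete. Two small precisions there: Minkowski's inequality $\det\bigl(\tfrac{A_0+A_1}{2}\bigr)^{1/n}\ge \tfrac12\bigl((\det A_0)^{1/n}+(\det A_1)^{1/n}\bigr)$ is not strictly concave --- equality holds whenever $A_1=\lambda A_0$ --- but since two maximizers have equal determinants this still forces $A_0=A_1$, so your conclusion stands; and the ``capsule'' step does work: after normalizing $A=\Id$ and $c_{0,1}=\mp d\,e_1$, the ellipsoid with semi-axes $(1+d,1,\dots,1)$ centered at the midpoint is contained in $\{x:\dist(x,[c_0,c_1])\le 1\}$ and has volume ratio $1+d>1$.

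The genuine gap is in the third part, which is exactly where the constant $n$ comes from: you reduce everything to exhibiting $a,b,d$ with $\frac{(x_1-d)^2}{a^2}+\frac{|x'|^2}{b^2}\le 1$ contained in ${\rm Conv}(\B^n\cup\{te_1\})$ and $ab^{n-1}>1$ whenever $t>n$, but you never produce them, so as written the argument does not establish the factor $n$ (as opposed to some unspecified dimensional constant). The computation does close, and a quick way to see the numbers is perturbative: take $d=\delta$, $a=1+\delta$ (so the ellipsoid still passes through $-e_1$) and $b=1-\frac{\delta}{t-1}+O(\delta^2)$, chosen so that to first order in $\delta$ the ellipsoid stays inside the unit ball for $x_1\le 1/t$ and below the tangent cone $|x'|\le (t-x_1)/\sqrt{t^2-1}$ for $x_1\ge 1/t$; the relevant quadratic $\beta+s+(\alpha-\beta)s^2$ in $s=\cos\theta$, with $\alpha=1$ and $\beta=-1/(t-1)$, vanishes exactly at $s=-1$ and $s=1/t$ and is nonpositive in between. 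Then $ab^{n-1}=1+\delta\bigl(1-\frac{n-1}{t-1}\bigr)+O(\delta^2)$, which exceeds $1$ for small $\delta$ precisely when $t>n$. Either this expansion made rigorous, or the exact tangency computation you allude to, is needed to finish; everything else in your plan is sound.
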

In particular, if we define a convex set \(\Omega\)  to be \emph{normalized} if 
\[
B_1\subset \Omega \subset nB_{1},
\]
then Lemma \ref{lem:john} says that, for every bounded open convex set \(\Omega\), there is an affine transformation \(A\) such that \(A(\Omega)\) is normalized.  Note that if \(u\) solves
\begin{equation*}
\lambda\le \det D^2 u \le 1/\lambda\quad \text{in \(\Omega\)},\qquad u=0 \quad \text{on \(\partial \Omega\)}
\end{equation*} 
 and \(A\) normalizes \(\Omega\), then \(v:=(\det A)^{2/n}\, u\circ A^{-1}\) solves 
\begin{equation}
\label{eq:v normalized}
\lambda \le  \det D^2 v \le 1/\lambda\quad \text{in \(A(\Omega)\)},\qquad u=0 \quad \text{on \(\partial \bigl(A(\Omega)\bigr)\)}.
\end{equation}
Thanks to the above discussion and a suitable localization argument based on the strict convexity of \(u\) as in the proof of Theorem \ref{thm:pogsmooth}, Theorem \ref{thm:calfa} is a consequence of the following:

\begin{theorem}\label{calpfanorm}Let  \(\Omega\) be a normalized convex set and \(u\) be a solution of 
\eqref{maloc} with \(\lambda\le f\le 1/\lambda\). Then there exist positive constants \(\alpha=\alpha(n,\lambda)\) and \(C=C(n,\lambda)\) such that
\[
\|u\|_{C^{1,\alpha}(B_{1/2})}\le C.
\] 
\end{theorem}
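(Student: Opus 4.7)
The plan is to follow Caffarelli's original strategy, exploiting the affine invariance of the Monge-Amp\`ere equation through renormalized sections.

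First, I would establish $L^\infty$ and Lipschitz bounds. Alexandrov's maximum principle (Theorem \ref{ch2:tim:alemax}) applied with $|\partial u(\Omega)| \le |\Omega|/\lambda \le C(n,\lambda)$ yields $\|u\|_{L^\infty(\Omega)} \le C$. Convexity together with the normalization $B_1\subset\Omega\subset nB_1$ then gives a universal Lipschitz bound on $u$ in $B_{3/4}$, so the remaining issue is the oscillation of $\nabla u$.

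Second, the core step is a quantitative strict convexity estimate: there exists a universal modulus $\omega$ with $\omega(t)\to 0$ as $t\to 0^+$ such that for every $x\in B_{1/2}$ and every $p\in\partial u(x)$,
\[
\diam S(x,p,t) \le \omega(t).
\]
Caffarelli's argument proceeds by contradiction: if this failed, the stability of Alexandrov solutions (Theorem \ref{thm:stability}) would produce a limit solution whose graph contains a line segment in the interior, and then the affine invariance combined with Alexandrov's maximum principle forces the Monge-Amp\`ere measure to either vanish or blow up on a neighborhood of this segment, contradicting $\lambda\le f\le 1/\lambda$. This quantitative strict convexity is the main obstacle in the proof.

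Third, I would renormalize the sections. For $x\in B_{1/2}$ and $p\in\partial u(x)$, pick $t_0=t_0(n,\lambda)$ small enough so that $S(x,p,t)\Subset \Omega$ for all $t\le t_0$. By John's lemma (Lemma \ref{lem:john}) there is an affine map $T$ normalizing $S(x,p,t)$. The rescaled function
\[
v(y):=(\det T)^{2/n}\bigl[u(T^{-1}y)-u(x)-p\cdot(T^{-1}y-x)-t\bigr]
\]
is an Alexandrov solution of $\lambda\le\det D^2 v\le 1/\lambda$ on the normalized domain $T(S(x,p,t))$ with zero boundary data, so the setting is self-similar under this renormalization.

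Finally, combining the renormalization with Pogorelov's interior estimate (Theorem \ref{po1}) applied to smooth approximations (built via Theorem \ref{existencesmooth} and passed to the limit using Theorem \ref{thm:stability}), a compactness argument yields a geometric decay of sections: there exist universal $\theta,\mu\in(0,1)$ such that, after renormalization, $S(x,p,\theta t) \subset (1-\mu)\,T(S(x,p,t))$. Iterating this gives $\diam S(x,p,\theta^k t_0) \le C(1-\mu)^k$, and via the duality between sections of $u$ and its subdifferential this translates into $|\nabla u(x)-\nabla u(y)|\le C|x-y|^\alpha$ for some universal $\alpha=\alpha(n,\lambda)>0$, yielding the desired estimate.
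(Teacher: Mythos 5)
Your overall architecture is the same as the paper's: universal strict convexity on normalized domains via compactness plus Caffarelli's extremal-point result (Proposition \ref{ch2:thm:contact}), renormalization of sections by John's lemma, a geometric decay estimate, and iteration. Steps 1--3 are fine. The gap is in your final step, where you claim that the geometric decay of sections follows from Pogorelov's interior estimate (Theorem \ref{po1}) applied to smooth approximations. This cannot work: the constant in \eqref{posticazzi} depends on \(\|f\|_{C^2}\), and for the mollified data \(f\ast\rho_\e\) this norm blows up as \(\e\to 0\), so the second-derivative bounds you would extract are not uniform and do not pass to the limit. Worse, if Pogorelov's estimate did apply uniformly it would yield interior \(C^{1,1}\) bounds, which are false under the sole hypothesis \(\lambda\le f\le 1/\lambda\) (Wang's counterexamples show even \(W^{2,p}_{\rm loc}\) fails for large \(p\)); so no argument routed through a uniform Hessian bound can prove this theorem.

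The correct engine, and the one the paper uses, is purely a compactness statement at the \(C^0\) level: Lemma \ref{lemma hk}, which compares \(u\) near its minimum with the cones \(C_\beta\) over the sublevel sets \(\{u\le(1-\beta)\min u\}\) and asserts \(h_{1/2}\le(1-\delta_0)h_1\) for a universal \(\delta_0\). This is proved by contradiction using the compactness of normalized solutions (Proposition \ref{uni} plus compactness of normalized domains) together with the universal modulus of strict convexity from your step 2 --- no second-derivative information is needed. Renormalizing the sets \(\Omega_k=\{u\le(1-2^{-k})\min u\}\) and iterating gives \(h_{2^{-k}}\le(1-\delta_0)^k h_1\), hence \(u(y)-u(x)-p\cdot(y-x)\le C|y-x|^{1+\alpha}\) for every \(p\in\partial u(x)\), which yields the \(C^{1,\alpha}\) bound. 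If you replace your Pogorelov step by this cone-height decay (or prove your section-inclusion statement by the same compactness-plus-strict-convexity contradiction rather than via Theorem \ref{po1}), the argument closes.
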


In the proof of the above theorem, a key step consists in showing that solutions of \eqref{maloc} on normalized domains have a \emph{universal} modulus of strict convexity. A fundamental ingredient to prove this fact is the  following important result of Caffarelli \cite{Caf1}:
\begin{proposition}\label{ch2:thm:contact}Let \(u\) be a solution of
\[
\lambda \le \det D^2 u\le 1/\lambda
\]
inside a convex set \(\Omega\) and let \(\ell:\R^n \to \R\) be a linear function  supporting  \(u\) at some point \(\bar x\in \Omega\). If  the convex set
\[
W:=\{x\in \Omega\,:\, u(x)=\ell(x)\}
\]
contains more than one point, then it cannot have extremal points in \(\Omega\).
\end{proposition}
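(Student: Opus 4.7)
I would argue by contradiction. Suppose $W$ contains at least two distinct points and has an extremal point $x_0 \in \Omega$. After subtracting $\ell$ from $u$, translating so that $x_0 = 0$, and applying a volume-preserving linear change of coordinates together with a rescaling (none of which affect the two-sided bound $\lambda \le \det D^2 u \le 1/\lambda$), I may assume $\ell \equiv 0$, $x_0 = 0 \in \Omega$, $x_1 = -e_n$, $u \ge 0$ on $\Omega$, $u(0) = u(-e_n) = 0$, so that $[-e_n, 0] \subset W = \{u = 0\}$. Extremality of $0$ in $W$ immediately rules out $t e_n \in W$ for any $t > 0$, since otherwise
\[
0 \;=\; \tfrac{t}{t+1}(-e_n) \;+\; \tfrac{1}{t+1}(t\,e_n)
\]
would be a proper convex combination of two distinct points of $W$.

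The plan is to play the bounds $\det D^2 u \ge \lambda$ and $\det D^2 u \le 1/\lambda$ against each other on suitable normalized sections of $u$. For small $h > 0$ set $S_h := \{x \in \Omega : u(x) < h\}$; this is a bounded open convex set containing the segment $[-e_n, 0]$, so $\diam S_h \ge 1$ for every $h>0$. By John's lemma (Lemma \ref{lem:john}) pick a volume-preserving affine normalization $T_h$ of $S_h$ and consider the rescaled solution $v_h(y) := h^{-1}\,u(T_h^{-1}y)$; then $v_h$ satisfies a Monge-Amp\`ere equation with two-sided bounds on $\det D^2 v_h$ on the normalized domain $T_h(S_h)$, vanishes on $T_h(W \cap S_h)$, and equals $1$ on $\partial T_h(S_h)$.

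Applying Alexandrov's maximum principle (Theorem \ref{ch2:tim:alemax}) to $v_h - 1$ at an interior point $y^\ast \in T_h([-e_n,0])$ that is, by the non-extremality of interior points of the segment, uniformly interior to $T_h(S_h)$, produces a universal lower bound on $\dist(y^\ast, \partial T_h(S_h))$ independent of $h$. On the other hand, the extremality of $0$ in $W$ should force $T_h(0)$ to approach $\partial T_h(S_h)$ as $h \downarrow 0$: geometrically, near an extremal point of the contact set the sublevel sets $S_h$ are pinched, and under the volume-preserving normalization $T_h$ this pinching translates into $T_h(0)$ being pushed toward the boundary of $T_h(S_h)$. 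The incompatibility of these two statements about the normalized geometry produces the contradiction.

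The main obstacle is quantifying this last claim rigorously---namely, showing that extremality of $0$ forces $T_h(0)$ to approach $\partial T_h(S_h)$ at a definite rate as $h\downarrow 0$ while the midpoint of $[-e_n,0]$ remains uniformly interior. The cleanest route is to use the extremality of $0$ to produce supporting hyperplanes to $S_h$ passing close to $0$, then combine the comparison principle (Lemma \ref{ch2:lem:comparison}) with the two-sided bound on $\det D^2 v_h$ to control $\dist(T_h(0),\partial T_h(S_h))$ in terms of the ``thinness'' of $S_h$ near the extremal point, which tends to zero with $h$. Once this quantitative statement is in hand, the two Alexandrov estimates close the contradiction.
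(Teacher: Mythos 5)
The paper itself does not prove this proposition (it only cites Caffarelli \cite{Caf1}), so I compare your plan with the standard argument. Your plan has a genuine gap, and in fact its central mechanism is backwards. You want to derive the contradiction from the claim that extremality of $0$ forces $T_h(0)$ to approach $\partial T_h(S_h)$ as $h\downarrow 0$. This claim is false: the very Alexandrov estimate you invoke for the midpoint applies verbatim at $T_h(0)$. Indeed $v_h-1$ vanishes on $\partial T_h(S_h)$, equals $-1$ at $T_h(0)$ (since $0\in W$), and has Monge--Amp\`ere measure bounded above on the normalized domain, so Theorem \ref{ch2:tim:alemax} gives $1\le C(n,\lambda)\,\dist\bigl(T_h(0),\partial T_h(S_h)\bigr)$, i.e.\ $T_h(0)$ stays \emph{uniformly interior}, exactly like the midpoint. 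There is no asymmetry between the endpoint and the midpoint of the contact segment at the level of the plain sections $S_h=\{u<h\}$: both lie in $\{v_h=0\}$ and the equation treats them identically. A second, independent problem is that $S_h\supset W$ for every $h$, so if $W$ reaches $\partial\Omega$ the sections are never compactly contained in $\Omega$ and none of the normalization/maximum-principle machinery is available; and your proposed fix (``supporting hyperplanes to $S_h$ passing close to $0$'') is circular, since $0$ is interior to the open convex set $S_h$.

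The correct argument (Caffarelli's) uses \emph{tilted} sections to break the symmetry. Using extremality, one finds coordinates and $\delta>0$ so that the cap $W\cap\{x_n\ge-\delta\}$ is compact in $\Omega$ and small, while $W$ contains a point $z$ with $z_n=-\delta$ on the segment toward the second contact point. One then sets $u_\e:=u-\e(x_n+\delta)$ and $A_\e:=\{u_\e<0\}$; these sets shrink onto the compact cap, $u_\e=0$ on $\partial A_\e$, and $|u_\e(0)|=\e\delta$ is comparable to $|\min u_\e|$. Normalizing $A_\e$ and applying Theorem \ref{ch2:tim:alemax} together with the lower bound $\det D^2u\ge\lambda$ (which bounds $|\min|$ from below via the comparison Lemma \ref{ch2:lem:comparison}) shows that $0$ stays at uniformly positive normalized distance from $\partial A_\e$. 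This interiority is then played \emph{against} extremality, not in its favor: since $z\in\partial A_\e$ lies at distance $\ge\delta$ from $0$, the chord of $A_\e$ through $0$ in the direction of $z$ must extend a fixed length $\rho>0$ \emph{beyond} $0$ in the opposite direction; letting $\e\to0$, the fixed point $p=-\rho z/|z|$ lies in $\bigcap_\e\overline{A_\e}\subset W$, so $0$ is a proper convex combination of $z,p\in W$ --- contradicting extremality. So the Alexandrov estimates do appear, but the contradiction is extracted from the geometry of $W$ itself, not from a boundary-approach statement that the equation in fact forbids.
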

This statement says that, if a solution coincides with one of its supporting plane on more than one point (that is, it is not strictly convex) then the contact set has to cross the domain.
Hence, if the boundary conditions are such that $u$ cannot coincide with an affine function along a segment crossing $\Omega$, one deduce that $u$ must be strictly convex.
In particular by noticing that, when $\Omega$ is normalized, 
the class of solutions is compact with respect to the uniform convergence (this is a consequence of Proposition \ref{uni} and the fact that the family of normalized convex domains is a compact), a simple contradiction argument based on compactness shows the modulus of strict convexity must be universal. From this one deduces the following estimate:

\begin{lemma}
\label{lemma hk}
Let $\Omega$ be a normalized convex domain and let $v$ be a solution of \eqref{maloc} with \(\lambda\le f\le 1/\lambda\).
Let $x_0$ be a minimum point for $u$ in $\Omega$, and for any $\beta \in (0,1]$ denote by
\(C_{\beta}\subset \R^{n+1}\) the cone with vertex \((x_0,v(x_0))\) and base \(\{v=(1-\beta) \min v\}\times \{(1-\beta) \min v\}\). Then, if $h_\beta$ is the function
whose graph is given by $C_\beta$  (see Figure \ref{fig:c1alpha}), 
 there exists a universal constant \(\delta_{0} >0\) such that
\begin{equation}
\label{eq:h}
h_{1/2}\le (1-\delta_0)h_{1}. 
\end{equation}
\end{lemma}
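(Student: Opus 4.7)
The plan is to express both $h_1$ and $h_{1/2}$ explicitly along rays from $x_0$ and reduce the inequality to a one-dimensional statement that follows from the convexity of $v$ alone. Set $m := -v(x_0) > 0$. For each $y \in \{v \le -m/2\}\setminus\{x_0\}$, let $\gamma(y)$ and $y_\Omega(y)$ denote the intersections of the ray from $x_0$ through $y$ with $\{v = -m/2\}$ and with $\partial\Omega$ respectively, and set
\[
r(y) := \frac{|\gamma(y) - x_0|}{|y_\Omega(y) - x_0|},
\]
a quantity that is constant along each ray through $x_0$. Parametrizing $y = x_0 + \rho(\gamma(y) - x_0)$ with $\rho \in [0,1]$, the linear-interpolation formulas defining the cones give
\[
h_{1/2}(y) = -m\bigl(1 - \rho/2\bigr), \qquad h_1(y) = -m\bigl(1 - \rho\, r(y)\bigr).
\]

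The first key step is the convexity bound $r(y) \ge 1/2$. Indeed, the one-variable convex function $\varphi(t) := v(x_0 + t(y_\Omega(y) - x_0))$ satisfies $\varphi(0) = -m$ and $\varphi(1) = 0$, so $\varphi(t) \le -(1-t)m$ on $[0,1]$, and the value $-m/2$ is achieved only at some $t = r(y) \ge 1/2$. The second step is purely algebraic: after clearing the common factor $-m < 0$, the inequality $h_{1/2}(y) \le (1-\delta_0) h_1(y)$ rearranges to
\[
\delta_0 + \rho\bigl[(1-\delta_0)\, r(y) - 1/2\bigr] \ge 0.
\]
Since $r(y) \ge 1/2$, the bracket is bounded below by $-\delta_0/2$, and the left-hand side is at least $\delta_0(1 - \rho/2) \ge \delta_0/2 > 0$ for any $\delta_0 \in (0,1)$. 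Choosing, e.g., $\delta_0 = 1/2$ furnishes the claim with a universal constant.

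The main (minor) obstacle is sign bookkeeping: since both $h_{1/2}$ and $h_1$ are non-positive, the factor $(1-\delta_0) \in (0,1)$ makes $(1-\delta_0) h_1$ \emph{less} negative than $h_1$, so the stated inequality is the assertion that $h_{1/2}$ lies \emph{below} $(1-\delta_0) h_1$. Notably this argument uses only the convexity of $v$; neither the Monge-Amp\`ere equation, the bounds $\lambda \le f \le 1/\lambda$, nor the strict convexity afforded by Proposition \ref{ch2:thm:contact} enters the derivation of the inequality itself. Those hypotheses are of course essential for the \emph{applicability} of the lemma, as they guarantee (via Alexandrov's maximum principle and comparison with radial solutions of $\det D^2 w = \lambda$ on $B_1$) that $x_0$ sits well inside $\Omega$ with $|v(x_0)|$ and $\dist(x_0,\partial\Omega)$ bounded below universally, so that the sublevel set $\{v \le -m/2\}$ and the rays from $x_0$ are well-behaved and the cones $C_1$, $C_{1/2}$ are geometrically nondegenerate.
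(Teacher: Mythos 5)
Your parametrization along rays and the algebra are fine, but the statement you end up proving is the wrong one, and your own closing remark is the giveaway: an inequality that holds for \emph{every} convex function and \emph{every} $\delta_0\in(0,1)$ cannot be the quantitative heart of Caffarelli's $C^{1,\alpha}$ theorem. Read literally, with $h_\beta$ taking values in $[\min v,(1-\beta)\min v]\subset(-\infty,0]$, the inequality $h_{1/2}\le(1-\delta_0)h_1$ is indeed vacuous: since $(1-\delta_0)h_1\ge h_1$, it is \emph{weaker} than $h_{1/2}\le h_1$, which is exactly your trivial convexity bound $r(y)\ge 1/2$. Such a statement feeds nothing into the iteration that follows the lemma: from $h_{2^{-k}}\le(1-\delta_0)^k h_1$ in this literal sense one only learns $u(y)\le (1-\delta_0)^k h_1(y)\le 0$, and no estimate of the form $u(y)-u(x_0)\le C|y-x_0|^{1+\alpha}$ can come out, because $h_{2^{-k}}(y)-v(x_0)$ is not forced to decay at all.

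The inequality that is actually meant (and that is consistent with Figure \ref{fig:c1alpha}, ``flatter and flatter'', and with the subsequent proof of Theorem \ref{calpfanorm}) is the one for the cone heights measured from the common vertex:
\[
h_{1/2}-v(x_0)\ \le\ (1-\delta_0)\,\bigl(h_1-v(x_0)\bigr)\qquad\text{on }\{v\le \tfrac12\min v\},
\]
which in your notation reads $m\rho/2\le(1-\delta_0)\,m\rho\, r(y)$, i.e.
\[
r(y)\ \ge\ \frac{1}{2(1-\delta_0)}\ =\ \frac12+c_0,\qquad c_0=c_0(\delta_0)>0.
\]
This is a genuine \emph{quantitative improvement} over your convexity bound $r\ge 1/2$, and it is precisely here that the hypotheses you discard must enter: equality $r=1/2$ along a ray forces $v$ to be affine on the segment from $x_0$ to $\partial\Omega$, which is compatible with convexity (e.g.\ $v$ itself a cone) but is ruled out, uniformly over the compact class of normalized solutions with $\lambda\le\det D^2v\le 1/\lambda$, by the universal modulus of strict convexity coming from Proposition \ref{ch2:thm:contact} and a compactness/contradiction argument. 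That is the paper's route to \eqref{eq:h}; your argument never produces the gain $c_0$, so the key step is missing.
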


Now the proof of Theorem \ref{calpfanorm} goes as follows: 
\begin{figure}
\centering
\includegraphics[scale=0.55]{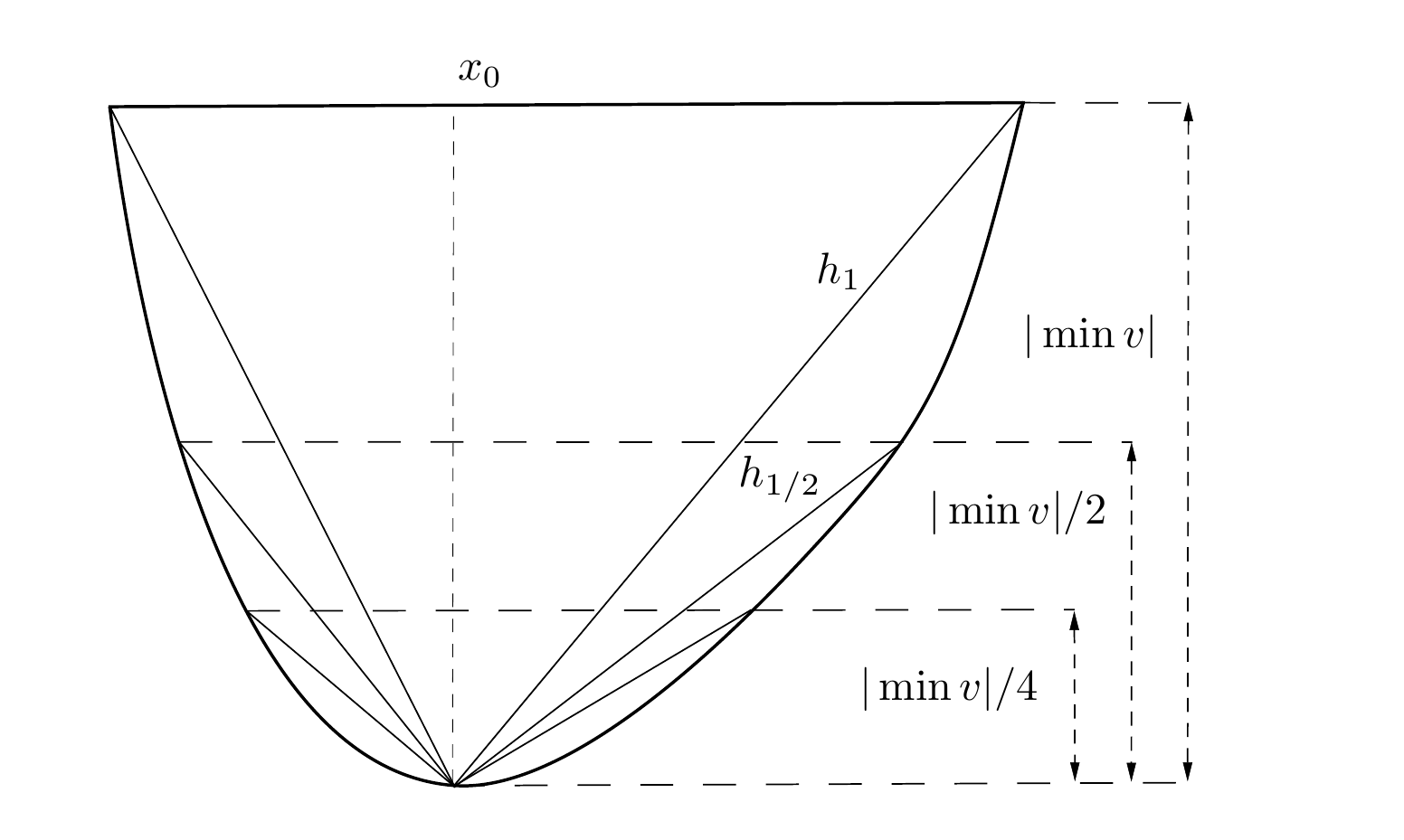}
\caption{The function \(v\) looks flatter and flatter near its minimum \(x_0\).}
\label{fig:c1alpha}
\end{figure}
for any $k \in \N$ we can consider the convex set $\Omega_k:=\{ u \leq (1-2^{-k})\min u\}$.
Then, if we renormalize $\Omega_k$ through an affine map $A_k$,  we can apply Lemma \ref{lemma hk} to the function 
$v=(\det A_k)^{2/n}u\circ A_k$ (see \eqref{eq:v normalized}) and  transfer the information
back to \(u\) to deduce that 
$h_{2^{-(k+1)}}  \le (1-\delta_0)h_{2^{-k}}$.
Therefore, by iterating this estimate we obtain 
$$
h_{2^{-k}} \leq (1-\delta_0)^kh_1\qquad \forall\,k \in \N.
$$
From this fact it follows that \(v\) is \(C^{1,\alpha}\) at the minimum,
in the sense that
$$
u(y)-u(x_0)\leq C|y-x_0|^{1+\alpha},
$$
see Figure \ref{fig:c1alpha}. 

Now, given any point $x \in \Omega'\Subset\Omega$ and $p \in \p u(x)$,
we can repeat the same argument with the function $u(y)-p\cdot (y-x)$ in place of $u$,
replacing $\Omega$ with the  section $S(x,p,t)$ (see \eqref{sec}) for some $t$ small but fixed
chosen such that 
$S(x,p,t)\Subset \Omega$.\footnote{
To be more precise, one needs to apply an affine transformation so that the section $S(x,p,t)$
becomes a normalized convex set.}
Then the above estimate gives
$$
u(y)-u(x) - p\cdot (y-x)\leq C|y-x|^{1+\alpha} \qquad \forall\,p \in \partial u(x).
$$
By the arbitrariness of $x \in \Omega'$, it is well known that the above estimate implies the desired 
$C^{1,\alpha}_{\rm loc}$ regularity of $u$ (see for instance \cite[Lemma 3.1]{DFconvex}).\\

Let us notice that a direct proof of Theorem \ref{calpfanorm} (which avoids any compactness argument) has been given by Forzani and Maldonado \cite{ForMa}, allowing one to compute the explicit dependence of \(\alpha\) on \(\lambda\).

It is important to point out that strict convexity is not just a technical assumption but it is necessary to obtain regularity. Indeed, as we already mentioned at the beginning of  Section \ref{sect:classical MA}, there are Alexandrov solutions to the Monge-Amp\`ere equation with smooth right-hand side which are not $C^2$.
For instance, the function
\begin{equation}
\label{eq:example pogorelov}
u(x_1,x'):=|x'|^{2-2/n} (1+x_1^2),\qquad n\ge 3,
\end{equation}
is $C^{1,1-2/n}$ and solves $\det D^2u=c_n(1+x_1^2)^{n-2}(1-x_1^2)>0$ inside $B_{1/2}$.
Furthermore, having only the bound \(\lambda \le \det D^2 u\le 1/\lambda\) is not even enough for $C^1$
regularity:
the function
\[
u(x_1,x'):=|x'|+|x'|^{n/2}(1+x_1^2),\qquad n\ge 3,
\]
is merely Lipschitz and solves \(\lambda \le \det D^2 u\le 1/\lambda\)
in a small convex neighborhood of the origin.

Alexandrov showed in \cite{AL2} that, in contrast with the above counterexamples, in two dimension every solution of  \(\det D^2 u\ge \lambda\) is strictly convex (see also \cite{Caf-CPDE}). Recently, in \cite{Mooney}, Mooney established that, in every dimension, the set of points where an Alexandrov solution of \(\det D^2 u\ge \lambda\) coincides with one of its supporting plane has vanishing \(\mathcal H^{n-1}\) measure. 

In the case when $f$ is  H\"older continuous, Caffarelli proved that \(u\) is locally \(C^{2,\alpha}\)
\cite{Cafw2p},
improving Theorem \ref{thm:pogsmooth}:

\begin{theorem}Let  \(\Omega\) be a normalized convex set and \(u\) be an Alexandrov solution of
\eqref{maloc} with \(\lambda\le f\le 1/\lambda\) and  \(f\in C^{0,\alpha}(\Omega)\).
Then $\|u\|_{C^{2,\alpha}(B_{1/2})}\le C$ for some constant $C$ depending only on $n$, $\lambda$,
and $\|f\|_{C^{0,\alpha}(B_{1})}$.
\end{theorem}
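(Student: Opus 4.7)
The plan is to iterate a perturbation argument, approximating $u$ at each scale by the solution of the constant-coefficient Monge-Amp\`ere equation $\det D^2 v = f(x_0)$, and using the Hölder continuity of $f$ to obtain geometric decay of the error. Fix $x_0 \in B_{1/2}$; after subtracting a supporting affine function we may assume $x_0$ is a minimum of $u$ with $u(x_0)=0$ and $0 \in \partial u(x_0)$. By Theorem \ref{thm:calfa}, $u$ is strictly convex and $C^{1,\alpha'}_{\rm loc}$, so the sections $S_t := \{u \leq t\}$ are compactly contained in $\Omega$ for small $t$ and shrink to $\{x_0\}$ as $t\to 0$.

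For each such $t$, Lemma \ref{lem:john} gives an affine map $A_t$ normalizing $S_t - x_0$, and the Alexandrov estimate together with the two-sided bound on $f$ forces $t^{n/2}|\det A_t| \sim 1$. After the natural Monge-Amp\`ere rescaling the function
\[
\tilde u_t(y) := \frac{u(A_t^{-1}y + x_0) - t}{t}
\]
vanishes on the boundary of the normalized convex domain $\Omega_t := A_t(S_t - x_0)$, and solves $\det D^2 \tilde u_t = \tilde f_t$ with $\tilde f_t$ comparable to $f(A_t^{-1}(\cdot) + x_0)$. Crucially, Caffarelli's engulfing-type comparability of consecutive sections of strictly convex Monge-Amp\`ere solutions ensures that the eccentricities of the $A_t$ stay under universal control across dyadic heights, which is what permits a meaningful iteration.

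Next, I would compare $\tilde u_t$ with the solution $v_t$ of $\det D^2 v_t = \tilde f_t(0)$ on $\Omega_t$ with zero boundary value. Since $\Omega_t$ is normalized and $\tilde f_t(0)$ is a constant in $[\lambda, 1/\lambda]$, Theorem \ref{thm:pogsmooth} applied on a fixed subdomain $\Omega_t' \Subset \Omega_t$ gives universal $C^{2}$ bounds for $v_t$ there, so $v_t$ agrees with a quadratic polynomial $Q_t$ up to an error of order $r^{2+\alpha}$ on $B_r \subset \Omega_t'$. Because $f \in C^{0,\alpha}$,
\[
\|\tilde f_t - \tilde f_t(0)\|_{L^\infty(\Omega_t)} \leq C\,[f]_{C^{0,\alpha}}\,\diam(S_t)^\alpha.
\]
Combining the stability result in Theorem \ref{thm:stability} with the fact that $v_t$ is uniformly convex (so that the linearized equation satisfied by $\tilde u_t - v_t$ is uniformly elliptic with $C^{0,\alpha}$ coefficients in the interior), one obtains a quantitative $C^{2,\alpha}$-closeness bound between $\tilde u_t$ and $v_t$ on $\Omega_t'$. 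Transferring back via $A_t^{-1}$ produces a quadratic polynomial $P_t$ with $\det D^2 P_t$ close to $f(x_0)$ such that $|u - P_t|$ on $S_{t/2}$ is a geometric factor smaller than on $S_t$.

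Finally, I iterate at heights $t_k := \rho^k t_0$ for a fixed $\rho \in (0,1)$: the polynomials $P_{t_k}$ form a Cauchy sequence whose Hessians converge to some symmetric matrix $M$, and one obtains
\[
\bigl|u(x) - u(x_0) - p\cdot (x-x_0) - \tfrac{1}{2}\langle M(x-x_0), x-x_0\rangle\bigr| \leq C|x-x_0|^{2+\alpha}
\]
for every $x_0 \in B_{1/2}$ and $p \in \partial u(x_0)$, with $C$ depending only on $n$, $\lambda$, and $\|f\|_{C^{0,\alpha}(B_1)}$. This pointwise $C^{2,\alpha}$ expansion, uniform in $x_0$, then upgrades to the full Schauder estimate $\|u\|_{C^{2,\alpha}(B_{1/2})} \leq C$ by a standard Campanato-type characterization. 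The main obstacle is the quantitative interior $C^{2,\alpha}$ closeness between $\tilde u_t$ and the frozen-coefficient comparator $v_t$ at each scale: this simultaneously requires the universal $C^2$ bounds for $v_t$ (which make the linearization uniformly elliptic), the Hölder decay of the right-hand side perturbation, and uniform control of the normalizing maps $A_t$ across scales -- these three ingredients together constitute the heart of Caffarelli's perturbation argument.
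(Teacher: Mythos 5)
Your overall architecture is exactly the one the paper sketches for this theorem: compare $u$, on normalized sections around $x_0$, with the solution $v_t$ of the frozen equation $\det D^2 v_t=f(x_0)$, use the universal interior estimates for $v_t$ (Theorem \ref{po1}), exploit the H\"older continuity of $f$ to make the right-hand sides close, iterate over dyadic heights, and conclude via a Campanato-type characterization of pointwise $C^{2,\alpha}$. This is Caffarelli's perturbation argument, which is what the paper describes (in two sentences) as "an approximation/interpolation argument."

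There is, however, one step whose justification as written would be circular. You claim a "quantitative $C^{2,\alpha}$-closeness bound between $\tilde u_t$ and $v_t$" by arguing that the difference satisfies a uniformly elliptic linearized equation with $C^{0,\alpha}$ coefficients. But the coefficients of that linearized equation involve $D^2\tilde u_t$, on which you have no a priori bounds -- controlling $D^2 u$ is precisely what you are trying to prove -- so you cannot invoke Schauder theory for the difference at this stage; likewise Theorem \ref{thm:stability} is purely qualitative and yields no rate. The correct (and sufficient) tool is the comparison principle together with Alexandrov's maximum principle (Lemma \ref{ch2:lem:comparison} and Theorem \ref{ch2:tim:alemax}): from $\|\tilde f_t-\tilde f_t(0)\|_{L^\infty}\le C[f]_{C^{0,\alpha}}\diam(S_t)^\alpha$ one deduces only an $L^\infty$ bound of the form $\|\tilde u_t-v_t\|_{L^\infty}\le C\,\|\tilde f_t-\tilde f_t(0)\|_{L^\infty}^{1/n}$ on the normalized section. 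Fortunately, everything you use downstream -- the geometric decay of $\sup_{S_{t/2}}|u-P_t|$ and the Cauchy property of the Hessians of the $P_{t_k}$ -- only requires this $L^\infty$ closeness combined with the $C^3$ interior bounds on $v_t$; the $C^{2,\alpha}$ regularity of $u$ emerges only at the end of the iteration, not at any single scale. With that correction (and keeping your remark on the uniform control of the eccentricities of the normalizing maps $A_t$, which is indeed essential), the proof is complete and follows the paper's intended route.
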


The proof of the above theorem is based on showing that, under the assumption that \(f\) is almost a constant (say very close to \(1\)), \(u\) is very close to the solution of \eqref{maloc} with right hand side \(1\). Since this latter function has interior a priori estimates (by Theorem \ref{po1}),
an approximation/interpolation argument permits to show that the \(C^2\) norm of \(u\) remains bounded. With this line of reasoning one  can also prove the following theorem of Caffarelli \cite{Cafw2p}:

\begin{theorem}Let  \(\Omega\) be a normalized convex set and \(u\) be a solution of \eqref{maloc}.
Then, for every \(p>1\), there exist positive constants \(\delta(p)\) and \(C=C(p)\) such that if \(\|f-1\|_\infty \le \delta(p)\), then \(\|u\|_{W^{2,p}(B_{1/2})}\le C.\)
\end{theorem}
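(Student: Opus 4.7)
My plan is to combine the smooth interior estimates available for the ``model'' equation $\det D^2 w=1$ with a dyadic iteration exploiting the affine invariance of the Monge-Amp\`ere operator, so that the speed of $L^p$-integrability of $D^2u$ can be tuned by how close $f$ is to a constant.

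The first step I would carry out is a one-step smallness lemma: for every $\eta\in(0,1)$ there exist $\delta_0(\eta)>0$ and a universal $N_0>0$ such that, whenever $u$ solves \eqref{maloc} on a normalized $\Omega$ with $\|f-1\|_\infty\le\delta_0(\eta)$, one has
\[
\bigl|\{x\in B_{1/2}\,:\,|D^2u(x)|>N_0\}\bigr|\le\eta.
\]
To prove it I would compare $u$ with the solution $w$ of $\det D^2w=1$ in $\Omega$ with $w=0$ on $\partial\Omega$. Since $B_1\subset\Omega\subset nB_1$, Pogorelov's interior estimate (Theorem \ref{po1}) gives $\|w\|_{C^2(B_{3/4})}\le C_0$ universal, while the stability statement of Theorem \ref{thm:stability} yields $\|u-w\|_{L^\infty(\Omega)}\le\omega(\delta)$ with $\omega(\delta)\to 0$. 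These two facts force $u$ to admit, at most points of $B_{1/2}$, a touching paraboloid of universal opening $N_0$ from above and from below; a standard contact-set argument then identifies such points as points of twice-differentiability of $u$ with $|D^2u|\le N_0$, the complement having measure at most $\eta$.

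The second step is the dyadic iteration. If $A$ is an area-preserving affine transformation normalizing the section $S(x,\nabla u(x),t)$, then $v:=(\det A)^{2/n}u\circ A^{-1}$ solves a Monge-Amp\`ere equation on a normalized domain with the \emph{same} bound $\|f\circ A^{-1}-1\|_\infty\le\delta$, cf.~\eqref{eq:v normalized}. Applying the one-step lemma to $v$ and transferring the estimate back through $A$ gives a Hessian bound of order $N_0$ on a controlled portion of the section. Covering the bad set at scale $k$ by essentially disjoint normalized sections of $u$ and iterating yields the distributional bound
\[
\bigl|\{x\in B_{1/2}\,:\,|D^2u(x)|>N_0^{k}\}\bigr|\le C\,\eta^{k}\qquad\forall\,k\in\N.
\]
The main obstacle I expect is precisely this covering/iteration step: one must cover the bad set at each scale by essentially disjoint normalized sections of $u$ through a Besicovitch/Vitali-type argument adapted to sections (whose shapes are governed by John's Lemma \ref{lem:john}), and one must keep careful track of how each normalization amplifies the Hessian bound by a factor comparable to $N_0$.

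Once the distributional inequality is established, the $W^{2,p}(B_{1/2})$ bound follows from the layer-cake formula:
\[
\int_{B_{1/2}}|D^2u|^p\,dx\le C+C\sum_{k=0}^\infty N_0^{(k+1)p}\,\eta^{k}=C\,N_0^p\sum_{k=0}^\infty(N_0^p\eta)^{k},
\]
which is finite as soon as $N_0^p\eta<1$. It is therefore enough to fix $\eta:=\tfrac{1}{2}N_0^{-p}$ and set $\delta(p):=\delta_0(\eta)$ in the one-step lemma to conclude.
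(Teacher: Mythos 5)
Your proposal is correct and follows essentially the same route the paper indicates (and which is Caffarelli's original argument in \cite{Cafw2p}): compare $u$ with the solution of \eqref{maloc} with right-hand side $1$, use Pogorelov's interior estimate (Theorem \ref{po1}) to get a one-step density estimate for the set where $D^2u$ is bounded, iterate over renormalized sections to obtain the geometric decay of $\bigl|\{|D^2u|>N_0^k\}\bigr|$, and finally tune the density parameter $\eta$ (hence $\delta$) to the exponent $p$ so that the layer-cake sum converges. The only quibble is the phrase ``area-preserving affine transformation'': the normalizing map $A$ of a section need not satisfy $\det A=1$, but the rescaling $(\det A)^{2/n}u\circ A^{-1}$ you actually use is the correct general one, so nothing is affected.
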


We notice that, by localizing the above result on small sections one deduces that
if $u$ is a strictly convex solutions of \eqref{eq:MA} with $f$ continuous then $u \in W^{2,p}_{\rm loc}$
for all $p<\infty$ (see \cite{Cafw2p} or \cite[Chapter 6]{G}).

Wang \cite{wang} showed that for any $p>1$ there exists a function $f$
satisfying $0<\lambda(p) \leq f\leq 1/\lambda(p)$ such that $u \not \in W^{2,p}_{\rm loc}$.
This counterexample shows that the results of Caffarelli are more or less optimal.
However, an important question which remained open for a long time was whether solutions of \eqref{eq:MA}
with $0<\lambda \leq f\leq 1/\lambda$
could be at least $W^{2,1}_{\rm loc}$,
or even $W^{2,1+\varepsilon}_{\rm loc}$ for some $\varepsilon=\varepsilon(n,\lambda)>0$. The question of \(W_{\rm loc}^{2,1}\) regularity has been recently solved by the authors in \cite{DF1}. Following the ideas introduced there, later  in  \cite{DFS, Shm} the result has been refined to \(u\in W_{\rm loc}^{2,1+\e}\) for some \(\e>0\).

\begin{theorem}\label{w21eps}Let  \(\Omega\) be a normalized convex set and \(u\) be be an Alexandrov solution of
\eqref{maloc}
 with \(\lambda\le f\le 1/\lambda\).
Then there exist positive constants \(\e=\e(n,\lambda)\) and \(C=C(n,\lambda)\) such that
$\|u\|_{W^{2,1+\e}(B_{1/2})}\le C.$ 
\end{theorem}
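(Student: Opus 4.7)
The plan is to establish a quantitative distribution estimate for $\|D^2u\|$ that yields geometric decay at large levels, from which $W^{2,1+\e}$ integrability follows by summation. By Caffarelli's Theorem \ref{thm:calfa}, on a normalized domain $u$ is strictly convex and $C^{1,\alpha}$, and the sections $S(x,p,t)$ enjoy a rich affine-invariant structure: by John's Lemma \ref{lem:john}, every section is comparable (via an affine map of determinant one) to a normalized convex set, and both the bound $\lambda\le f\le 1/\lambda$ and the Monge-Amp\`ere measure behave nicely under such normalizations, see \eqref{eq:v normalized}. I would first reduce, via the localization argument used in the proof of Theorem \ref{thm:pogsmooth}, to an interior estimate of the form $\|D^2u\|_{L^{1+\e}(B_{1/2})}\le C(n,\lambda)$ inside any normalized section.

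The central step is the following iteration-ready claim: there exist constants $M>1$ and $\tau\in(0,1)$, depending only on $n$ and $\lambda$, such that for every normalized sub-section $S$ of $u$ the ``bad set'' $\{x\in S:\|D^2u(x)\|>M\}$ can be covered by a disjoint family of sub-sections $\{S_i\}$ with $\sum_i|S_i|\le\tau|S|$. To prove this, I would combine Caffarelli's perturbation strategy with the Pogorelov estimate of Theorem \ref{po1}: a solution on a normalized domain with $f\equiv c$ constant satisfies $\|D^2u\|\le M$ on $B_{1/2}$, and by the stability of Alexandrov solutions (Theorem \ref{thm:stability}) together with the compactness of the class of normalized convex sets, any solution with $\lambda\le f\le 1/\lambda$ has $\|D^2u\|\le M$ on a definite fraction of $S$. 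The complementary bad set is then covered by sub-sections via a Besicovitch/Vitali covering argument, with the measure of each $S_i$ controlled via the Monge-Amp\`ere measure bound $\mu_u(S_i)\ge\lambda|S_i|$.

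Iterating this claim on sub-sections, which is legitimate because normalization preserves both the two-sided bound on $\det D^2 u$ and the class of admissible solutions, one obtains
$$
\bigl|\{x\in S_0:\|D^2u(x)\|>M^k\}\bigr|\le\tau^k|S_0|\qquad\forall\,k\in\N.
$$
Choosing $\e>0$ small enough that $M^{1+\e}\tau<1$ and integrating via the layer-cake formula yields $\int_{S_0}\|D^2u\|^{1+\e}\,dx\le C(n,\lambda)$; covering $B_{1/2}$ by finitely many sections finishes the proof. The main obstacle is the covering-plus-decay lemma: the constants $M$ and $\tau$ must be genuinely \emph{affine-invariant}, so that the estimate can be iterated freely on sub-sections after renormalization. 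This is what forces a sharp quantitative version of Caffarelli's perturbation argument, tracking exactly how the Monge-Amp\`ere measure concentrates inside sections of the approximating $f\equiv 1$ solution; this delicate point was carried out in \cite{DF1} to obtain $L\log L$ and refined in \cite{DFS,Shm} to produce the genuine higher integrability $W^{2,1+\e}_{\rm loc}$.
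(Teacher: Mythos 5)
Your overall route (a covering decomposition on sections plus geometric decay across dyadic levels of $\|D^2u\|$) is not the one sketched in the paper, which obtains $L\log L$ by proving the maximal-function bound $\|\mathcal M(D^2 u)\|_{L^1}\le C\|D^2 u\|_{L^1}$ on the space of homogeneous type generated by the sections and invoking Stein's theorem; your scheme is closer in spirit to the refinement in \cite{DFS}. However, the decay claim you place at the center of the argument is not the one that can be proved, and the discrepancy is exactly where the content of the theorem lies.

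First, the arithmetic. Decay of the \emph{measure} of the superlevel sets, $|\{\|D^2u\|>M^k\}|\le\tau^k|S_0|$, gives $\sum_k M^{(k+1)(1+\e)}\tau^k<\infty$ only if $M^{1+\e}\tau<1$, hence only if $\tau<1/M$; no choice of small $\e$ helps once $M\tau\ge 1$. But the compactness/perturbation step you sketch (Pogorelov bound for the $f\equiv{\rm const}$ approximation plus $C^0$ stability) only yields \emph{some} fraction $\tau<1$ with no quantitative relation to $M$, and even a Vitali selection in which each covering section satisfies $\frac{1}{|S_i|}\int_{S_i}\|D^2u\|\ge cM$ only gives $\tau\le C(n,\lambda)/M$ with $C>1$, so still $M\tau\ge1$. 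A bound $\tau<M^{-1-\e}$ with constants depending only on $(n,\lambda)$ is essentially a restatement of the weak-$L^{1+\e}$ estimate being proved. Second, the iteration does not go through as stated: $\|D^2u\|$ is not affine invariant, so after renormalizing a covering section $S_i$ by $A_i$ the bad set $\{\|D^2v_i\|>M\}$ of the renormalized solution $v_i=(\det A_i)^{2/n}u\circ A_i^{-1}$ has no a priori relation to $\{\|D^2u\|>M^2\}\cap S_i$ unless the eccentricity of $A_i$ is controlled, which it is not when $f$ is merely bounded between $\lambda$ and $1/\lambda$. The fix in \cite{DFS} is to propagate the decay of the weighted quantity $\int_{D_k}\|D^2u\|$ (for suitable supersets $D_k$ of the superlevel sets) rather than of $|D_k|$: for that quantity any $\tau<1$ suffices, since one then needs only $M^{\e}\tau<1$, which holds for $\e$ small. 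As written, your key lemma is stronger than what your covering argument delivers, and the renormalized iteration step fails.
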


Note that all the previous results hold for strictly convex solutions of \eqref{eq:MA}.
Indeed, it suffices to pick a section $S(x,p,t)\Subset \Omega$,
choose an affine transformation $A$ so that $A\bigl(S(x,p,t) \bigr)$
is normalized (see \eqref{eq:v normalized}),
and then apply the results above with $A\bigl(S(x,p,t) \bigr)$ in place of $\Omega$.\\

We now briefly sketch the  ideas behind the proof of the \(W^{2,1}\) regularity in \cite{DF1}.
To prove that the distributional Hessian \(D_\DD^2 u\) of an
Alexandrov solution is a \(L^1\) function, it is enough to prove an a priori
equi-integrability estimate on smooth solutions.\footnote{Note that  it is pretty simple to show that for smooth convex functions \(\|D^2 u\|_{L^1}\le C\osc u\). However, since
an equi-bounded sequence of \(L^1\) functions can converge to a singular measure, this  is not enough to prevent  concentration of the Hessians in the limit,
and this is why we need an equi-integrability estimate on $D^2u$
which is independent of $u$.} To do this, a key observation is that any domain  \(\Omega\) endowed with the Lebesgue measure and the family of ``balls'' given
by the  sections \(\{S(x,p,t)\}_{x\in \Omega,\, t\in \R}\) of solutions  of \eqref{maloc} as defined in \eqref{sec} is a space homogenous type  in the sense of Coifman
and Weiss, see \cite{CG, GH, AFT}. In particular Stein's Theorem implies that if 
\[
\mathcal M(D^2 u)(x):=\sup_{t}\mean{S(x,p,t)} |D^2 u| \in L^1,
\]
then \(|D^2 u|\in L\log L\), that is $\int |D^2 u| \log(2+|D^2u|) \leq C$. Hence one wants to prove that
$\mathcal M(D^2u) \in L^1$, and the key estimate in \cite{DF1} consists in showing that
\[
\|\mathcal M(D^2 u)\|_{L^1}\le C \|D^2 u\|_{L^1},
\]
for some constant \(C=C(n,\lambda)\), which proves the result.\\

 A more careful applications of these ideas then gives a priori \(W^{2,1+\e}\) estimates. 
As shown for instance in \cite{F-proc}, the approach in \cite{DFS}
can be also used to give a very direct and short proof of the $W^{2,p}$ estimates of Caffarelli in \cite{Cafw2p}. 

Using Theorem \ref{w21eps}, the authors could show in \cite{DFstab} the following stability property of solutions  of \eqref{maale}  in the \emph{strong} Sobolev topology. Due the highly non-linear character of the Monge-Amp\`ere equation, the result is nontrivial.

\begin{theorem}\label{conv} 
Let $\Omega_k\subset \R^n$ be a  family of bounded convex domains, and  let $u_k:\Omega_k \to \R$ be convex Alexandrov solutions of
\begin{equation*}
\begin{cases}
\det D^2 u_k=f_k \quad &\text{in $\Omega_k$}\\
u_k=0 &\text{on $\partial \Omega_k$}
\end{cases}
\end{equation*}
with  $0<\lambda \le f_k \le 1/\lambda$. Assume that $\Omega_k$ converge to some bounded convex domain $\Omega$
in the Hausdorff distance, and $f_k \mathbf 1_{\Omega_k}$ converge to $f$ in $L^1_{\rm loc}(\Omega)$. Then, if $u$
denotes the unique Alexandrov solution of 
\[
\begin{cases}
\det D^2 u=f \quad &\text{in $\Omega$}\\
u=0 &\text{on $\partial \Omega$},
\end{cases}
\] 
for any $\Omega'\Subset  \Omega$ we have
\begin{equation*}\label{L1econv}
\|u_k-u\|_{W^{2,1}(\Omega')}\to 0 \qquad \text{as $k \to \infty$}.
\end{equation*}
\end{theorem}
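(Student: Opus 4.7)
The strategy is to combine the uniform $W^{2,1+\e}$ estimate of Theorem \ref{w21eps} with a stability version of the Minkowski determinant inequality applied to $v_k := (u_k+u)/2$.

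\emph{Step 1 (compactness).} By Theorem \ref{thm:stability}, $u_k \to u$ locally uniformly in $\Omega$. Strict convexity on compact subsets (a consequence of $f_k,f\ge\lambda>0$ and Caffarelli's theory), together with Theorem \ref{thm:calfa} and Theorem \ref{w21eps}, then yields uniform bounds of $u_k$ in $C^{1,\alpha}_{\rm loc}(\Omega)\cap W^{2,1+\e}_{\rm loc}(\Omega)$ for $\e,\alpha$ depending only on $n,\lambda$. In particular, for every $\Omega'\Subset\Omega$ the sequence $D^2u_k$ is equi-integrable in $L^{1+\e}(\Omega')$, and the $C^1_{\rm loc}$ convergence $u_k\to u$ identifies its weak limit as $D^2u$.

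\emph{Step 2 (Minkowski gap vanishes).} Let $v_k:=(u_k+u)/2$. Then $v_k$ is convex and belongs to $W^{2,1+\e}_{\rm loc}$, so its Monge-Amp\`ere measure is a.e.\ given by $\det D^2v_k\,dx$. At a.e.\ $x$ both $D^2u_k(x)$ and $D^2u(x)$ are positive definite, so Minkowski's determinant inequality gives
\begin{equation*}
\det D^2v_k(x)\ge\Bigl(\tfrac{f_k(x)^{1/n}+f(x)^{1/n}}{2}\Bigr)^n\qquad\text{a.e.\ in }\Omega.
\end{equation*}
Since $v_k\to u$ uniformly, Proposition \ref{uni} gives $\det D^2v_k\,dx\weakto f\,dx$ weakly$^{*}$; as $f\in L^\infty$, $\mu_u(\p\Omega')=0$ for every smooth $\Omega'\Subset\Omega$, so $\int_{\Omega'}\det D^2v_k\to\int_{\Omega'}f$. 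The bounds $\lambda\le f_k,f\le 1/\lambda$ and $f_k\to f$ in $L^1_{\rm loc}$ moreover give $\int_{\Omega'}\bigl(\tfrac{f_k^{1/n}+f^{1/n}}{2}\bigr)^n\to\int_{\Omega'}f$. Subtracting, the nonnegative Minkowski gap
\begin{equation*}
G_k(x):=\det D^2v_k(x)-\Bigl(\tfrac{f_k(x)^{1/n}+f(x)^{1/n}}{2}\Bigr)^n
\end{equation*}
tends to $0$ in $L^1(\Omega')$, hence to $0$ pointwise a.e.\ along a subsequence.

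\emph{Step 3 (quantitative Minkowski and conclusion).} Equality in the Minkowski inequality forces $D^2u_k(x)$ and $D^2u(x)$ to be proportional with ratio $(f_k(x)/f(x))^{1/n}\to 1$; quantitatively, on any measurable set on which the eigenvalues of $D^2u$ lie in a fixed compact subinterval of $(0,\infty)$, smallness of $G_k$ together with $f_k\to f$ a.e.\ controls $|D^2u_k-D^2u|$ pointwise. The main obstacle is that the eigenvalues of $D^2u$ are only controlled in $L^{1+\e}$, so this stability must be applied on an exhaustion $E_j\subset\Omega'$ where the eigenvalues of $D^2u$ lie in $[1/j,j]$, using the equi-integrability of $D^2u_k$ from Step 1 to render the contribution of $\Omega'\setminus E_j$ negligible. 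Combining the resulting a.e.\ convergence $D^2u_k\to D^2u$ with equi-integrability, Vitali's convergence theorem yields $D^2u_k\to D^2u$ in $L^1(\Omega')$. A standard subsequence argument promotes this to the full sequence, and together with the $C^1_{\rm loc}$ convergence of Step 1 gives $\|u_k-u\|_{W^{2,1}(\Omega')}\to 0$.
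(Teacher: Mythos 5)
The survey states this theorem without proof, deferring to \cite{DFstab}; your argument is correct and is essentially the proof given there: local uniform convergence plus the uniform interior $W^{2,1+\e}$ bound for equi-integrability, the Minkowski determinant inequality applied to $(u_k+u)/2$ to produce a nonnegative gap whose integral is squeezed to zero, and the quantitative equality case to upgrade to a.e.\ (hence, by Vitali, $L^1$) convergence of the Hessians. The only imprecision is in Step 2: the Monge--Amp\`ere measure of $v_k$ need not be known to be absolutely continuous, but you only need the one-sided inequality $\mu_{v_k}\ge \det D^2 v_k\,dx$ (the Alexandrov Hessian determinant is the density of the absolutely continuous part of $\mu_{v_k}$), which combined with Proposition \ref{uni} gives the required upper bound $\limsup_k\int_{\Omega'}\det D^2 v_k\le\int_{\Omega'}f$, while the Minkowski inequality gives the matching lower bound; the rest goes through as you describe.
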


Let us conclude this section mentioning that Savin recently introduced new techniques to obtain
global versions of all the above regularity results 
under suitable regularity assumptions on the boundary \cite{Savin localization,Savin boundary,SavinW2p}.

\medskip    \subsection{An application: global existence for the semigeostrophic equations}\label{semigeo}

Let us conclude this discussion on the regularity of weak solutions
by showing an application of Theorem \ref{w21eps} to prove the existence of distributional solutions for  the semigeostrophic system.

The semigeostrophic equations are a simple model used
in meteorology to describe large scale atmospheric flows.
As explained for instance in \cite[Section 2.2]{bebr}
(see also \cite{cu} for a more complete exposition),
these equations can be derived from the 3-d 
Euler equations, with Boussinesq and hydrostatic approximations, subject to a strong
Coriolis force.
Since for large scale atmospheric flows the Coriolis force dominates
the advection term, the flow is mostly bi-dimensional. For this reason,
the study of the semigeostrophic equations in 2-d or 3-d is pretty similar,
and in order to simplify our presentation we focus here on the 2-dimentional periodic case.

The semigeostrophic system can be written as
\begin{equation}\label{eqn:SGsystem2}
\begin{cases}
\partial_t \nabla p_t + (\uu_t\cdot \nabla) \nabla p_t  +\nabla^\perp p_t +\uu_t = 0\\
\nabla \cdot \uu_t = 0 \\
p_0= \bar p
\end{cases}
\end{equation}
where $\uu_t:\R^2 \to \R^2$ and $p_t:\R^2 \to \R$ are periodic functions
corresponding respectively to the velocity and the pressure, and \(\nabla^\perp p_t \) is the \(\pi/2\) counterclockwise rotation of \(\nabla p\).

As shown in \cite{cu},
energetic considerations show that it
is natural to assume that $p_t$ is ($-1$)-convex, i.e., the function
$P_t(x):=p_t(x)+|x|^2/2$ is convex on $\R^2$. If we denote with
$\mathcal L_{\T^2}$ the Lebesgue measure on the 2-dimensional torus, then
formally \footnote{See \eqref{pushforward} for the definition of $(\nabla P_t)_\sharp \mathcal L_{\T^2}$.} $\rho_t:=(\nabla P_t)_\sharp \mathcal L_{\T^2}$ \
satisfies the following \emph{dual problem} (see for instance \cite[Appendix]{ACDF1}):

\begin{equation}\label{eqn:dualsystem}
\begin{cases}
\partial_t \rho_t +\nabla \cdot (\boldsymbol {\mathcal U}_t \rho_t) = 0 \\
\boldsymbol {\mathcal U}_t(x) = \bigl(x-\nabla P_t^*(x)\bigr)^\perp\\
\rho_t= (\nabla P_t)_{\sharp} \mathcal L_{\T^2}\\
P_0(x)= \bar p(x)+|x|^2/2,
\end{cases}
\end{equation}
where $P^*_t$ is the convex conjugate of $P_t$, namely
\[
P_t^*(y):=\sup_{x\in\R^2} \bigl\{y \cdot x-P_t(x)\bigr\}.
\]
The dual problem \eqref{eqn:dualsystem} is nowadays pretty well
understood. In particular, Benamou and Brenier proved in
\cite{bebr} existence of weak solutions to \eqref{eqn:dualsystem}.
On the contrary, much less is
known about the original system \eqref{eqn:SGsystem2}. Formally,
given a solution $(\rho_t,P_t)$ of \eqref{eqn:dualsystem}, the pair $(p_t,\uu_t)$ given
by
\begin{equation}\label{eqn:velocity}
\begin{cases}
p_t(x):=P_t(x)-|x|^2/2& \cr \uu_t(x):= \partial_t\nabla P_t^* (\nabla
P_t(x)) + D^2 P_t^* ( \nabla P_t(x))\,\bigl(\nabla P_t(x) - x\bigr)^\perp&
\end{cases}
\end{equation}
solves \eqref{eqn:SGsystem2}. 

Being $P^*_t$ just a convex
function, \emph{a priori} $D^2 P_t^*$ is a matrix-valued measure,
thus it is not clear the meaning to
give to the previous formula.
However, since $\rho_t$ solves a continuity equation
with a divergence free vector field (notice that $\boldsymbol {\mathcal U}_t $
is the rotated gradient of the function $|x|^2/2-P_t^*(x)$, see \eqref{eqn:dualsystem}),
the only available bound on $\rho_t$ is
\begin{equation}
\label{eq:bounds rhot}
0<\lambda \leq \rho_t\leq 1/\lambda \qquad \forall\,t >0
\end{equation}
provided this bound holds at $t=0$.

In addition, the relation $\rho_t= (\nabla P_t)_\sharp\mathcal L_{\T^2}$
implies that $ (\nabla P_t^*)_\sharp\rho_t=\mathcal L_{\T^2}$
(since $\nabla P_t^*$ is the inverse of $\nabla P_t$), from which it follows  that $P_t^*$
solves in the Alexandrov sense the Monge-Amp\`ere equation
$$
\det(D^2P_t^*)=\rho_t
$$
(see Section \ref{sect:Alex sol} for the definition of Alexandrov solution and Theorem \ref{thm:regularity transport} below).
Hence,  by Theorem \ref{w21eps}
$D^2P_t^* \in L^{1+\e}$, which allows us to give a meaning to the velocity field $\uu_t$ defined in \eqref{eqn:velocity},
and prove that $(p_t,\uu_t)$ solve \eqref{eqn:SGsystem2}.

This has been recently done in \cite{ACDF1}, where the following result has been proved (see \cite{ACDF2} for an extension of this result to three dimensions):

\begin{theorem}\label{thm:mainSG}
Let $\bar p : \R^2 \to \R$ be a $\Z^2$-periodic function such that
$\bar p(x)+ |x|^2/2$ is convex, and assume that the measure $
(Id+\nabla \bar p)_\sharp \mathcal L_{\T^2}$ is absolutely continuous with
respect to the Lebesgue measure with density $\bar \rho$, namely
\[
(Id+\nabla\bar  p)_\sharp\mathcal L_{\T^2} =\bar\rho.
\]
Moreover, let us assume \(\lambda \le \bar \rho\le 1/\lambda
\) for some \(\lambda>0\).

Let $\rho_t$ be a solution of \eqref{eqn:dualsystem} starting from $\bar \rho$,
and let $P_t:\R^2\to \R$ be the (unique up
to an additive constant) convex function such that $(\nabla
P_t)_\sharp\mathcal L_{\T^2}=\rho_t$ and $P_t(x)-|x|^2/2$ is $\Z^2$-periodic~\footnote{The existence of such a map is a consequence of  Theorem \ref{thm:McCann}, see \cite[Theorem 2.1]{ACDF1}}. Denote by
$P_t^*:\R^2\to\R$ its convex conjugate.

Then the couple $(p_t,\uu_t)$ defined in \eqref{eqn:velocity} is a distributional solution of
\eqref{eqn:SGsystem2}.
\end{theorem}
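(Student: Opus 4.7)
The plan is to prove existence by approximation: regularize the initial datum $\bar\rho$, run the dual system to produce smooth solutions $(\rho_t^{(k)},P_t^{(k)})$ for which the corresponding $(p_t^{(k)},\uu_t^{(k)})$ defined by \eqref{eqn:velocity} are classical solutions of \eqref{eqn:SGsystem2}, and then pass to the limit using the strong Sobolev stability of the Monge-Amp\`ere equation (Theorem \ref{conv}) together with the $W^{2,1+\varepsilon}$-regularity of $P_t^*$ (Theorem \ref{w21eps}).

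First I would give a precise meaning to \eqref{eqn:velocity}. The relation $(\nabla P_t)_\sharp \mathcal L_{\T^2}=\rho_t$ combined with the fact that $\nabla P_t^*$ is the a.e.\ inverse of $\nabla P_t$ gives $(\nabla P_t^*)_\sharp \rho_t = \mathcal L_{\T^2}$, which, as noted in the discussion preceding the theorem, implies that $P_t^*$ is an Alexandrov solution of $\det D^2 P_t^*=\rho_t$. Since $\lambda\le\rho_t\le 1/\lambda$, Theorem \ref{w21eps} applied on sections of $P_t^*$ yields $D^2 P_t^*\in L^{1+\varepsilon}_{\rm loc}$. Combined with $\rho_t\le 1/\lambda$, the area formula transfers this bound to $D^2 P_t^*\circ\nabla P_t\in L^{1+\varepsilon}_{\rm loc}$, so the second summand in the definition of $\uu_t$ is integrable; the first summand, $\partial_t\nabla P_t^*\circ\nabla P_t$, is interpreted distributionally using the continuity equation for $\rho_t$ in \eqref{eqn:dualsystem}.

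Next I would regularize: choose smooth $\Z^2$-periodic densities $\bar\rho^{(k)}$ with $\lambda\le\bar\rho^{(k)}\le 1/\lambda$ converging to $\bar\rho$ in $L^1$. The existence theory for the dual system due to Benamou and Brenier \cite{bebr} produces smooth solutions $(\rho_t^{(k)},P_t^{(k)})$ retaining the same bounds, which in turn give smooth $P_t^{(k),*}$ via $\det D^2 P_t^{(k),*}=\rho_t^{(k)}$. For these smooth data a direct computation---differentiating the identity $\nabla P_t^{(k),*}(\nabla P_t^{(k)}(x))=x$ in time and substituting $\partial_t\rho_t^{(k)}=-\nabla\cdot(\boldsymbol{\mathcal U}_t^{(k)}\rho_t^{(k)})$ with $\boldsymbol{\mathcal U}_t^{(k)}(x)=(x-\nabla P_t^{(k),*}(x))^\perp$---shows that the corresponding $(p_t^{(k)},\uu_t^{(k)})$ is a classical solution of \eqref{eqn:SGsystem2}.

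Finally I would pass to the limit. The uniform bounds $\lambda\le\rho_t^{(k)}\le 1/\lambda$ allow one to apply Theorem \ref{conv} to $\det D^2 P_t^{(k),*}=\rho_t^{(k)}$, yielding strong $W^{2,1}_{\rm loc}$ convergence $P_t^{(k),*}\to P_t^*$; together with uniform convergence of $P_t^{(k)}\to P_t$ (hence a.e.\ convergence of $\nabla P_t^{(k)}\to \nabla P_t$ by convexity) and weak-$*$ convergence $\rho_t^{(k)}\weakto \rho_t$, this is enough to pass to the limit in every spatial term of \eqref{eqn:SGsystem2} tested against smooth functions. The hard step will be the time-derivative term $\partial_t\nabla P_t^*\circ \nabla P_t$, for which only spatial Sobolev bounds are available. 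The key trick, which is the technical heart of the argument, is to rewrite this term using the dual continuity equation $\partial_t\rho_t+\nabla\cdot(\boldsymbol{\mathcal U}_t\rho_t)=0$, converting $\partial_t\nabla P_t^*$ into spatial expressions involving $\boldsymbol{\mathcal U}_t$ and $D^2 P_t^*$---precisely the quantities for which strong $L^1_{\rm loc}$ convergence is guaranteed by Theorem \ref{conv}. Once this reformulation is carried out, every piece passes to the limit and the distributional identity \eqref{eqn:SGsystem2} is inherited.
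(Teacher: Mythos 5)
There is a genuine gap in your second step. You propose to approximate $\bar\rho$ by smooth data and invoke the existence theory of Benamou--Brenier to obtain \emph{smooth} solutions $(\rho_t^{(k)},P_t^{(k)})$ of the dual system, for which $(p_t^{(k)},\uu_t^{(k)})$ would be classical solutions of \eqref{eqn:SGsystem2}. No such global-in-time smooth solutions are available: Benamou--Brenier only produce weak solutions, and, as recalled right after the statement of Theorem \ref{thm:mainSG}, even for smooth initial data every ``simple'' higher-order estimate on $\rho_t$ blows up in finite time, so that smoothness of solutions to \eqref{eqn:dualsystem} is only known for short time \cite{LoeSG}. The only bound on $\rho_t$ that propagates is $\lambda\le\rho_t\le1/\lambda$ --- this is precisely why the $W^{2,1+\e}$ theory of Theorem \ref{w21eps} was developed. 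A further obstruction to your scheme is that weak solutions of \eqref{eqn:dualsystem} are not known to be unique, so even if the approximating solutions converged, there would be no way to guarantee that their limit is the \emph{given} solution $\rho_t$ appearing in the statement; and the strong $L^1_{\rm loc}$ convergence $\rho_t^{(k)}\to\rho_t$ needed to apply Theorem \ref{conv} would not follow from the available weak-$*$ compactness.

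The correct route, which is the one followed in \cite{ACDF1} and sketched in Section \ref{semigeo}, dispenses with any approximation of the nonlinear system: one works directly with the given weak solution $\rho_t$, uses the relation $(\nabla P_t^*)_\sharp\rho_t=\mathcal L_{\T^2}$ to see that $P_t^*$ is an Alexandrov solution of $\det D^2P_t^*=\rho_t$ with $\lambda\le\rho_t\le1/\lambda$ at \emph{each fixed time}, and applies Theorem \ref{w21eps} to get $D^2P_t^*\in L^{1+\e}$, which gives pointwise meaning to $\uu_t$. One then establishes suitable time-regularity of $t\mapsto\nabla P_t$ and $t\mapsto\nabla P_t^*$ and verifies the distributional identity by testing against smooth functions, substituting the dual continuity equation for the time derivative (combined with Theorem \ref{conv} to justify the limits in a spatial regularization of the fixed weak solution). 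Your first, second-to-none observation about interpreting $\uu_t$ via Theorem \ref{w21eps}, and your final remark that the time-derivative term must be converted into spatial quantities via the dual continuity equation, are exactly the right ingredients; the flaw is embedding them in a smooth-approximation scheme for the coupled system that does not exist.
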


Let us point out that \eqref{eq:bounds rhot} is essentially the only regularity property for solutions of \eqref{eqn:dualsystem} that is stable in time. Indeed, even if we start from a smooth initial datum \(\bar \rho\), due to the non-linear coupling between the density \(\varrho_t\) and the velocity field \(\boldsymbol{\mathcal U_t}\), any ``simple''  higher order estimate blows up in finite  time (it is however possible to prove  \emph{short time} existence for smooth solutions, see \cite{LoeSG}). In view of these considerations, it is clear that understanding the  regularity of  solutions of the Monge-Amp\`ere equation  with right hand side just bounded away from zero and infinity it is a key step for the proof of Theorem \ref{thm:mainSG}.

\medskip    \section{The optimal transport problem}
The Monge transportation problem is more than 200 years old
\cite{Mon}, and it has generated in the last years a huge amount
of work.

Originally Monge wanted to move, in the Euclidean space $\R^3$,  a
rubble (\emph{d\'eblais}) to build up a mound or fortification
(\emph{remblais}) minimizing the transportation cost. 
In Monge's original formulation, the cost to transport some mass $m$ from $x$ to $y$
was given by $m|x-y|$. However, it makes sense to consider more general cost functions $c$,
where $c(x,y)$ represents the cost to move a unit mass from $x$ to $y$.

Hence, nowadays, the optimal transport problem is formulated in the
following general form: given two probability measures $\mu$ and
$\nu$ (representing respectively the rubble and the mound) defined on the measurable spaces $X$ and $Y$, find a
measurable map $T:X \rightarrow Y$ with $T_\sharp \mu=\nu$, i.e.,
\begin{equation}\label{pushforward}
\nu(A)=\mu\left(T^{-1}(A)\right) \qquad\forall \,A \subset Y
\mbox{ measurable,}
\end{equation}
in such a way that $T$ minimizes the transportation cost. This
means
\begin{equation}
\label{eq:transport problem}
\int_X c(x,T(x))\,d\mu(x) = \min_{S_\# \mu=\nu} \left\{ \int_X
c(x,S(x))\,d\mu(x) \right\},
\end{equation}
where $c:X \times Y \rightarrow \R$ is some given cost function,
and the minimum is taken over all measurable maps $S:X\to Y$ such
that $S_\# \mu=\nu$. When the transport condition $T_\# \mu=\nu$
is satisfied, we say that $T$ is a \textit{transport map}, and if
$T$ also minimizes the cost we call it an \textit{optimal
transport map}.\\

Even in Euclidean spaces with the cost $c$ equal to the Euclidean
distance or its square, the problem of the existence of an optimal
transport map is far from being trivial. Moreover, it is easy to
build examples where the Monge problem is ill-posed simply because
there is no transport map: this happens for instance when $\mu$ is
a Dirac mass while $\nu$ is not. This means that one needs some
restrictions on the measures $\mu$ and $\nu$.

We notice that,
if \(X,Y\subset \R^n\), $\mu(dx)=f(x)dx$, and $\nu(dy)=g(y)dy$,
if $S:X\to Y$ is a sufficiently smooth transport map one can rewrite the transport condition
$S_\# \mu =\nu$
as a Jacobian equation.
Indeed, if $\chi:\R^n\to \R$ denotes a test function, the condition $S_\# \mu =\nu$
gives
$$
\int_{\R^n}\chi(S(x))f(x)\,dx= \int_{\R^n}\chi(y)g(y)\,dy.
$$
Now, assuming in addition that $S$
is a diffeomorphism, we can set 
 $y=S(x)$ and use the change of variable formula to obtain that the second integral is equal to
$$
\int_{\R^n} \chi(S(x)) g(S(x)) \bigl|\det(\n S(x)) \bigr|\,dx.
$$
By the arbitrariness of $\chi$, this gives the Jacobian equation
$$
f(x)=g(S(x)) \bigl|\det(\n S(x)) \bigr| \qquad \text{a.e.},
$$
as desired.

\medskip    \subsection{The quadratic cost on $\R^n$}
\label{sect:quadratic}

In \cite{Bre1,Bre2}, Brenier considered the case $X=Y=\R^n$ and
$c(x,y)=|x-y|^2/2$, and proved the following theorem
(which was also obtained independently by Cuesta-Albertos and
Matr\'an \cite{CAM} and by Rachev and R\"uschendorf \cite{RR}).
For an idea of the proof, see the sketch of the proof of Theorem \ref{cbrenier}
below, which includes this result as a special case.
\begin{theorem}
\label{thm:Brenier} Let $\mu$ and $\nu$ be two compactly supported
probability measures on $\R^n$. If $\mu$ is absolutely continuous
with respect to the Lebesgue measure, then:
\begin{enumerate}
\item[(i)] There exists a unique solution $T$ to the optimal transport
problem with cost $c(x,y)=|x-y|^2/2$.
\item[(ii)] The optimal map $T$ is characterized by the
structure $T(x)=\nabla u(x)$ for some convex function
$u:\R^n\to \R$.
\end{enumerate}
Furthermore, if $\mu(dx)=f(x)dx$ and $\nu(dy)=g(y)dy$, then $T$ is differentiable $\mu$-a.e. and
\begin{equation}\label{eq:jacbr}
\bigl|\det(\n T(x)) \bigr|=\frac{f(x)}{g(T(x))} \qquad \text{for
$\mu$-a.e. $x \in \R^n$.}
\end{equation}
\end{theorem}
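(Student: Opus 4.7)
The plan is to relax the Monge problem to Kantorovich's linear formulation, extract a plan supported on a cyclically monotone set, then use Rockafellar's theorem to represent that set as the graph of a subdifferential of a convex function. First I would fix the cost $c(x,y)=|x-y|^2/2$ and consider the set $\Pi(\mu,\nu)$ of probability measures on $\R^n\times\R^n$ with marginals $\mu$ and $\nu$. Since $\mu,\nu$ are compactly supported, $\Pi(\mu,\nu)$ is tight and weakly compact, and the functional $\pi\mapsto\int c\,d\pi$ is continuous, so a minimizer $\pi^*$ exists. A standard argument (swap two sampled pairs $(x_i,y_i)$, $(x_{i+1},y_{i+1})$ and use optimality of $\pi^*$) shows that $\supp(\pi^*)$ is $c$-cyclically monotone. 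Expanding the squares $|x_i-y_i|^2-|x_i-y_{i+1}|^2$ the quadratic terms in $x_i$ and $y_i$ cancel, so cyclical monotonicity for $c$ is equivalent to the classical cyclical monotonicity of the set $\supp(\pi^*)$ viewed as a subset of $\R^n\times\R^n$.

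Next I would invoke Rockafellar's theorem: every cyclically monotone subset of $\R^n\times\R^n$ is contained in the graph of the subdifferential $\partial u$ of some lower-semicontinuous convex function $u:\R^n\to\R\cup\{+\infty\}$. Thus $\pi^*$ is concentrated on $\{(x,y):y\in\partial u(x)\}$. Since $\mu$ is absolutely continuous with respect to Lebesgue measure and $u$ is convex (hence locally Lipschitz on the interior of its domain), $u$ is differentiable $\mu$-almost everywhere, and at every point of differentiability $\partial u(x)=\{\n u(x)\}$. Consequently $\pi^*$ is concentrated on the graph of $T:=\n u$, which forces $\pi^*=(\Id\times T)_\sharp\mu$ and proves that $T$ is an optimal transport map of the form claimed in (ii). For uniqueness in (i), any other optimal map $\tilde T$ would give an optimal plan, again of the form $(\Id\times\n\tilde u)_\sharp\mu$ for some convex $\tilde u$; comparing the two and using the absolute continuity of $\mu$ together with the fact that two convex functions whose gradients push $\mu$ forward to the same $\nu$ must have $\n u=\n\tilde u$ $\mu$-a.e. (an argument via strict convexity of the cost applied to the half-sum plan), one concludes $T=\tilde T$ $\mu$-a.e.

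Finally, for the Jacobian identity \eqref{eq:jacbr}, I would appeal to Alexandrov's theorem: a convex function is twice differentiable Lebesgue-a.e., so at $\mu$-a.e.\ $x$ the map $T=\n u$ admits a symmetric nonnegative definite pointwise differential $\n T(x)=D^2u(x)$, hence $|\det\n T(x)|=\det D^2u(x)$. To derive the pointwise equation from the transport condition, fix a test function $\chi\in C_c(\R^n)$ and write
\[
\int \chi(y)g(y)\,dy=\int\chi(T(x))f(x)\,dx.
\]
Applying the area formula of Federer to the Lipschitz map $T$ on a compact set where $\n T$ is defined and bounded, the right-hand side equals $\int\chi(y)\bigl(\sum_{x\in T^{-1}(y)}f(x)/\det D^2u(x)\bigr)dy$; by the arbitrariness of $\chi$ and an argument showing that $T$ is essentially injective (otherwise two distinct $x_1\ne x_2$ with $\n u(x_1)=\n u(x_2)$ would force $u$ to be affine on $[x_1,x_2]$, producing a set of positive $\mu$-measure where $u$ is not strictly convex but the image still has to carry a density, which yields a contradiction after a careful slicing argument), one recovers $f(x)=g(T(x))\det D^2u(x)$ for $\mu$-a.e.\ $x$.

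The main obstacle I expect is the last step: deducing the pointwise Jacobian identity without assuming a priori $C^1$ or invertibility of $T$. One must combine Alexandrov's theorem, the area formula, and a delicate injectivity/essential-injectivity argument for gradients of convex functions; handling the set where $D^2u$ is degenerate (so that $T$ collapses a positive-measure set) requires showing such a set is $\mu$-negligible, which is ultimately forced by the fact that $\n u$ must transport $f\,dx$ to $g\,dy$ with $g$ finite.
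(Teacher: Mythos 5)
Your proposal is correct and follows essentially the same route as the paper, which obtains this statement as the special case $c(x,y)=-x\cdot y$ of Theorem \ref{cbrenier}: Kantorovich relaxation and weak compactness, $c$-cyclical monotonicity of the support of an optimal plan, Rockafellar's theorem, a.e.\ differentiability of the convex potential (using the absolute continuity of $\mu$) to concentrate the plan on a graph, and the half-sum argument for uniqueness. The Jacobian identity \eqref{eq:jacbr} is only stated (not proved) in the paper, but your outline for it --- Alexandrov's theorem, Federer's area formula (note that $T=\nabla u$ is not globally Lipschitz, so one must first restrict to pieces where it is, or invoke the version of the area formula for approximately differentiable maps), and essential injectivity of $\nabla u$ via the a.e.\ differentiability of the conjugate $u^*$ --- is exactly the set of techniques the paper itself deploys in Steps 1--2 of the proof of Theorem \ref{thm:regularity transport}.
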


A remark which will be useful later is the following: the cost $|x-y|^2/2$
is equivalent to the cost $-x\cdot y$. Indeed, for any transport map $S$ we have
$$
\int_{\R^n}\frac{|S(x)|^2}2\,d\mu(x)=\int_{\R^n}\frac{|y|^2}2\,d\nu(y)
$$
(this is a direct consequence of the condition $S_\# \mu=\nu$), hence
\begin{align*}
\int_{\R^n}\frac{|x-S(x)|^2}2\,d\mu(x)&=\int_{\R^n}\frac{|x|^2}2\,d\mu(x)
+ \int_{\R^n}\frac{|S(x)|^2}2\,d\mu(x) + \int_{\R^n}\bigl(-x\cdot S(x) \bigr)\,d\mu(x)\\
&=\int_{\R^n}\frac{|x|^2}2\,d\mu(x)
+ \int_{\R^n}\frac{|y|^2}2\,d\nu(y) + \int_{\R^n}\bigl(-x\cdot S(x) \bigr)\,d\mu(x)
\end{align*}
and since the first two integrals in the right hand side are independent of $S$
we see that the two minimization problems
$$
 \min_{S_\# \mu=\nu}  \int_{\R^n}\frac{|x-S(x)|^2}2\,d\mu(x)
\qquad \text{and}\qquad 
 \min_{S_\# \mu=\nu} \int_{\R^n}\bigl(-x\cdot S(x) \bigr)\,d\mu(x)
$$
are equivalent.

\medskip    \subsection{Regularity theory for the quadratic cost: Brenier vs Alexandrov solutions}
\label{sect:Bre Alex}

Before starting the discussion of the regularity of optimal transport maps, let us recall some fact about second order properties of convex functions (see for instance \cite[Theorem 14.25]{Vil}).

\begin{theorem}[Alexandrov]
\label{thm:Alex convex}
Let \(\Omega\) be a convex open set and let \(u:\Omega \to \R\) be a convex function. Then, for a.e. \(x\) in \(\Omega\), $u$ is differentiable at $x$ and there exists symmetric matrix \(D^2 u(x)\) such that
\[
u(y)=u(x)+\nabla u (x)\cdot (y-x)+\frac 1 2 D^2 u(x)(y-x)\cdot (y-x)+o(|y-x|^2).
\]
In addition, at such points $\nabla u$ is differentiable with gradient given by $D^2u$, that is
$$
\nabla u(y)=\nabla u(x)+D^2u(x)\cdot (y-x) +o(|y-x|)\qquad \forall\,y \in {\rm Dom} (\nabla u),
$$
where ${\rm Dom} (\nabla u)$ is the set of differentiability points of $u$.
\end{theorem}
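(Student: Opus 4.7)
The plan is to build up the pointwise second-order expansion from measure-theoretic properties of the distributional Hessian, combined with a monotonicity argument that upgrades weak/approximate information to classical pointwise information. First I would localize: since $u$ is convex, it is locally Lipschitz, so I may restrict to a compactly contained open subset and assume $u$ is Lipschitz and bounded. The first key observation is that the distributional Hessian $D^2u$ is a symmetric matrix-valued Radon measure with values in the positive semidefinite cone. This is obtained by mollification: $u_\varepsilon := u\ast\rho_\varepsilon$ is smooth and convex, hence $D^2u_\varepsilon\ge 0$ pointwise, and passing to the weak-$\ast$ limit as $\varepsilon\to 0$ shows that $\partial_{\xi\xi}u\ge 0$ as a Radon measure for every direction $\xi\in\R^n$. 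I then Lebesgue-decompose $D^2u=A(x)\,dx+\mu^s$, where $A$ is a measurable symmetric non-negative matrix field and $\mu^s$ is singular with respect to Lebesgue measure.

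Next I would isolate a large set of good points. Because $\nabla u\in L^\infty_{\rm loc}$ and its distributional derivative $D(\nabla u)=D^2u$ is a locally finite Radon measure, $\nabla u$ is a $BV_{\rm loc}$ vector field. The fine properties of $BV$ functions (the Calder\'on--Zygmund differentiation theorem for $BV$ maps) guarantee that $\nabla u$ is approximately differentiable almost everywhere, and at almost every point its approximate gradient coincides with the Radon-Nikodym density $A(x)$. Combining this with Rademacher's theorem for the Lipschitz function $u$, and the Lebesgue--Besicovitch theorem for the matrix field $A$ and the singular measure $\mu^s$, I conclude that almost every $x_0\in\Omega$ is \emph{good} in the sense that: (i) $u$ is classically differentiable at $x_0$; (ii) $x_0$ is a Lebesgue point of $\nabla u$ and of $A$; (iii) $|\mu^s|(B_r(x_0))/|B_r(x_0)|\to 0$ as $r\to 0$; and (iv) $\nabla u$ is approximately differentiable at $x_0$ with symmetric approximate gradient $A(x_0)$.

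The crux is to promote the approximate differentiability of $\nabla u$ at a good point $x_0$ to honest pointwise differentiability. This is where convexity is used in an essential way through the fact that the (single-valued, a.e.-defined) gradient map is monotone: $(\nabla u(x)-\nabla u(y))\cdot (x-y)\ge 0$ whenever both gradients exist. The monotonicity upgrade lemma I have in mind is: if a monotone map $F:\Omega\to\R^n$ is approximately differentiable at $x_0$ with symmetric approximate gradient $L$, then $F$ is continuous at $x_0$ and differentiable there in the classical sense with gradient $L$. The idea is that, given any candidate value $F(y_0)$ with $y_0$ near $x_0$, one can select good nearby points $y_\pm$ in suitable directions (of which there are arbitrarily many by (iv)) and play the two monotonicity inequalities $(F(y_+)-F(y_0))\cdot(y_+-y_0)\ge 0$ and $(F(y_0)-F(y_-))\cdot(y_0-y_-)\ge 0$ off against the affine approximation $L(y-x_0)+\nabla u(x_0)$, squeezing $F(y_0)$ into an $o(|y_0-x_0|)$ neighborhood of $L(y_0-x_0)+\nabla u(x_0)$. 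Applying this lemma to $F=\nabla u$ at $x_0$ yields the classical first-order expansion of $\nabla u$, and the second-order expansion of $u$ then follows by integrating along rays, using $u(x_0+h)-u(x_0)=\int_0^1\nabla u(x_0+th)\cdot h\,dt$, which is valid since $u$ is Lipschitz along each segment and $\nabla u$ has a pointwise linear expansion on a full-measure set.

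The main obstacle is the monotonicity upgrade step, which transforms an integral/density statement into a genuinely pointwise one: approximate differentiability and the density-zero property of $\mu^s$ only control $\nabla u$ on a set of nearly full measure, and without monotonicity there is no reason for the values on the complementary small set to be controlled at all. Everything else, namely the construction of $D^2u$ as a non-negative matrix-valued measure, its Lebesgue decomposition, the verification of the good-point properties via $BV$ theory, and the passage from first-order differentiability of $\nabla u$ to the Taylor expansion of $u$, is essentially bookkeeping with standard tools.
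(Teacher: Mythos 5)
The paper itself gives no proof of this statement: it is quoted as a classical result with a pointer to \cite[Theorem 14.25]{Vil}, so there is no in-text argument to compare yours against. Your outline is a correct and standard route (essentially the $BV$/monotone-operator proof, as in Alberti--Ambrosio or Ambrosio--Fusco--Pallara): the distributional Hessian of a convex function is a nonnegative, symmetric, matrix-valued Radon measure; the fine properties of $BV$ functions give approximate differentiability of $\nabla u$ a.e.\ with approximate differential equal to the density $A$ of the absolutely continuous part; and the monotonicity of the (sub)gradient map is precisely the ingredient that upgrades approximate differentiability to genuine differentiability of $\nabla u$ relative to $\mathrm{Dom}(\nabla u)$. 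You correctly identify this upgrade as the crux, and the lemma you invoke is true: the two-sided monotonicity inequalities against good points chosen in enough spanning directions pin down $F(y_0)$ up to $o(|y_0-x_0|)$. An alternative classical route, closer in spirit to the cited reference, applies Rademacher's theorem to the globally $1$-Lipschitz map $(\mathrm{Id}+\partial u)^{-1}$ and inverts the differential at points where it is nondegenerate; your version trades that for $BV$ structure theory, which buys a cleaner identification of $D^2u(x)$ with the Radon--Nikodym density of the Hessian measure.

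Two small points deserve care in a full write-up. First, the upgrade lemma should be stated for the full subdifferential, i.e.\ every $p\in\partial u(y)$ satisfies $p=\nabla u(x_0)+A(x_0)(y-x_0)+o(|y-x_0|)$, not only the single-valued restriction to $\mathrm{Dom}(\nabla u)$; the same monotonicity inequality $(p-\nabla u(w))\cdot(y-w)\ge 0$ proves this stronger form at no extra cost, and it is needed for the next step. Second, the identity $u(x_0+h)-u(x_0)=\int_0^1\nabla u(x_0+th)\cdot h\,dt$ is problematic as written: a fixed segment may meet $\Omega\setminus \mathrm{Dom}(\nabla u)$ in a set of positive (even full) one-dimensional measure (take $u(x)=|x_1|$ and a segment along the $x_2$-axis), so the fact that $\mathrm{Dom}(\nabla u)$ has full measure in $\Omega$ does not justify the formula for every $h$. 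The repair is immediate from convexity: $t\mapsto u(x_0+th)$ is convex, hence differentiable off a countable set with derivative $p_t\cdot h$ for any $p_t\in\partial u(x_0+th)$, and the subdifferential form of the upgrade then gives the second-order expansion with a uniform error; alternatively, establish the expansion for a.e.\ $y$ with a uniform $o(\cdot)$ and extend to all $y$ by continuity of $u$. With these adjustments the argument is complete.
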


Clearly any convex function admits also a distributional Hessian \(D_\DD^2 u\). Recalling that a positive distribution is a measure, it is simple
to show that \(D_\DD^2u\) is a matrix valued measure \cite[Chapter 6]{EG}. Then one can show that the ``pointwise''  Hessian \(D^2 u\) defined in the Alexandrov theorem is actually the density of the absolutely continuous part of \(D_\DD^2u\) with respect to the Lebesgue measure, i.e.,
$$
D_\DD^2 u=D^2 u\,dx+(D_\DD^2 u)^s.
$$
\smallskip

Let $X$ and $Y$ be two bounded smooth open sets in $\R^n$, and
let $\mu(dx)=f(x)dx$ and $\nu(y)=g(y)dy$ be two probability measures
with $f$ and $g$ such that $f=0$ in $\R^2\setminus X$ and $g=0$ in
$\R^2\setminus Y$. We assume that $f$ and $g$ are
bounded away from zero and infinity on $X$ and $Y$,
respectively. By Brenier's Theorem, when the cost is given by $|x-y|^2/2$ then
the optimal transport map $T$
is the gradient of a convex function $u$. Hence, the Jacobian equation \eqref{eq:jacbr}
combined with the fact that $\nabla u$ is differentiable a.e. (see Theorem \ref{thm:Alex convex})
gives that $u$ solves the Monge-Amp\`ere equation
\begin{equation}
\label{eq:MA}
\det(D^2u(x))=\frac{f(x)}{g(\n u(x))} \qquad f\,dx\text{-a.e.}
\end{equation}
coupled with the ``boundary condition''
\begin{equation}
\label{eq:MA bdry}
\n u(X)=Y
\end{equation}
(which corresponds to the fact that $T$ transports $f(x)dx$ onto
$g(y)dy$).
We will say that the function $u$ is a \emph{Brenier solution} of the Monge-Amp\`ere equation.\\

As observed by Caffarelli \cite{Caf3}, even for smooth densities
one cannot expect any general regularity result for $u$ without
making some geometric assumptions on the support of the target
measure. Indeed, let $n=2$ and suppose that $X=B_1$ is the unit ball centered
at the origin and $Y=\bigl(B_1^+ + e_1\bigr) \cup \bigl(B_1^- -
e_1\bigr)$ is the union of two half-balls (here
$(e_1,e_2)$ denote the canonical basis of $\R^2$), where
$$
B_1^+:=\bigl(B_1 \cap \{x_1
>0\}\bigr),\qquad B_1^-:=\bigl(B_1 \cap \{x_1 <0\}\bigr).
$$
Then, if $f=\frac{1}{|X|}\mathbf 1_X$ and $g=\frac{1}{|Y|}\mathbf 1_Y$, it
is easily seen that the optimal map $T$ is given by
$$
T(x):=\left\{ \begin{array}{ll} x+e_1& \text{if } x_1 >0,\\
x-e_1 &\text{if }x_1<0,\end{array}\right.
$$
which corresponds to the gradient of the convex function
$u(x)= |x|^2/2+|x_1|$.

Thus, as one could also show by an easy topological argument, in
order to hope for a regularity result for $u$ we need at least
to assume the connectedness of $Y$. But, starting from the above
construction and considering a sequence of domains $X_\e'$ where
one adds a small strip of width $\e>0$ to glue together
$\bigl(B_1^+ + e_2\bigr) \cup \bigl(B_1^- - e_2\bigr)$, one can
also show that for $\e>0$ small enough the optimal map will still
be discontinuous (see \cite{Caf3}).\\

The reason for this lack of regularity is the following: 
For Alexandrov solutions the multivalued map $x \mapsto \partial u(x)$
preserves the Lebesgue measure up to multiplicative constants, that is, the volumes
of $E$ and $\partial u(E)$ are comparable for any Borel set $E\subset X$.

On the other hand, for Brenier solutions, \eqref{eq:MA} gives the same kind of information but only at points where $u$
is twice differentiable, so \eqref{eq:MA} may miss some singular part in the Monge-Amp\`ere measure.
This comes from the fact that the optimal map can only see the regions where $f$ and $g$ live: roughly speaking, for Brenier solutions we only have
$$
|E|\simeq |\partial u(E)\cap Y| \qquad \forall\, E \subset X
$$
(and not $|E|\simeq |\partial u(E)|$ as in the Alexandrov case),
which means that we do not  have a full control on the Monge-Amp\`ere measure of \(u\).
\smallskip

In the counterexample above what happens is the following:
when $x=(0,x_2)$ then $\partial u(x)=[-1,1]\times \{x_2\}$, hence $\partial u$ maps the segment
$\{0\}\times [-1,1]$
onto the square $[-1,1]^2$, but in the latter square $g$ has no mass.
Notice that in this case the determinant of $D^2u$ is equal to $1$ a.e. inside $B_1$ (so $u$ is a Brenier
solution with right hand side $1$) but the Monge-Amp\`ere measure of $u$ is equal to
$dx\res B_1+\mathcal H^{1}\res (\{x_{1}=0\}\cap B_1)$
 (compare with Example \ref{ex:disc}).
\smallskip

Hence, in order to avoid this kind of counterexample one should make sure that the target mass always 
cover the image of $\partial u(X)$, and a way to ensure this is that $Y$ is convex.
Indeed, as shown by Caffarelli \cite{Caf3}, if $Y$ is convex then $\partial u(X)\subset \overline Y$
(see the proof of Theorem \ref{thm:regularity transport} below) and any Brenier solution is an Alexandrov solution.
In particular the regularity results from Section \ref{sect:Caff Alex}
apply whenever $f$ and $g$ are strictly positive on their respective support
(since, in that case, the right hand side in \eqref{eq:MA} is bounded away from zero and infinity)
and we have the following (see also \cite{Caf3,Caf4}):
\begin{theorem}
\label{thm:regularity transport}
Let $X,Y\subset \R^n$ be two bounded open sets, let
$f:X\to \R^+$ and $g:Y\to \R^+$ be two
probability densities, respectively 
bounded away from zero and infinity on $X$ and $Y$,
and denote by $T=\nabla u:X\to Y$ the unique optimal transport map sending $f$ onto $g$
for the cost $|x-y|^2/2$. Assume that $Y$ is convex.
Then:
\begin{enumerate}
\item[-]
$T \in C^{0,\alpha}_{\rm loc}(X)\cap W^{1,1+\e}_{\rm loc}(X)$.
\item[-] If in addition $f \in C^{k,\beta}_{\rm loc}(X)$ and $g\in C^{k,\beta}_{\rm loc}(Y)$ for some $\beta \in (0,1)$, then
$T \in C^{k+1,\beta}_{\rm loc}(X)$.
\item[-] Furthermore, if $f \in C^{k,\beta}(\overline X)$, $g\in C^{k,\beta}(\overline Y)$, and both $X$ and $Y$ are smooth and uniformly convex, then $T:\overline X\to\overline Y$ is a global diffeomorphism of class $C^{k+1,\beta}$.
\end{enumerate}
\end{theorem}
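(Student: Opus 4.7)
The plan is to reduce the regularity of the transport map $T=\nabla u$ to the Alexandrov theory developed in Section \ref{sect:Caff Alex}. The first step is to show that $\partial u(X)\subset \overline Y$ when $Y$ is convex. Since $T=\nabla u$ pushes $f\,dx$ onto $g\,dy$ and $g$ vanishes outside $Y$, we have $\nabla u(x)\in \overline Y$ for a.e.\ $x\in X$. For an arbitrary $x_0\in X$ and $p\in \partial u(x_0)$, the maximal monotonicity of $\partial u$ implies that $p$ lies in the closed convex hull of limits $\lim_k \nabla u(x_k)$ along sequences $x_k\to x_0$ at which $u$ is differentiable. Since $\overline Y$ is convex, this convex hull is contained in $\overline Y$, yielding the claimed inclusion. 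With this in hand, for any Borel set $E\subset X$ the identity $\nu(\partial u(E))=\mu(\nabla u^{-1}(\partial u(E)))$ (which follows from $T_\sharp\mu=\nu$ together with the inclusion above) combined with the a.e.\ differentiability of $u$ (Theorem \ref{thm:Alex convex}) and \eqref{eq:jacbr} gives $\mu_u(E)=\int_E f/g(\nabla u)\,dx$. Thus $u$ is an Alexandrov solution of $\det D^2u=f/g(\nabla u)$ with a right-hand side that is bounded between two positive constants depending only on the bounds on $f,g$.

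The second, most delicate step is to prove that $u$ is strictly convex inside $X$. Here the convexity of $Y$ enters again through Proposition \ref{ch2:thm:contact}. If $u$ coincided with an affine function $\ell$ on a set $W$ containing more than one point with some $\bar x\in W\cap X$, then Proposition \ref{ch2:thm:contact} would force $W$ to have no extremal points in $X$, so $W$ would contain a nontrivial segment $\Sigma$. Along such a segment the gradient of $u$ is the constant vector $p_0=\nabla\ell$, which lies in $\overline Y$. Exploiting the convexity of $Y$, one then shows that near the relative endpoints of $W$ the subdifferential of $u$ necessarily probes points of $\partial \overline Y$ in a direction forbidden by $\partial u(X)\subset \overline Y$ and by the mass balance $\det D^2u\ge \lambda'$; this is Caffarelli's classical argument. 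The conclusion is that $W$ can only meet $X$ at a single point, i.e., $u$ is strictly convex in $X$.

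With strict convexity secured, the first bullet of the theorem follows by directly applying, on sections $S(x,p,t)\Subset X$ renormalized by the affine map provided by John's lemma (Lemma \ref{lem:john}), the interior $C^{1,\alpha}$ estimate of Theorem \ref{thm:calfa} and the interior $W^{2,1+\e}$ estimate of Theorem \ref{w21eps}. For the second bullet, once $u\in C^{1,\alpha}$ and strictly convex, the right-hand side $f(x)/g(\nabla u(x))$ inherits $C^{0,\beta'}$ regularity for $\beta'=\min\{\alpha,\beta\}$, so Caffarelli's $C^{2,\alpha}$ estimate (the sharper version of Theorem \ref{thm:pogsmooth}) yields $u\in C^{2,\beta'}_{\mathrm{loc}}$. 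The linearized Monge--Amp\`ere operator from \eqref{linearized} then has $C^{0,\beta'}$ coefficients and is uniformly elliptic, so differentiating the equation and applying classical Schauder theory \cite[Chapter 6]{GT} in the spirit of Step~2 of the proof of Theorem \ref{po2} gives a bootstrap producing $T=\nabla u\in C^{k+1,\beta}_{\mathrm{loc}}(X)$.

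The third bullet, i.e.\ the global diffeomorphism statement under $f\in C^{k,\beta}(\overline X)$, $g\in C^{k,\beta}(\overline Y)$, and smooth uniform convexity of both $X$ and $Y$, requires matching boundary regularity for Alexandrov solutions: this is Caffarelli's global $C^{2,\beta}$ theory (and its higher-order continuation via the continuity method of Section \ref{sect:continuity} and the boundary estimates of Ivochkina--Krylov--Caffarelli--Nirenberg--Spruck recalled in the introduction), together with the remark that $T:\overline X\to\overline Y$ is a bijection (by optimality and uniqueness) whose Jacobian is positive by \eqref{eq:jacbr}, hence a diffeomorphism by the inverse function theorem. The principal obstacle in the whole argument is the strict convexity step: without the convexity of the target $Y$, the Pogorelov example \eqref{eq:example pogorelov} shows that $u$ may fail to be $C^2$, and even $C^1$ regularity can break down as illustrated by Caffarelli's two-half-balls counterexample. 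Everything else is a (by now standard) application of the Alexandrov interior and boundary regularity theory to a problem in which strict convexity has been established.
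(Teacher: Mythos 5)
Your proposal follows essentially the same route as the paper: establish $\partial u(X)\subset\overline Y$ via the convex-hull representation of the subdifferential together with the convexity of $Y$, deduce that $u$ is an Alexandrov solution with right-hand side bounded away from zero and infinity, invoke strict convexity (which the paper itself only cites from Caffarelli), and then apply the interior and global regularity theory of Section~\ref{sect:Caff Alex} on renormalized sections. The one place you compress the paper's argument is the identity $\mu_u(E)=\int_E f/g(\nabla u)\,dx$, where one must also verify that $(\nabla u)^{-1}\bigl(\partial u(E)\bigr)\setminus E$ has measure zero (it is contained in the preimage of the non-differentiability set of the conjugate $u^*$); this is correct but worth making explicit.
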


\begin{proof}[Sketch of the proof] As we explained, at least for the interior regularity, the key step in the proof is to show that when \(Y\) is convex Brenier solution are Alexandrov solutions. Let us briefly sketch the proof of this fact.\\

\noindent
\emph{Step 1: For any set \(A\subset X\) it holds
\[
\mu_u(A)\ge \int_A \frac {f(x)}{g(\nabla u(x)} \,dx.
\]}
This is a general fact that does not need the convexity of \(Y\). Indeed, for any set \(A\subset X\),
\[
\partial u(A)\supset \nabla u \big(A\cap {\rm Dom} (\nabla u)\big).
\]
Then, since by Alexandrov Theorem \(\nabla u\) is differentiable almost everywhere, by the Area Formula  \cite[Corollary 3.2.20]{Federer} and \eqref{eq:jacbr} we get
\[
\mu_u (A)=|\partial u (A)|\ge \big|\nabla u \big(A\cap {\rm Dom} (\nabla u)\big)\big|=\int_A \det D^2 u\,dx=\int_A \frac {f(x)}{g(\nabla u(x)} \,dx.
\]
\\

\noindent
\emph{Step 2:  If \(\partial u(A) \subset Y \) up to a set of measure zero,  then 
\[
\mu_u (A)=\int_A \frac {f(x)}{g(\nabla u(x)} \,dx.
\]}
To see this notice that, for all \(A\subset X\),
\[
A\cap {\rm Dom} (\nabla u) \subset (\nabla u)^{-1}(\p u(A)) 
\]
and 
\[
\begin{split}
(\nabla u)^{-1}&\big(\p u (A)\cap Y\big)\setminus A \\
 &\subset(\nabla u)^{-1}\Big(  \big\{y \in Y\,:\, \text{there exist \(x_1,x_2\), \(x_1\ne x_2\) such that \(y\in \p u(x_1)\cap \p u(x_2)\)}\big\}\Big)\\
&\subset (\nabla u)^{-1} \Big(\big\{\textrm{Points of non-differentiability of \(u^*\)}\big\}\cap Y\Big)
\end{split}
\]
where \(u^*\) is the convex conjugate of \(u\), see for instance \cite[Lemma 1.1.12]{G}.  Since any convex function is differentiable almost everywhere, by our assumptions on the densities and the transport relation \((\nabla u)_{\sharp}(f dx)=g dy\) we infer that
\[
\big| (\nabla u)^{-1}\big(\p u (A)\cap Y\big)\cap (X \setminus A)\big|=0.
  \] 
Since \(\partial u(A) \subset Y\) a.e.  and \(f\) vanishes outside \(X\), using again that \((\nabla u)_{\sharp}(f dx)=g dy\) we get
\[
\begin{split}
|\p u (A)|&= \int_{\p u(A) \cap Y}\frac{g(y)}{g(y)}\,dy=\int_{(\nabla u)^{-1}(\p u (A)) }\frac{f(x)}{g(\nabla  u (x))}\,dx\\
&= \int_{A }\frac{f(x)}{g(\nabla  u (x))}\,dx+\int_{(\nabla u)^{-1}(\p u (A))\setminus A  }\frac{f(x)}{g(\nabla  u (x))}\,dx=  \int_{A }\frac{f(x)}{g(\nabla  u (x))}\,dx.\\
\end{split}
\]
\\

\noindent
{\em Step 3: $\partial u(X)\subset Y$.} Recall that, as a general fact for convex functions,
  \begin{equation}\label{singapore}
  \partial  u (x)={\rm Conv}\Big(\big\{\textrm{\(p\) \,:\, there exist \(x_k\in {\rm Dom}(\nabla u)\) with \(x_k \to x\) and \(\nabla u(x_k)\to p\)}\big\}\Big), 
\end{equation}
see \cite[Theorem 3.3.6]{CanSin}. Since \((\nabla u)_\sharp(f\,dx)=g\,dy\), the set of \(x\in X\cap {\rm Dom} (\nabla u )\) such that \(\nabla u(x)\in Y\) is of full measure in \(X\), in particular it is dense. From this,  \eqref{singapore}, and the convexity of $Y$, one immediately deduces that
\[
\partial u (X)\subset \overline{ {\rm Conv}(Y)}=\overline Y.
\]
Since the measure of \(\partial Y\) is zero, using  the previous step  we conclude that \(u\) is an Alexandrov solution of
\[
\det D^2 u = \frac {f}{g\circ\nabla u}.
\]
Finally, to apply the results from Section \ref{sect:Caff Alex} one has then to show that \(u\) is strictly convex inside \(X\), see \cite{Caf3} for more details.
\end{proof}

A natural question is what happens when one removes the convexity assumption on the target.
As shown in \cite{FK-partial} (see also \cite{F-partial} for a more precise description of the singular set in two dimensions),
in this case one can prove that the optimal trasport map is smooth outside
a closed set of measure zero. More precisely, the following holds:

\begin{theorem}
\label{thm:partialMA}
Let $X,Y\subset \R^n$ be two bounded open sets, let
$f:X\to \R^+$ and $g:Y\to \R^+$ be two
probability densities, respectively 
bounded away from zero and infinity on $X$ and $Y$,
and denote by $T=\nabla u:X\to Y$ the unique optimal transport map sending $f$ onto $g$
for the cost $|x-y|^2/2$.
Then there exist two relatively closed sets $\Sigma_X \subset X,\Sigma_Y\subset Y$ of measure zero such that
$T:X\setminus \Sigma_X \to Y\setminus \Sigma_Y$ is a homeomorphism of class $C_{\rm loc}^{0,\alpha}$ for some $\alpha>0$.
In addition, if $c\in C^{k+2,\alpha}_{\rm loc}(X\times Y)$, $f\in C^{k,\alpha}_{\rm loc}(X)$, and $g\in C^{k,\alpha}_{\rm loc}(Y)$
for some $k \geq 0$ and $\alpha \in (0,1)$, then $T:X\setminus \Sigma_X\to Y\setminus \Sigma_Y$ is a
diffeomorphism of class $C_{\rm loc}^{k+1,\alpha}$.
\end{theorem}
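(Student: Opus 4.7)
The plan is to reduce, at almost every point, the non-convex target problem to a local problem with convex target where Theorem~\ref{thm:regularity transport} applies, and to identify precisely the bad set $\Sigma_X$ where the reduction fails. Let $u^*$ be the Legendre conjugate of $u$; applying Theorem~\ref{thm:Brenier} symmetrically (with the roles of $\mu$ and $\nu$ exchanged), the map $T^*:=\nabla u^*$ is the optimal transport map from $\nu$ to $\mu$, and $\nabla u^*\circ\nabla u={\rm Id}$ $\mu$-a.e. I will call a point $x_0\in X$ \emph{regular} if $u$ is differentiable at $x_0$, $y_0:=\nabla u(x_0)\in Y$, and the sections
\begin{align*}
S(x_0,y_0,t)&:=\{u\le u(x_0)+y_0\cdot(\cdot-x_0)+t\},\\
S^*(y_0,x_0,t)&:=\{u^*\le u^*(y_0)+x_0\cdot(\cdot-y_0)+t\},
\end{align*}
satisfy $S(x_0,y_0,t)\Subset X$ and $S^*(y_0,x_0,t)\Subset Y$ for some $t>0$. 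Put $\Sigma_X:=X\setminus\{\text{regular points}\}$ and define $\Sigma_Y\subset Y$ symmetrically via $u^*$; both sets are relatively closed by construction.

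At a regular point $x_0$, a short computation based on the Fenchel identity $u(x)+u^*(y)\ge x\cdot y$ (equality iff $y\in\partial u(x)$), combined with the compactness of the nested sections, produces $s\in(0,t)$ such that $\nabla u$ sends every differentiability point of $u$ in $S(x_0,y_0,s)$ into $S^*(y_0,x_0,t/2)$, and analogously $\nabla u^*$ sends every differentiability point of $u^*$ in $S^*(y_0,x_0,s)$ into $S(x_0,y_0,t/2)$. Sections of convex functions are themselves convex open sets, so the restriction of the Brenier problem to these nested sub-sections is a transport problem between bounded open convex domains with densities still bounded away from $0$ and $\infty$ (the ratio $f/g(\nabla u)$ inherits the two-sided bound). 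Theorem~\ref{thm:regularity transport} then gives $T\in C^{0,\alpha}_{\rm loc}$ on a neighborhood of $x_0$, together with the $C^{k+1,\alpha}_{\rm loc}$ upgrade under the H\"older hypotheses on $f,g$ (the quadratic cost is smooth, so no extra assumption on $c$ is needed). Applying the same argument to $u^*$ gives continuity of $T^{-1}=\nabla u^*$ on $Y\setminus\Sigma_Y$, whence $T:X\setminus\Sigma_X\to Y\setminus\Sigma_Y$ is a homeomorphism of the claimed regularity.

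The main obstacle will be to prove $|\Sigma_X|=0$ (and, by symmetry, $|\Sigma_Y|=0$). Non-differentiability points of $u$ are negligible by Theorem~\ref{thm:Alex convex}, so one is reduced to differentiability points whose sections of $u$ must always touch $\partial X$, or whose dual sections always touch $\partial Y$. The plan is a Besicovitch covering of such points by sections of $u$ (respectively of $u^*$) normalized via John's Lemma~\ref{lem:john}, together with the volume comparison between a section and its $\nabla u$-image coming from the mass-transport identity $(\nabla u)_\sharp(f\,dx)=g\,dy$ and the bounds $\lambda\le f,g\le 1/\lambda$. This comparison traps the bad set inside an arbitrarily thin neighborhood of $\partial X$ or $\partial Y$, forcing it to have zero Lebesgue measure. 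The delicate point will be to carry the covering quantitatively down to arbitrarily small scales while controlling the possibly very elongated geometry of sections near the boundary, which is where John's Lemma and Caffarelli's strict-convexity result (Proposition~\ref{ch2:thm:contact}) enter the argument.
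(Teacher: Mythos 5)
Your overall strategy---localize to sections on which the image of the subdifferential stays inside $Y$, so that the convex-target theory of Theorem \ref{thm:regularity transport} applies---is the right one, but the way you set up the singular set creates two genuine gaps. First, $\Sigma_X$ is not ``relatively closed by construction'': your definition of a regular point includes differentiability of $u$ at $x_0$, and the set of differentiability points of a convex function need not be open, so the regular set is not obviously open. Second, and more seriously, by building the compact containment of both $S(x_0,y_0,t)$ in $X$ and $S^*(y_0,x_0,t)$ in $Y$ into the definition of regularity, you have turned $|\Sigma_X|=0$ into a statement about the strict convexity of $u$ and of $u^*$, which is precisely the hard part of Caffarelli's theory; you acknowledge this and only outline a Besicovitch/John's-Lemma covering scheme without carrying it out. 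That covering argument is not routine: the two-sided control $\mu_u\simeq (f/(g\circ\nabla u))\,dx$ that your volume comparison relies on is only available on sets $A$ with $\partial u(A)\subset Y$ up to measure zero (Step 2 in the proof of Theorem \ref{thm:regularity transport}); on the set you are trying to show is null, only the one-sided lower bound of Step 1 is at your disposal.

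The paper avoids both problems by choosing the singular set differently: $\Sigma_X:=\{x\in X:\ \partial u(x)\not\subset Y\}$. With this choice ${\rm Reg}_X:=X\setminus\Sigma_X$ is open by upper semicontinuity of the subdifferential ($\partial u(x_0)$ is compact and $Y$ is open), and $|\Sigma_X|=0$ is immediate: $u$ is differentiable a.e.\ (Theorem \ref{thm:Alex convex}) and $\nabla u(x)\in Y$ for a.e.\ $x$ by the push-forward condition, so $\partial u(x)=\{\nabla u(x)\}\subset Y$ a.e. Only after this does one address strict convexity: on the open set ${\rm Reg}_X$ the argument of Step 2 in the proof of Theorem \ref{thm:regularity transport} shows that $u$ is an Alexandrov solution with $\lambda^2\le\det D^2u\le 1/\lambda^2$, and Caffarelli's localization then yields strict convexity there and hence the $C^{1,\alpha}$ (resp.\ higher) estimates. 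To salvage your route you should either switch to this definition of $\Sigma_X$, or supply the missing proof that the set of points admitting no compactly contained section is Lebesgue-null---which essentially amounts to re-proving the strict convexity statement you are trying to invoke.
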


\begin{proof}[Sketch of the proof] As explained above, when $Y$ is not convex there could points
$x \in X$ such that \(\partial u(x)\nsubseteq Y\) and on these points we have no control on the Monge-Amp\`ere measure of \(u\). Let us  define \footnote{Actually, in \cite{F-partial, FK-partial} the regular set is defined in a slightly different way and it is in principle smaller. However,
the advantage of that definition is that it allows for a more refined analysis
of the singular set (see \cite{F-partial}).}
\[
{\rm Reg}_{X}:=\{x\in X:\ \partial u (x)\subset Y\}\qquad \Sigma_X:=X\setminus {\rm Reg}_{X}.
\]
By the continuity property of the subdifferential  \cite[Proposition 3.3.4]{CanSin}, it is immediate to see that \({\rm Reg}_{X}\) is open. Moreover it follows from the condition \((\nabla u)_\sharp (f\,dx)=g\, dy\) that $\nabla u(x) \in Y$ for a.e. $x \in X$, hence \(|\Sigma_X|=0\). Thus,
arguing as in Step 2 in the proof of Theorem \ref{thm:regularity transport} one can show  that \(u\) is a strictly convex Alexandrov solution on \({\rm Reg}_{X}\), so one can apply the regularity results of Section \ref{sect:classical MA}.
\end{proof}

\medskip
\subsection{The partial transport problem}

Let us remark that, in addition to the situation described in Section \ref{semigeo},
the regularity theory for optimal transport maps with right hand side only bounded away from zero and infinity appears in other situations.
An important example is provided
by the ``partial transport problem'' described here.\\

Given two densities $f$ and $g$, one
wants to transport a fraction $m \in
[0,\min\{\|f\|_{L^1},\|g\|_{L^1}\}]$ of the mass of $f$ onto $g$
minimizing the transportation cost $c(x,y)=|x-y|^2/2$. 
The existence and uniqueness of minimizers it is a very nontrivial problem.

In particular, one can notice that in
general uniqueness of minimizers does not holds: if $m \leq
\int_{\mathbb{R}^n} \min\{f, g\}$, the family of minimizers is simply given by choosing
any fraction of mass of  $\min\{f, g\}$ with total volume $m$
and just send this mass to itself with zero cost.\\

To ensure uniqueness, in \cite{cafmcc}
the authors assumed $f$ and $g$ to have disjoint supports. Under
this assumption they were able to prove (as in the classical optimal transport problem) that there exists a convex
function $u$ such that the unique minimizer is given by
$\nabla u$. This function $u$ is also shown to solve in a
weak sense a ``Monge-Amp\`ere double obstacle problem''. Then,
strengthening the disjointness assumption into the hypothesis on
the existence of a hyperplane separating the supports of the two
measures, the authors prove a semiconvexity result on the free
boundaries. Furthermore, under some classical regularity
assumptions on the measures and on their supports, local
$C^{1,\alpha}$ regularity of $u$ and on the free boundaries of
the active regions is shown.\\

In \cite{F-partial transport} the second author studied what happens if one removes the
disjointness assumption. Although minimizers are non-unique for
$m<\int_{\mathbb{R}^n} \min\{f,g\}$ (but in this case the set of
minimizers can be trivially described, as mentioned above), uniqueness holds for any
$m\geq \int_{\mathbb{R}^n} \min\{f,g\}$. Moreover, exactly as in
\cite{cafmcc}, the unique minimizer is given gradient of a convex function.

An important fact proved in \cite{F-partial transport} is that both the source and the target mass
dominate the common mass $\min\{f,g\}$ (that is, the common mass
is both source and target). This property, which has an interest
on its own, implies that the free boundaries never enter inside the intersection of the supports
of $f$ and $g$, a fact which plays a crucial role in the regularity of the
free boundaries. Indeed, it is shown that the free boundaries have
zero Lebesgue measure (under some mild assumptions on the supports
of the two densities). As a consequence of this fact, whenever the support of $g$ is assumed to be convex and
$f$ and $g$ are bounded away from zero and infinity on their
respective support,  Theorem \ref{thm:regularity transport} allows one to deduce local $C^{0,\alpha}$
regularity of the transport map, and to show that it extends to an
homeomorphism up to the boundary if both supports are assumed to
be strictly convex. Further regularity results on the free boundaries were later obtained in \cite{F-note partial transport} and \cite{Indrei}.

On the other hand, in this situation where the supports of $f$ and
$g$ can intersect, something new happens: in the classical optimal transport problem, by assuming
$C^\infty$ regularity on the density of $f$ and $g$ (together with
some convexity assumption on their supports) Theorem \ref{thm:regularity transport} ensures that
the transport map is $C^\infty$ too. In contrast with this, in \cite{F-partial transport}
the author showed that
$C^{0,\alpha}_{\rm loc}$ regularity is optimal: one
can find two $C^\infty$ densities on $\mathbb{R}$, supported on
two bounded intervals and bounded away from zero on their
supports, such that the transport map is not $C^1$.

\medskip    \subsection{The case of a general cost}\label{gcsect}

After Theorem \ref{thm:Brenier}, many researchers started to work on the
problem of showing existence of optimal maps in the case of more general
costs, both in an Euclidean setting and in the case of
Riemannian manifolds. Since at least locally any Riemannian manifold looks like \(\R^n\),
here we shall focus on the case of $\R^n$ and then in Section \ref{sect:Riem}
we will discuss some of the nontrivial issues that may arise on manifolds (for instance, due to 
the presence of the cut-locus).\\


We introduce first some conditions on the cost.
Here and in the sequel, $X$ and $Y$ denote two open subsets of $\R^n$.
\begin{enumerate}
\item[{\bf (C0)}] The cost function $c:X\times Y\to \R$ is of class $C^{4}$
with $\|c\|_{C^{4}(X\times Y)}<\infty$.
\item[{\bf (C1)}] For any $x \in X$, the map $Y \ni y \mapsto -D_x c(x, y) \in \R^n$ is injective.
\item[{\bf (C2)}] For any $y \in Y$, the map $X \ni x \mapsto -D_y c(x, y) \in \R^n$ is injective.
\item[{\bf (C3)}] $\det(D_{xy} c)(x,y) \neq 0$ for all $(x,y) \in X \times Y$.
\end{enumerate}

We now introduce the notion of \emph{$c$-convexity}, which will play an important role in all
our discussion.

A function $\psi:X \to \R\cup\{+\i\}$ is
\emph{$c$-convex} if \footnote{
The terminology comes from the fact that, in the case $c(x,y)=-x\cdot y$
(which is equivalent to $|x-y|^2/2$, see Section \ref{sect:quadratic}), $c$-convexity corresponds to the classical notion of convexity (since in that case $\psi$ is a supremum of linear functions). Analogously, the $c$-subdifferential defined in \eqref{eq:csubdiff} corresponds to the classical subdifferential for convex functions.}
\begin{equation}
\label{eq:cconvex} 
\psi(x)=\sup_{y\in Y} \bigl[\psi^c(y)-c(x,y)\bigr]\qquad
\forall\,x \in X,
\end{equation}
where $\psi^c:Y\to \R\cup \{-\infty\}$ is given by
$$
\psi^c(y):=\inf_{x\in X} \bigl[\psi(x)+c(x,y)\bigr]\qquad \forall
\,y \in Y.
$$
For a $c$-convex function $\psi$, we define its
\emph{$c$-subdifferential} at $x$ as
$$
\p_c\psi(x):=\bigl\{ y\in Y \,:\,\psi(x)=\psi^c(y)-c(x,y) \bigr\},
$$
or, equivalently,
\begin{equation}
\label{eq:csubdiff}
\p_c \psi(x) := \{y \in \overline Y\, : \ \psi(z)\geq -c(z,y)+c(x,y)+\psi(x) \quad \forall\,z \in X\}.
\end{equation}
 We also define the \textit{Fr\'echet subdifferential }of $\psi$ at $x$ as
\begin{equation}
\label{Frechet subdiff}
\p^- \psi(x) := \bigl\{p \in \R^n \,: \ \psi(z) \geq u(x)+p \cdot (z-x)+o(|z-x|)\bigr\}.
\end{equation}
It is easy to check that, if $c$ is of class $C^1$, then the following inclusion holds:
\begin{equation}
\label{eq:rel subdiff}
y \in \p_c\psi(x) \quad \Longrightarrow\quad -D_xc(x,y) \in \p^- \psi(x).
\end{equation}
 In addition, if $c$ satisfies {\bf (C0)}-{\bf (C2)}, then we can define the \textit{$c$-exponential map}:\footnote{
 The name ``$c$-exponential'' is motivated by the fact that, when $c$ is given by the squared Riemannian distance on a manifold, the $c$-exponential map coincides with the classical exponential map in Riemannian geometry.
 }
\begin{equation}
\label{eq:cexp}
\text{for any $x\in X$, $y \in Y$, $p \in \R^n$},
\qquad
\cexp_x(p)=y \quad \Leftrightarrow \quad p=-D_xc(x,y).
\end{equation}
Using \eqref{eq:cexp}, we can rewrite \eqref{eq:rel subdiff} as
\begin{equation}
\label{eq:rel subdiff2}
\p_c\psi(x) \subset \cexp_x\left(\p^- \psi(x)\right).
\end{equation}
Let us point out that, when $c(x,y)=-x\cdot y$, $\psi$ is convex and $\p_c\psi=\p \psi$, hence the above inclusion is an equality (since any
local supporting hyperplane is a global support).
However, 
for a general cost function $c$,
the above inclusion may be strict
when $\psi$ is non-differentiable.
As we will discuss in Section \ref{MTWsec}, equality in \eqref{eq:rel subdiff2} for every \(c\)-convex function  is necessary for regularity of optimal maps between smooth densities. Notice that, if $c \in C^1$ and $Y$ is bounded,
it follows immediately from \eqref{eq:cconvex} that $c$-convex functions are Lipschitz, so
in particular they are differentiable a.e.

\begin{remark}\label{rmk:semiconv}
If $c$ satisfies {\bf (C0)} and $Y$ is bounded, then 
it follows from \eqref{eq:cconvex}
that $\psi$ is semiconvex (i.e., there exists a constant $C>0$ such that $\psi+C|x|^2/2$ is convex,
see for instance \cite{FF}).
In particular, by Theorem \ref{thm:Alex convex},
$c$-convex functions are twice differentiable a.e.
\end{remark}
The following is a basic result in optimal transport theory.

\begin{theorem}\label{cbrenier}
Let $c:X\times Y \to \R$ satisfy {\bf (C0)}-{\bf (C1)}.
Given two probability densities $f$ and $g$ supported on $X$ and $Y$ respectively,
there exists a $c$-convex function $u:X\to \R$ such that the map $T_u:X\to Y$ defined by $T_u(x):= \cexp_x(\n u(x))$ is the unique optimal transport map sending $f$ onto $g$.
If in addition {\bf (C2)} holds, then $T_u$ is injective $f\,dx$-a.e.,
$$
\bigl|\det(\n T_u(x)) \bigr|=\frac{f(x)}{g(T_u(x))} \qquad \text{$f\,dx$-a.e.},
$$
and its inverse is given by the optimal
transport map sending $g$ onto $f$.
\end{theorem}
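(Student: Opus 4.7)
The plan is to follow the by-now classical Kantorovich-relaxation-plus-duality strategy, using the structure conditions (C0)-(C2) to pass from plans to maps. Let $\mu=f\,dx$ and $\nu=g\,dy$. First, I would relax \eqref{eq:transport problem} to the Kantorovich problem
\[
\min_{\pi\in\Pi(\mu,\nu)}\int_{X\times Y} c(x,y)\,d\pi(x,y),
\]
where $\Pi(\mu,\nu)$ is the set of probability measures on $X\times Y$ with marginals $\mu$ and $\nu$. Since $c$ is bounded and continuous by (C0) and $X,Y$ are fixed, $\Pi(\mu,\nu)$ is weakly$^*$ compact and the functional is continuous, so a minimizer $\pi^*$ exists. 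By Kantorovich duality there exist a $c$-convex function $u:X\to\R$ and its $c$-transform $u^c$ such that $u(x)+u^c(y)\le -c(x,y)$ on $X\times Y$ with equality on $\supp\pi^*$; equivalently, $\supp\pi^*\subset\{(x,y):y\in\partial_c u(x)\}$.

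Next, I would use (C1) and the absolute continuity of $\mu$ to promote $\pi^*$ to a map. By Remark \ref{rmk:semiconv} the $c$-convex function $u$ is semiconvex, hence differentiable $\mu$-a.e. by Theorem \ref{thm:Alex convex}. On the full-measure set $\{x\in X:\nabla u(x)\text{ exists}\}$, relation \eqref{eq:rel subdiff2} combined with (C1) gives that $\partial_c u(x)$ consists of the single point $\cexp_x(\nabla u(x))=T_u(x)$. Hence $\pi^*$ is concentrated on the graph of $T_u$, so $T_u$ is a minimizer for \eqref{eq:transport problem}. Uniqueness of the optimal map would then follow by observing that any other optimal map $S$ produces another optimal plan $(\mathrm{Id}\times S)_\sharp\mu$; taking the half-sum with $(\mathrm{Id}\times T_u)_\sharp\mu$ yields an optimal plan whose support must lie in $\partial_c \tilde u$ for some $c$-convex $\tilde u$, and single-valuedness of $\partial_c\tilde u$ at a.e.\ $x$ forces $S=T_u$ $\mu$-a.e.

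Under the additional assumption (C2), the roles of $X$ and $Y$ are symmetric. Repeating the above argument with $g$ and $f$ interchanged (using the same optimal plan $\pi^*$, now viewed from the $Y$-side) yields a $c$-convex potential $v:Y\to\R$ and an optimal map $S_v(y)=\cstarexp_y(\nabla v(y))$ sending $\nu$ to $\mu$, and the relations $\supp\pi^*=\{(T_u(x),x):x\in X\}=\{(y,S_v(y)):y\in Y\}$ up to $\pi^*$-null sets force $S_v\circ T_u=\mathrm{Id}$ $\mu$-a.e.\ and $T_u\circ S_v=\mathrm{Id}$ $\nu$-a.e. In particular $T_u$ is injective $\mu$-a.e. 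Finally, since $u$ is twice differentiable $\mu$-a.e., $T_u$ is differentiable $\mu$-a.e.; applying the Area Formula exactly as in Step 1 of the proof of Theorem \ref{thm:regularity transport} to the map $T_u$, the identity $(T_u)_\sharp\mu=\nu$ translates into the Jacobian equation
\[
\bigl|\det(\nabla T_u(x))\bigr|=\frac{f(x)}{g(T_u(x))}\qquad f\,dx\text{-a.e.}
\]

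The main obstacle is the passage from the optimal plan to a map: everything hinges on showing that $\partial_c u(x)$ is a singleton at a.e.\ $x$, which requires both the semiconvexity given by (C0) (to get differentiability a.e.\ of $u$) and the injectivity in (C1) of $y\mapsto -D_xc(x,y)$ (to recover $y$ from the unique vector $-D_xc(x,y)\in\partial^- u(x)$). Without (C1) one only gets the weaker inclusion \eqref{eq:rel subdiff2}, and no map. The rest of the proof (existence/duality, Jacobian identity, and symmetric inversion) is essentially bookkeeping once this single-valuedness is established.
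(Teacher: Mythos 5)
Your proof is correct and follows essentially the same route as the paper: relax to the Kantorovich problem, produce a $c$-convex potential whose $c$-subdifferential contains $\supp\pi^*$, and use semiconvexity/a.e.\ differentiability together with {\bf (C1)} to conclude that $\pi^*$ is concentrated on the graph of $T_u(x)=\cexp_x(\nabla u(x))$, with uniqueness by the half-sum argument. The only cosmetic differences are that you obtain the potential via the duality theorem where the paper uses $c$-cyclical monotonicity plus the Rockafellar--R\"uschendorf construction, and that you spell out the symmetric argument and the area-formula derivation of the Jacobian identity under {\bf (C2)}, which the paper delegates to a reference.
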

Notice that, in the particular case $c(x,y)=-x\cdot y$ (which is equivalent to the quadratic cost $|x-y|^2/2$, see Section \ref{sect:quadratic}), $c$-convex functions
are convex and the above result corresponds to Theorem \ref{thm:Brenier}.
We give here a sketch of the proof of Theorem \ref{cbrenier}, referring to \cite[Chapter 10]{Vil}
for more details.

\begin{proof}[Sketch of the proof]
There are several ways to establish this result. One possibility is to go through the
following steps:\\

\noindent
\textit{Step 1: Solve the Kantorovich problem.} Following Kantorovich \cite{kant1,kant2}, 
we consider the following relaxed problem:
instead of minimizing the transportation cost among all transport maps (see \eqref{eq:transport problem}), we consider instead the problem
\begin{equation}
\label{kantprob}
\inf_{\pi \in \Pi(f,g)} \left\{ \int_{X \times Y} c(x,y)\,d\pi(x,y) \right\},
\end{equation}
where $\Pi(f,g)$ denotes the set of all probability measures $\pi$ on $X\times Y$
whose marginals are $f\,dx$ and $g\,dy$, i.e.,
$$
\int_{X \times Y} h(x) \,d\pi(x,y)=\int_X h(x)f(x)\,dx, \qquad \int_{X \times Y}
h(y) \,d\pi(x,y)=\int_Y h(y)g(y)\,dy,
$$
for all $h:M \to \R$ bounded continuous.

The connection between the formulation of Kantorovich and that of Monge is the following: any transport map
$T$ induces the plan defined by $(\operatorname{Id} \times T)_\# \mu$ which is concentrated on
the graph of $T$.
Conversely, if a transport plan is concentrated on the graph of a measurable function $T$,
then it is induced by this map.

By weak compactness of the set $\Pi(f,g)$ and continuity of the function
$\pi \mapsto \int c(x,y)\,d\pi$, it is simple to show
the existence of a minimizer $\bar\pi$ for \eqref{kantprob};
so to prove the existence of a solution to the Monge problem it suffices to show that $\bar\pi$ is
concentrated on the graph of a measurable map $T$, i.e.,
$$
y=T(x) \qquad \text{for $\bar\pi$-almost every $(x,y)$}.
$$
Once this fact is proved, the uniqueness of optimal maps will follow from the observation that,
if $T_1$ and $T_2$ are optimal, then $\pi_1:=(\operatorname{Id} \times T_1)_\# \mu$
and $\pi_2:=(\operatorname{Id} \times T_2)_\# \mu$ are both optimal plans, so
by linearity $\bar \pi=\frac{1}{2}(\pi_1+\pi_2)$ is optimal.
If it is concentrated on a graph, this implies that $T_1$ and $T_2$ must coincide $f\,dx$-a.e.,
 proving the desired uniqueness.\\

\noindent
\textit{Step 2: The support of $\bar\pi$ is $c$-cyclically monotone.}
A set $S\subset X \times Y$ is called \textit{$c$-cyclically monotone} if, for all $N \in \N$,
for all $\{(x_i,y_i)\}_{0\leq i \leq N} \subset S$, one has
$$
\sum_{i=0}^N c(x_i,y_i) \leq \sum_{i=0}^N c(x_i,y_{i+1}),
$$
where by convention $y_{N+1}=y_0$.
The above definition heuristically means that, sending the point $x_i$ to the point $y_i$ for $i=0,\ldots,N$
is globally less expensive than sending the point $x_{i}$ to the point $y_{i+1}$.
It is therefore intuitive that, since $\bar\pi$ is optimal, its support is $c$-cyclically monotone
(see \cite{gangbomccann} or \cite[Chapter 5]{Vil} for a proof).\\

\noindent
\textit{Step 3: Any $c$-cyclically monotone set is contained in the $c$-subdifferential of a $c$-convex function.}
A proof of this fact (which is due to Rockafellar for $c(x,y)=-x\cdot y$, and R\"uschendorf for the
general case) consists in constructing explicitly a $c$-convex function which does the work:
given $S$ $c$-cyclically monotone, we define
\begin{multline*}
u(x):=\sup_{N \in \N}\sup_{\{(x_i,y_i)\}_{1\leq i \leq N} \subset S}
\Bigl\{\bigl[c(x_0,y_0)-c(x_1,y_0)\bigr]+\bigl[c(x_1,y_1)-c(x_2,y_1)\bigr]\\
+\ldots+\bigl[c(x_N,y_N)-c(x,y_N)\bigr] \Bigr\},
\end{multline*}
where $(x_0,y_0)$ is arbitrarily chosen in $S$.
It can be easily checked that, with this definition, $u$ is $c$-convex
and $S\subset  \cup_{x \in X}\bigl(\{x\}\times \p_cu(x)\bigr)$
(see for instance \cite[Proof of Theorem 5.10]{Vil}).\\

\noindent
\textit{Step 4: $\bar \pi$ is concentrated on a graph.}
Applying Steps 2 and 3, we know that the support of $\bar\pi$ is contained in the $c$-subdifferential of a $c$-convex function $ u$.
Moreover, being $u$ the supremum of the family of uniformly Lipschitz functions $c(\cdot,y)+\lambda_y$ (see \eqref{eq:cconvex}), it is Lipschitz, hence differentiable a.e.
Being the first marginal of $\bar\pi$ absolutely continuous with respect to the Lebesgue measure,
we deduce that, for $\bar \pi$-almost every $(x,y)$, $ u$ is differentiable at $x$.

Now, let us fix a point $(\bar x,\bar y) \in \supp(\pi)$ such that $ u$ is differentiable at $\bar x$. To prove that $\bar\pi$
is concentrated on a graph, it suffices to prove that $\bar y$ is uniquely determined as a function of $\bar x$.
To this aim, we observe that, since the support of $\bar \pi$ is contained in the $c$-subdifferential of $u$, we have
$\bar y \in \p_c u(\bar x)$, and this implies that the function $x \mapsto  u(x)+c(x,\bar y)$
attains a minimum at $\bar x$. Hence, being $ u$ is differentiable at $\bar x$ we obtain
$$
\n  u(\bar x)+\n_xc(\bar x,\bar y)=0.
$$
Recalling \eqref{eq:cexp} this implies that
$$
\bar y=\cexp_{\bar x}(\n u(\bar x))=:T_u(\bar x).
$$
This proves that $\bar \pi$ is concentrated on the graph of the map $T_u$, as desired.\\

This concludes the first part of the theorem. The invertibility of $T_u$ under ${\bf (C2)}$
follows by a simple argument based on the uniqueness of the minimizer of \eqref{kantprob},
see \cite[Remark 6.2.11]{AGS} for more details.
\end{proof}

In the case when $X=Y=(M,g)$ is a compact Riemannian manifold one has to face the additional difficulty that the cost function $c=d^2/2$ is not smooth everywhere.
However, by exploring the superdifferentiability properties of the squared Riemannian distance function
(i.e., the fact that, for any $y \in M$, the Fr\'echet 
subdifferential of $-d^2(\cdot,y)$ is nonempty at every point) McCann was able to extend the theorem
above the the case of compact Riemannian manifolds \cite{McC}
(see also \cite{FF} for a more general result):

\begin{theorem} \label{thm:McCann}
Let $M$ be a smooth compact Riemannian manifold, and consider the cost $c=d^2/2$,
$d$ being the Riemannian distance.
Given two probability densities $f$ and $g$ supported on $M$,
there exists a $c$-convex function $u:M\to \R\cup\{+\infty\}$ such that
 $T_u(x)=\exp_x(\nabla u(x))$  is the unique optimal transport map sending $f$ onto $g$.
\end{theorem}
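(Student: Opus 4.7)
The plan is to adapt the four-step strategy of Theorem \ref{cbrenier} to the Riemannian setting. The cost $c=d^2/2$ is continuous and, by compactness of $M$, uniformly Lipschitz on $M\times M$; what fails is smoothness on the cut locus. The key additional fact that compensates for this failure is that, for every fixed $y\in M$, the function $x\mapsto c(x,y)$ is \emph{semiconcave} on $M$ with a constant independent of $y$.

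Steps 1 and 2 go through unchanged. By weak compactness of $\Pi(f,g)$ and continuity of $\pi\mapsto\int c\,d\pi$ one obtains an optimal plan $\bar\pi$, and the standard argument shows its support $S:=\supp\bar\pi$ is $c$-cyclically monotone. Step 3 also carries over: the Rockafellar--R\"uschendorf construction produces a $c$-convex function $u:M\to\R\cup\{+\infty\}$ with $S\subset\{(x,y):y\in\partial_c u(x)\}$. Since each $-c(\cdot,y)$ is semiconvex with a uniform constant, the supremum
$$u(x)=\sup_{y\in M}\bigl[u^c(y)-c(x,y)\bigr]$$
is itself semiconvex (hence locally Lipschitz) on the open set $\{u<+\infty\}$, which has full $f\,dx$-measure. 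Consequently $u$ is differentiable almost everywhere, and in particular at $\bar\pi$-a.e. point $(x,y)$.

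The main obstacle is Step 4, where the map has to be recovered in spite of the cut locus. Fix $(\bar x,\bar y)\in S$ at which $u$ is differentiable. The inclusion $\bar y\in\partial_c u(\bar x)$ means precisely that
$$z\mapsto u(z)+c(z,\bar y)$$
attains its global minimum at $z=\bar x$. Using differentiability of $u$ in a chart, this forces $-\nabla u(\bar x)$ to lie in the Fr\'echet subdifferential $\partial^- c(\cdot,\bar y)(\bar x)$. Now comes the decisive step: a basic property of semiconcave functions is that their Fr\'echet subdifferential is nonempty only at points of differentiability. For the squared Riemannian distance, differentiability of $c(\cdot,\bar y)$ at $\bar x$ is equivalent to $\bar x\notin\cut(\bar y)$, and in that case
$$\nabla_x c(\bar x,\bar y)=-\exp_{\bar x}^{-1}(\bar y).$$
Therefore $\nabla u(\bar x)=\exp_{\bar x}^{-1}(\bar y)$, i.e.\ $\bar y=\exp_{\bar x}(\nabla u(\bar x))=T_u(\bar x)$, and $\bar\pi$ is concentrated on the graph of the Borel map $T_u$. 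Uniqueness of the optimal map follows from the familiar convex combination argument: two distinct optimal maps would yield an optimal plan $\tfrac12[(\Id\times T_1)_\sharp f+(\Id\times T_2)_\sharp f]$ which by the above must again be concentrated on a single graph, forcing $T_1=T_2$ $f\,dx$-a.e.

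The hard part, as indicated, is resolving the ambiguity at the cut locus in Step 4; semiconcavity of $d^2(\cdot,y)$ together with the automatic differentiability of semiconcave functions at points of nonempty Fr\'echet subdifferential is exactly the ingredient that makes the argument close without requiring $c\in C^1(M\times M)$.
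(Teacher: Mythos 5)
Your proposal is correct and is essentially the argument the paper attributes to McCann: the only ingredient beyond the four-step scheme of Theorem \ref{cbrenier} is the uniform semiconcavity (superdifferentiability) of $d^2(\cdot,y)$, which forces $c(\cdot,\bar y)$ to be differentiable at $\bar x$ as soon as it admits a Fr\'echet subgradient there, and hence recovers $\bar y=\exp_{\bar x}(\nabla u(\bar x))$. One minor imprecision: differentiability of $d^2(\cdot,\bar y)$ at $\bar x$ is equivalent to uniqueness of the minimizing geodesic from $\bar x$ to $\bar y$, not to $\bar x\notin\cut(\bar y)$ (compare the trichotomy in Section \ref{sect:cut}), but the gradient formula $\nabla_x c(\bar x,\bar y)=-\exp_{\bar x}^{-1}(\bar y)$ and your conclusion are unaffected.
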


\medskip    \section{A class of Monge-Amp\`ere type equations}\label{MTWsec}
As shown in Theorem \ref{cbrenier}, whenever the cost function satisfies  {\bf (C0)}-{\bf (C1)}
then the unique optimal transport map sending $f$ onto $g$ is given by 
$T_u(x)=\cexp_x(\n u(x))$. Furthermore, if $c$ satisfies {\bf (C2)}, then
\begin{equation}
\label{eq:detT}
|\det(\n T_u(x))|=\frac{f(x)}{g(T_u(x))}\qquad \text{a.e.}
\end{equation}
Now, since $\{\cexp_x(\nabla u(x))\}= \partial_cu(x)$
at every differentiability point (see \eqref{eq:rel subdiff2}), it follows that
$z \mapsto  u(z)+c\bigl(z,\cexp_x(\nabla u(x))\bigr)$ attains 
a minimum at $z=x$. Hence, whenever $u$ is twice differentiable at $x$
(that is at almost   every point, see Remark \ref{rmk:semiconv}) we get
\begin{equation}
\label{eq:pos def}
D^2u(x)+D_{xx}c\bigl(x,\cexp_x(\nabla u(x))\bigr) \geq 0.
\end{equation}
Hence, rewriting the relation $T_u(x)=\cexp_x(\n u(x))$ as
$$
-D_xc(x,T_u(x))=\nabla u(x)
$$
and differentiating the above equation with respect to $x$,
 using \eqref{eq:detT} and \eqref{eq:pos def} we obtain 
\begin{equation}
\label{eq:MAcost}
\det\Bigl(D^2u(x)+D_{xx}c\bigl(x,\cexp_x(\nabla u(x))\bigr) \Bigr)=\left|\det\left(D_{xy}c\bigl(x,\cexp_x(\nabla u(x))\bigr) \right) \right| \frac{f(x)}{g(\cexp_x(\nabla u(x)))}
\end{equation}
at every point $x$ where $u$ it is twice differentiable.

Hence $u$ solves a Monge-Amp\`ere type equation of the form
\eqref{eq:MAgeneral} with
$$
\mathcal A(x,\nabla u(x)):=-D_{xx}
c\bigl(x,\cexp_x\bigl(\n u(x)\bigr)\bigr).
$$
Notice that, if {\bf (C3)} holds and $f$ and $g$ are bounded away from zero and infinity on their respective supports, then the right hand side in \eqref{eq:MAcost} is bounded away from zero
and infinity on the support of $f$.
As we will see below, some structural conditions on $\mathcal A$
(or equivalently on the cost $c$) must be imposed to obtain
regularity results.\\

The breakthrough in the study of regularity of optimal transport maps came with the paper of Ma,
Trudinger, and Wang \cite{MTW} (whose roots lie in an earlier work
of Wang on the reflector antenna problem \cite{Wan}), where the
authors found a mysterious fourth-order condition on the cost
functions, which turned out to be sufficient to prove the
regularity of $u$. The idea was to differentiate twice equation
\eqref{eq:MAcost} in order to get a linear PDE for the second
derivatives of $u$, and then to try to show an a priori
estimate on the second derivatives of $u$, compare with Theorem \ref{po1}. In this computationone obtains at a certain moment with a term which needs to have a
sign in order to conclude the desired a priori estimate. This term
is what is now called the ``Ma--Trudinger--Wang tensor'' (in short
$\MTW$ tensor):
\begin{equation}
\label{eq:MTW tensor}
\begin{split}
\Smtw_{(x,y)}(\xi,\eta)&:=
D^2_{p_\eta p_\eta}\mathcal A(x,p) [\xi,\xi]\\
&=
\sum_{i,j,k,l,p,q,r,s}\left(c_{ij,p}c^{p,q}c_{q,rs}
-c_{ij,rs} \right)c^{r,k}c^{s,l}\xi^i\xi^j \eta^k\eta^l, \qquad\xi ,\eta \in
\R^n.
\end{split}
\end{equation}
In the above formula the cost function is evaluated at $(x,y)=(x,\cexp_x(p))$,
and we used the notation $c_{j}=\frac{\p c}{\p x^j}$,
$c_{jk}=\frac{\p^2 c}{\p x^j \p x^k}$, $c_{i,j}=\frac{\p^2 c}{\p
x^i \p y^j}$, $c^{i,j}=(c_{i,j})^{-1}$.  The condition
to impose on $\Smtw_{(x,y)}(\xi,\eta)$ is
$$
\Smtw_{(x,y)}(\xi,\eta) \geq 0 \qquad \text{whenever
}\xi\perp \eta
$$
(this is called the $\MTW$ condition).
Actually, it is convenient to introduce a more general definition:
\begin{definition}
Given $K \geq 0,$ we say that $c$ satisfies the $\MTW(K)$
condition if, for all $(x,y)\in (X\times Y)$ and for all $\xi ,\eta\in \R^n$,
$$
\Smtw_{(x,y)}(\xi,\eta)\geq K|\xi|^2 |\eta|^2 \qquad
\text{whenever }\xi\perp \eta.
$$
\end{definition}
Under this hypothesis, and
a geometric condition on the supports of the measures (which is
the analogous of the convexity assumption of Caffarelli), Ma,
Trudinger, and Wang could prove the following result \cite{MTW,TW1,TW2} (see also \cite{Tr}):

\begin{theorem}
\label{thm:mtw} Let $c:X\times Y \to \R$ satisfy {\bf (C0)}-{\bf (C3)}. 
Assume that $\MTW(0)$ holds, that $f$ and $g$ are
smooth and bounded away from zero and infinity on their respective
supports $X$ and $Y$.
Also, suppose that:
\begin{enumerate}
\item[(a)] $X$ and $Y$ are smooth; \item[(b)]
$D_xc(x,Y)$ is uniformly convex for all $x
\in X$; \item[(c)] $D_yc(X,y)$ is uniformly
convex for all $y \in Y$.
\end{enumerate}
Then $u \in C^\infty(\overline X)$ and $T:\overline X \to
\overline{Y}$ is a smooth diffeomorphism, where $T(x)=\cexp_x(\n u(x))$.
\end{theorem}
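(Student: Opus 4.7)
The plan is to transplant the classical Monge--Amp\`ere strategy from Section \ref{sect:classical MA} to the fully nonlinear equation \eqref{eq:MAcost} satisfied by the $c$-convex potential $u$ produced by Theorem \ref{cbrenier}. The architecture is: (i) show that $u$ is a \emph{strictly} $c$-convex generalized solution of \eqref{eq:MAcost} whose $c$-subdifferential stays inside $\overline Y$; (ii) establish a priori $C^{2,\alpha}(\overline X)$ estimates; (iii) run a continuity method, using Evans--Krylov and Schauder, to produce a genuine smooth solution, which must coincide with $u$ by the uniqueness in Theorem \ref{cbrenier}.

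For step (i), the analog of the ``convex target'' condition is encoded in (b) and (c). Arguing as in Step 3 of the proof of Theorem \ref{thm:regularity transport}, but with the ordinary gradient replaced by $\cexp_x(\nabla u)$ (see \eqref{eq:cexp}), the uniform $c$-convexity of $D_xc(x,Y)$ forces $\partial_c u(X)\subset\overline Y$, so that the pointwise identity \eqref{eq:MAcost} accounts for the full Monge--Amp\`ere-type mass of $u$; condition (c) does the dual job for $u^c$. The strict $c$-convexity of $u$ --- the analog of Caffarelli's Proposition \ref{ch2:thm:contact} --- is where $\MTW(0)$ enters in a first essential way: Loeper's maximum principle (equivalent, under (C0)--(C3) and the geometric hypotheses, to $\MTW(0)$) forbids the contact set between $u$ and any of its $c$-supports from containing a nontrivial $c$-segment, forcing it to be a single point. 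Combined with a Loeper-type H\"older estimate on $\nabla u$, this yields $u\in C^{1,\alpha}(\overline X)$.

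The crux --- and the main obstacle --- is the a priori $C^2$ estimate, the genuine analog of Pogorelov's Theorem \ref{po1} for \eqref{eq:MAcost}. Taking logarithms in \eqref{eq:MAcost} and differentiating twice in a fixed direction $e$, one studies the auxiliary quantity $w(x):=\eta(x)\,\bigl[u_{ee}(x)+c_{ee}(x,T(x))\bigr]\,e^{M|\nabla u(x)|^2/2}$ at an interior maximum, where $T(x):=\cexp_x(\nabla u(x))$ and $\eta$ is a suitable cutoff. Applying the linearized operator $L=w^{ij}\partial_{ij}$, with $w^{ij}$ the inverse of $u_{ij}+c_{ij}(x,T(x))$, generates bad third-order terms stemming from the $x$-dependence of $c_{ij}(x,T(x))$. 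The content of the Ma--Trudinger--Wang miracle is that, after using the critical-point equation $\nabla w=0$ (which imposes an orthogonality relation between the vectors appearing), these bad terms organize themselves into expressions involving $\Smtw_{(x_0,T(x_0))}(\xi,\eta)$ with $\xi\perp\eta$; precisely here $\MTW(0)$ supplies the required sign and the argument closes, yielding $\|D^2u\|_{L^\infty(X')}\le C$ on strictly interior subdomains. The boundary $C^2$ estimate is then produced by a barrier construction patterned after the proof of Theorem \ref{po2}: conditions (a), (b), (c) play the role of uniform convexity of $\partial X$ and allow one to design barriers controlling the tangential, mixed, and normal second derivatives of $u$ at $\partial X$.

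Once $\|D^2u\|_{C^0(\overline X)}\le C$ is established, the lower bound on $\det\bigl[u_{ij}+c_{ij}(x,T)\bigr]$ coming from \eqref{eq:MAcost} together with $\lambda\le f,g\le 1/\lambda$ forces $u_{ij}+c_{ij}(x,T)\ge \Id/C$, so \eqref{eq:MAcost} becomes uniformly elliptic with $C^{0,\alpha}$ coefficients. Evans--Krylov then gives $C^{2,\alpha}(\overline X)$ bounds, and Schauder's theory bootstraps to $C^\infty(\overline X)$ via the smoothness of $c$, $f$, $g$. To convert the a priori estimates into an actual smooth solution, one runs a continuity method along a path of data $(f_t,g_t,c_t)$ connecting $(f,g,c)$ to a trivial choice for which a smooth solution is known, keeping $\MTW(0)$, (C0)--(C3), and (a)--(c) satisfied throughout: openness follows from the Implicit Function Theorem applied to the linearization (as in \eqref{linearized}), and closedness from the estimates just obtained. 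Uniqueness in Theorem \ref{cbrenier} identifies the resulting smooth solution with $u$; the smoothness and invertibility of $T=\cexp_x(\nabla u):\overline X\to\overline Y$ then follow from (C0)--(C3) and from the uniform positivity of $\det\nabla T=f/(g\circ T)$.
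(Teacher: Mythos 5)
Your architecture matches the paper's: the heart of the matter is a Pogorelov-type interior second-derivative estimate obtained by applying the linearized operator to a weighted second derivative at an interior maximum, with the $\MTW$ tensor supplying the sign of the dangerous third-order terms, followed by boundary barriers, uniform ellipticity, Evans--Krylov, Schauder, and a continuity method closed by the uniqueness in Theorem \ref{cbrenier}. (Your preliminary step on strict $c$-convexity and Loeper's $C^{1,\alpha}$ estimate is not needed in this scheme, since the continuity method works entirely with smooth solutions, but it does no harm.)

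There is, however, one genuine gap: your claim that at the maximum point ``$\MTW(0)$ supplies the required sign and the argument closes.'' It does not. After diagonalizing $(w_{ij})$ at the maximum point one arrives at an inequality of the form
\begin{equation*}
w^{ii}\bigl[c^{k,\ell}c_{ii,k}c_{\ell,st}-c_{ii,st}\bigr]c^{s,1}c^{t,1}w_{11}w_{11}\le C ,
\end{equation*}
and the $\MTW$ condition is applied to the orthogonal vectors $\xi_1=(0,\sqrt{w^{22}},\dots,\sqrt{w^{nn}})$ and $\xi_2=(w_{11},0,\dots,0)$. Under $\MTW(K)$ with $K>0$ this produces the quantitatively useful term $K\,w_{11}^2\sum_{i\ge2}w^{ii}$ on the left of \eqref{eq:MTW final}, which by the arithmetic--geometric mean inequality dominates $w_{11}^{2+1/(n-1)}$ and yields the bound. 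Under $\MTW(0)$ the same step only tells you that this term is nonnegative, so discarding it leaves $0\le C(1+w_{11})$, which gives no control on $w_{11}$ whatsoever. This is exactly why the paper's sketch explicitly restricts to $\MTW(K)$, $K>0$, and states that the case $K=0$ of the theorem requires the additional barrier/perturbation argument of Trudinger and Wang \cite{TW1}. To prove the theorem as stated (under $\MTW(0)$ only), you must either supply that extra argument or first reduce to a strict inequality; the pure maximum-principle computation you describe is insufficient.
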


\begin{proof}[Sketch of the proof]
For simplicity here we treat the simpler case when $\MTW(K)$ holds for some $K>0$.
This stronger $\MTW$
condition is actually the one originally used in \cite{MTW,TW2}.
The general case $K=0$ is treated in \cite{TW1}, where the
authors relax the stronger assumption by applying a 
barrier argument.

As we pointed out in Remark \ref{rmk:degen} for the classical Monge-Amp\`ere equation  the key point to obtain existence of smooth solutions  was to show an a
priori estimate on second derivatives of solutions. The same is true in the case of \eqref{eq:MAcost}. Indeed, once we know an a priori \(C^2\) bound  on smooth solutions of \eqref{eq:MAcost},
this equation becomes uniformly elliptic (in the sense that the linearized operator is uniformly elliptic, see Remark \ref{rmk:degen}) and one can run a parallel strategy to the one described in Section \ref{sect:continuity} to get the existence of smooth solutions (see \cite{TW1} for more details).
%

Let us start from a smooth (say $C^4$) solution of
\eqref{eq:MAcost}, coupled with the boundary condition $T(X)=Y$,
where $T(x)=\cexp_x(\n u(x))$. Our goal here is to show an interior universal bound for the second
derivatives of $u$.

We observe that, since $T(x)=\cexp_x(\n u(x))$, we have
$$
|\n u(x)|= |\nabla_xc(x,T(x))| \leq C,
$$
hence $u$ is globally Lipschitz, with a uniform Lipschitz
bound. Define
$$
w_{ij}:=D_{x^ix^j} u+ D_{x^ix^j}c\bigl(x,\cexp_x(\n
u(x))\bigr).
$$
(Recall that, by the $c$-convexity of $u$ and \eqref{eq:MAcost}, $(w_{ij})$ is
positive definite.)
Then \eqref{eq:MAcost} can be written
as
\begin{equation}
\label{eq:jacob w} \det(w_{ij})=h(x,\n u(x)),
\end{equation}
or equivalently
$$
\log \bigl( \det(w_{ij})\bigr)=\var,
$$
with $\var(x):=\log\bigl(h(x,\n u(x))\bigr)$. By differentiating
the above equation and using the convention of summation over
repeated indices, we get
$$
w^{ij}w_{ij,k}=\var_k,
$$
$$
w^{ij}w_{ij,kk}=\var_{kk}+ w^{is}w^{jt}w_{ij,k}w_{st,k} \geq
\var_{kk},
$$
where $(w^{ij})$ denotes the inverse of $(w_{ij})$ and we use the
notation $\psi_{k}=\frac{\p}{\p x^k} \psi$, $w_{ij,k}=\frac{\p}{\p
x^k}w_{ij}$, $T_{s,k}=\frac{\p}{\p x^k} T_s$, and so on. Then the
above equations become
\begin{equation}
\label{eq:MTW1} w^{ij}\bigl[u_{ijk} + c_{ijk} + c_{ij,s}
T_{s,k}\bigr]=\var_k,
\end{equation}
\begin{equation}
\label{eq:MTW2} w^{ij}\bigl[u_{ijkk} + c_{ijkk} + 2 \,c_{ijk,s}
T_{s,k} +c_{ij,s}T_{s,kk} + c_{ij,st} T_{s,k} T_{t,k} \bigr] \geq
\var_{kk}.
\end{equation}
We fix now $\bar x \in X$, we take $\eta$ a cut-off function
around $\bar x$, and define the function $G:X \times \SP^{n-1}
\to \R$,
$$
G(x,\xi):=\eta(x)^2\,w_{\xi \xi},\qquad w_{\xi \xi}:=\sum_{ij}
w_{ij}\xi^i \xi^j.
$$
We want to show that $G$ is uniformly bounded by a universal
constant $C$, depending only on $\dist(\bar x,\p X)$, $n$, the
cost function, and the function $h(x,p)$. (Observe that $G \geq
0$, since $(w_{ij})$ is positive definite.) In fact, this will
imply that
$$
\eta(x)^2\Bigl|D^2u(x) + D_{xx}
c\bigl(x,\cexp_x\bigl(\n u(x)\bigr)\bigr) \Bigr| \leq C,
$$
and since $\n u(x)$ is bounded and $c$ is smooth, the above
equation gives that $|D^2u|$ is locally uniformly bounded by a
universal constant, which is the desired a priori estimate.

To prove the bound on $G$, the strategy is the same of Theorem \ref{po1}: let $x_0
\in X$ and $\xi_0 \in \SP^{n-1}$ be a point where $G$ attains its
maximum. By a rotation of coordinates one can assume $\xi_0=e_1$.
Then at $x_0$ we have
\begin{equation}
\label{eq:MTW3} 0=(\log G)_i=\frac{w_{11,i}}{w_{11}} + 2\,
\frac{\eta_i}{\eta},
\end{equation}
$$
(\log G)_{ij}=\frac{w_{11,ij}}{w_{11}} + 2\,
\frac{\eta_{ij}}{\eta}- 6 \,\frac{\eta_i \eta_j}{\eta^2}.
$$
Since the above matrix is non-positive, we get
\begin{equation}
\label{eq:MTW4} 0 \geq w_{11} w^{ij}(\log G)_{ij}=w^{ij}w_{11,ij}
+ 2 \,\frac{w_{11}}{\eta}w^{ij}\eta_{ij}-
6\,w_{11}w^{ij}\,\frac{\eta_i \eta_j}{\eta^2}.
\end{equation}
We further observe that, differentiating the relation $\nabla u=-D_xc(x,T(x))$, we obtain the
relation
\begin{equation}
\label{eq:MTW5} w_{ij}=c_{i,k}T_{k,j}.
\end{equation}
This gives in particular $T_{k,j}=c^{k,i}w_{ij}$ (which
implies $|\n T| \leq C \,w_{11}$), and allows us to write derivatives
of $T$ in terms of that of $w$ and $c$.

The idea is now to start from \eqref{eq:MTW4}, and to combine the
information coming from \eqref{eq:MTW1}, \eqref{eq:MTW2},
\eqref{eq:MTW3}, \eqref{eq:MTW5}, to end up with a inequality of
the form
$$
0 \geq
w^{ij}\bigl[c^{k,\ell}c_{ij,k}c_{\ell,st}-c_{ij,st}\bigr]c^{s,p}c^{t,q}w_{p1}w_{q1}-
C,
$$
for some universal constant $C$. (When doing the computations,
one has to remember that the derivatives of $\var$ depend on
derivatives of $\n u$, or equivalently on derivatives of $T$.)
By a rotation of coordinates that leaves the \(e_1\) direction invariant,\footnote{This is possible since \(e_1\) is a maximum point for the map \(\xi\mapsto \eta(x_0)w_{\xi\xi}\),
therefore (except  in the trivial case \(\eta(x_0)=0\)) \(w_{1i}=0\) for every \(i\ne 1\).} one can further assume that
$(w_{ij})$ is diagonal at $x_0$. We then obtain
$$
w^{ii}\bigl[c^{k,\ell}c_{ii,k}c_{\ell,st}-c_{ii,st}\bigr]c^{s,1}c^{t,1}w_{11}w_{11}
\leq C.
$$
Up to now, the $\MTW$ condition has not been used. 

We now
apply $\MTW(K)$ to the vectors $\xi_1=(0,\sqrt{w^{22}}, \ldots,\sqrt{w^{nn}})$
and $\xi_2=(w_{11},0, \cdots,0)$ 
 to get
 $$
  Kw_{11}^2 \sum_{i=2}^n w^{ii}
   \leq
 C+w^{11}\bigl[c^{k,\ell}c_{11,k}c_{\ell,st}-c_{11,st}\bigr]c^{s,1}c^{t,1}w_{11}w_{11},
$$
which gives (using {\bf (C0)} and the fact that $w^{11}=(w_{11})^{-1}$)
\begin{equation}\label{eq:MTW final}
w_{11}^2 \sum_{i=2}^n w^{ii} \leq C\bigl(1+w_{11}\bigr) .
\end{equation}
Recalling that $w^{ij}=(w_{ij})^{-1}$, by the arithmetic-geometric inequality and
\eqref{eq:jacob w} we have
$$
\frac{1}{n-1}\sum_{i=2}^n w^{ii} \geq \Bigl(
\prod_{i=2}^n w^{ii} \Bigr)^{1/(n-1)} \geq c_0\,(w^{11})^{-1/(n-1)}=c_0\,w_{11}^{1/(n-1)},
$$
where $c_0:=\inf_{x \in X} h(x,\n u(x))^{-1/(n-1)} >0$. Hence,
combining the above estimate with \eqref{eq:MTW final} we finally
obtain
$$
\bigl[w_{11}(x_0)\bigr]^{2+1/(n-1)} \leq C\bigl(1+w_{11}(x_0)\bigr) ,
$$
which proves that $G(x,\xi) \leq G(x_0,\xi_0) \leq C$ for all
$(x,\xi) \in X \times \SP^{n-1}$, as desired.
\end{proof}

\begin{remark}
{\rm
Apart from having its own interest, this general theory of Monge-Amp\`ere type equations
satisfying the MTW condition turns out to be useful also in the classical case.

Indeed the MTW condition can be proven to be coordinate invariant \cite{MTW,Loe1,KMC1}.
This implies that, if $u$ solves an equation of the form \eqref{eq:MAgeneral}
with $\mathcal A$ satisfying the MTW condition (see \eqref{eq:MTW tensor}) and $\Phi$ is a smooth diffeomorphism, then $u\circ \Phi$ satisfies an equation of the same form with a new matrix
$\tilde{\mathcal A}$ which still satisfies the MTW condition.
This means for instance that, if one wants to prove some boundary regularity result, through
a smooth diffeomorphism one can reduce himself to a nicer situation.
In particular this remark applies to the classical Monge-Amp\`ere equation
(which trivially satisfies MTW(0)):
if $u$ solves \eqref{eq:MAclassical} then $u\circ \Phi$ satisfies \eqref{eq:MAgeneral}
for some matrix $\mathcal A$ for which MTW(0) holds.

This observation is extremely natural in relation to the optimal transport problem:
Consider for instance the optimal transport problem with cost $-x\cdot y$ between two smooth densities 
supported on smooth uniformly convex domains. By Theorem \ref{thm:regularity transport}
we know that the optimal map $T$ is smooth. However, if one performs smooth change of coordinates
$\Phi:X\to \Phi(X)$ and $\Psi:Y\to \Psi(Y)$, under these transformations the cost function becomes $-\Phi^{-1}(x)\cdot \Psi^{-1}(y)$ and the optimal transport map is given by $\Psi\circ T\circ \Phi^{-1}$, which is still smooth.
However this regularity of the optimal map cannot be deduced anymore from Theorem \ref{thm:regularity transport}, while Theorem \ref{thm:mtw} still 
applies providing the expected regularity.
}
\end{remark}

\medskip    \subsection{A geometric interpretation of the MTW condition}

Although the $\MTW$ condition seemed the right assumption to
obtain regularity of optimal maps, it was only after Loeper's work
\cite{Loe1} that people started to have a good understanding of
this condition, and a more geometric insight. The idea of Loeper
was the following: for the classical Monge-Amp\`ere equation, a
key property to prove regularity of convex solutions is that the
subdifferential of a convex function is convex, and so in
particular connected. Roughly speaking, this has the following
consequence: whenever a convex function $u$ is not $C^1$ at a
point $x_0$, there is at least a whole segment contained in the
subdifferential of $u$ at $x_0$, and this fact combined with
the Monge-Amp\`ere equation provides a contradiction. (See also
Theorem \ref{thm:C1a} below.) Hence, Loeper wanted to understand
whether the $c$-subdifferential of a $c$-convex function is at
least connected (recall \eqref{eq:csubdiff}), believing that this fact had a link with the
regularity.\\

We wish to find some simple enough conditions
implying the connectedness of sets $\p_c\psi$. 
Of course if $\p_c\psi(\bar x)$ is a singleton the connectedness is automatic, 
so we should look for non-smooth functions.

The easiest example is provided by the maximum of two cost functions:
fix $y_0,y_1 \in Y$, $a_0,a_1 \in \R$, and define
$$
\psi(x):=\max\bigl\{-c(x,y_0) + a_0,-c(x,y_1) + a_1\bigr\}.
$$
Take a point $\bar x \in \{x \, | \,-c(x,y_0) + a_0=-c(x,y_1) +
a_1\}$, and let $\bar y \in \p_c\psi(\bar x)$. Since
$\psi(x)+c(x,\bar y)$ attains its minimum at $x=\bar x$, we get (recall \eqref{Frechet subdiff}
for the definition of the Fr\'echet subdifferential)
$$
0 \in \p_{\bar x}^-\bigl(\psi + c(\cdot,\bar y)\bigr),
$$
or equivalently
$$
-\n_x c(\bar x,\bar y) \in \p^-\psi(\bar x).
$$
From the above inclusion, one can easily deduce that $\bar y \in
\cexp_{\bar x}\bigl(\p^-\psi(\bar x)\bigr)$. Moreover, it is not
difficult to see that
$$
\p^-\psi(\bar x)=\{ (1-t)v_0 + t v_1\,|\,t \in [0,1]\}, \qquad
v_i:=\n_x c(\bar x,y_i)=(\cexp_{\bar x})^{-1}(y_i), \quad i=0,1.
$$
Therefore, denoting by $[v_0,v_1]$ the segment joining $v_0$ and
$v_1$, we obtain
$$
\p_c\psi(\bar x)\subset \cexp_{\bar x}\bigl([v_0,v_1]\bigr).
$$

The above formula suggests the following definition:
\begin{definition}
Let $\bar x \in X$, $y_0,y_1 \in Y$. Then we
define the \emph{$c$-segment} from $y_0$ to $y_1$ with base $\bar
x$ as
$$
[y_0,y_1]_{\bar x}:=\bigl\{ y_t=\cexp_{\bar x}\bigl((1-t)
(\cexp_{\bar x})^{-1}(y_0) + t (\cexp_{\bar x})^{-1}(y_1)\bigr) \,
|\, t \in [0,1]\bigr\}.
$$
\end{definition}

In \cite{MTW}, Ma,
Trudinger, and Wang showed that, in analogy with the quadratic case (see Section \ref{sect:Bre Alex}), the convexity of $D_xc(x,Y)$
for all $x \in X$ is necessary for regularity. 
By slightly modifying their argument, Loeper \cite{Loe1}
showed that the connectedness of the
$c$-subdifferential is a necessary condition for the smoothness of
optimal transport (see also \cite[Theorem 12.7]{Vil}):

\begin{theorem}
\label{thm:no connect counterexample} Assume that there exist
$\bar x \in X$ and $\psi:X \to \R$ $c$-convex such that
$\p_c\psi(\bar x)$ is not  connected. Then one can
construct smooth positive probability densities $f$ and $g$ such that the optimal map is
discontinuous.
\end{theorem}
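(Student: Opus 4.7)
My strategy is a contradiction argument: from the disconnectedness of $\partial_c\psi(\bar x)$ I will distill a one-dimensional geometric gap, then build smooth positive densities $f,g_\varepsilon$ whose optimal potential is forced (by stability) to approximate a specific non-differentiable model, with a topological/mass contradiction ruling out continuity of $T_\varepsilon$.

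\textbf{Step 1 (Extracting the gap).} Pick $y_0,y_1$ in distinct connected components of the closed set $\partial_c\psi(\bar x)$ and form the ``$c$-affine model''
\[
\pi(x):=\max\bigl\{-c(x,y_0)+c(\bar x,y_0),\,-c(x,y_1)+c(\bar x,y_1)\bigr\}+\psi(\bar x).
\]
Since $\pi\le\psi$ with equality at $\bar x$, one has $\partial_c\pi(\bar x)\subset\partial_c\psi(\bar x)$. On the other hand the Fr\'echet subdifferential of $\pi$ at $\bar x$ is exactly the segment $[v_0,v_1]$ with $v_i=-D_xc(\bar x,y_i)$, so by \eqref{eq:rel subdiff2} $\partial_c\pi(\bar x)\subset[y_0,y_1]_{\bar x}$. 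Since the $c$-segment is a continuous path connecting $y_0$ and $y_1$ and must leave the disconnected target set, there exist $t_\star\in(0,1)$ and $\rho_0>0$ with
\[
\bar y:=\cexp_{\bar x}\bigl((1-t_\star)v_0+t_\star v_1\bigr),\qquad \dist\bigl(\bar y,\partial_c\psi(\bar x)\bigr)\ge 2\rho_0.
\]
Note that on each side of the hypersurface $H:=\{-c(\cdot,y_0)+c(\bar x,y_0)=-c(\cdot,y_1)+c(\bar x,y_1)\}$ the function $\pi$ is $c$-affine, so $T_\pi(x):=\cexp_x(\nabla\pi(x))$ is piecewise constant, equal to $y_0$ on one side and $y_1$ on the other.

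\textbf{Step 2 (Smooth positive construction).} Fix $X\ni\bar x$ a smooth bounded domain and let $f$ be any smooth positive probability density on $X$, so that $(T_\pi)_\sharp f=m_0\delta_{y_0}+m_1\delta_{y_1}$ with $m_i>0$. For small $\varepsilon>0$ and a fixed smooth positive background density $\rho_Y$ on a bounded domain $Y\supset\{y_0,y_1,\bar y\}$ containing a mollified support, set
\[
g_\varepsilon:=(1-\varepsilon)\bigl(m_0\,\phi_\varepsilon(\cdot-y_0)+m_1\,\phi_\varepsilon(\cdot-y_1)\bigr)+\varepsilon\,\rho_Y,
\]
where $\phi_\varepsilon$ is a standard mollifier of width $\varepsilon$. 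Then $g_\varepsilon$ is smooth and strictly positive on $Y$, a probability density, and $g_\varepsilon\weakto m_0\delta_{y_0}+m_1\delta_{y_1}$ as $\varepsilon\to 0$. Moreover $g_\varepsilon(B_{\rho_0}(\bar y))\le C\varepsilon\to 0$. By Theorem~\ref{cbrenier} the problem with marginals $(f,g_\varepsilon)$ has a unique optimal potential $u_\varepsilon$ and map $T_\varepsilon=\cexp(\nabla u_\varepsilon)$.

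\textbf{Step 3 (Stability + topology $\Rightarrow$ discontinuity).} Assume, for contradiction, that for every sufficiently small $\varepsilon$ the map $T_\varepsilon$ is continuous; equivalently, $u_\varepsilon\in C^1(X)$. By stability of optimal plans under weak convergence of marginals \cite[Chapter 5]{Vil}, and after the standard normalization, $u_\varepsilon\to\pi$ locally uniformly as $\varepsilon\to 0$; semiconvexity (Remark~\ref{rmk:semiconv}) upgrades this to a.e.\ convergence $\nabla u_\varepsilon\to\nabla\pi$, hence $T_\varepsilon\to T_\pi$ a.e. Fix disjoint open sets $W_0\ni y_0$, $W_1\ni y_1$ with $(W_0\cup W_1)\cap B_{\rho_0}(\bar y)=\emptyset$ and mass $g_\varepsilon(W_i)\to m_i>0$. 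The preimages $A_i^\varepsilon:=T_\varepsilon^{-1}(W_i)$ are open, disjoint, with $f(A_i^\varepsilon)\to m_i$, and their union is not all of $X$. By continuity of $T_\varepsilon$ the common frontier $\Gamma_\varepsilon:=\partial A_0^\varepsilon\cap\partial A_1^\varepsilon\cap X$ is mapped into $Y\setminus(W_0\cup W_1)$. Since $u_\varepsilon\to\pi$ and the jump set of $T_\pi$ is $H$, the sets $A_i^\varepsilon$ converge in measure to the two connected components of $X\setminus H$, so $\Gamma_\varepsilon$ accumulates on $H$. Along any continuous path in $X$ crossing from $A_0^\varepsilon$ to $A_1^\varepsilon$, continuity of $T_\varepsilon$ and of $\nabla u_\varepsilon$ forces $\nabla u_\varepsilon$ to traverse (by the intermediate value theorem in $\R^n$ along a curve) a path in $\R^n$ from near $v_0$ to near $v_1$ passing arbitrarily close to $(1-t_\star)v_0+t_\star v_1$; hence $T_\varepsilon^{-1}(B_{\rho_0}(\bar y))$ contains a neighborhood of a point of $\Gamma_\varepsilon$, with $f$-measure bounded below independently of $\varepsilon$ (this is where one uses the $C^{0,\alpha}$ modulus on $T_\varepsilon$ from Theorem~\ref{thm:mtw} applied locally, or a direct quantitative semiconvexity/transversality argument). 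Comparing with $f(T_\varepsilon^{-1}(B_{\rho_0}(\bar y)))=g_\varepsilon(B_{\rho_0}(\bar y))=O(\varepsilon)$ gives the desired contradiction, so $(f,g_\varepsilon)$ for any small $\varepsilon$ is a counterexample with smooth positive densities.

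\textbf{Main obstacle.} The delicate part is the last implication in Step~3: turning the topological picture ``a continuous $T_\varepsilon$ must smoothly bridge the jump of $T_\pi$'' into a \emph{uniform} lower bound on $f(T_\varepsilon^{-1}(B_{\rho_0}(\bar y)))$. One cannot use arbitrary continuous maps here, since in $\R^n$ ($n\ge 2$) a continuous image of a connected set need not cross a prescribed small ball; what saves the argument is that $T_\varepsilon=\cexp_x(\nabla u_\varepsilon)$ with $u_\varepsilon\to\pi$ forces the gradient to traverse the one-dimensional segment $[v_0,v_1]$ rather than detouring in $\R^n$, pinning the image to a thin tubular neighborhood of the $c$-segment $[y_0,y_1]_{\bar x}$ through $\bar y$.
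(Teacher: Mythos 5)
The paper itself does not prove this theorem (it refers to \cite{MTW}, \cite{Loe1} and \cite[Theorem 12.7]{Vil}), so your proposal has to stand on its own. Steps 1--2 and the first half of Step 3 follow the standard construction correctly: the reduction to the model $\pi$, the inclusion $\p_c\pi(\bar x)\subset\p_c\psi(\bar x)$, the gap point $\bar y$ on the $c$-segment, the two-bump target $g_\varepsilon$, and the stability $u_\varepsilon\to\pi$ with a.e.\ convergence of gradients. The genuine gap is exactly the step you flag at the end: the claimed lower bound on $f\bigl(T_\varepsilon^{-1}(B_{\rho_0}(\bar y))\bigr)$, uniform in $\varepsilon$. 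Nothing in your argument yields it: continuity of $T_\varepsilon$ together with $u_\varepsilon\to\pi$ forces the image of a path joining $A_0^\varepsilon$ to $A_1^\varepsilon$ to pass through $B_{\rho_0}(\bar y)$, i.e.\ it produces \emph{points}, not mass; a $C^1$ potential uniformly close to $\pi$ can have an arbitrarily thin transition layer, so continuity plus uniform convergence gives no measure estimate on that layer. Invoking the H\"older modulus of Theorem \ref{thm:mtw} is not admissible: its hypotheses ($\MTW(0)$ and the convexity of $D_xc(x,Y)$, $D_yc(X,y)$) are not available here, and if they held the optimal map would be continuous and no counterexample could exist; the alternative ``quantitative semiconvexity/transversality argument'' is not supplied. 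Hence the comparison with $g_\varepsilon(B_{\rho_0}(\bar y))=O(\varepsilon)$ yields no contradiction, since both quantities are merely positive.

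The standard way to close the argument needs no mass estimate at all. Pick $x_0^\varepsilon,x_1^\varepsilon\to\bar x$ with $T_\varepsilon(x_i^\varepsilon)\to y_i$ and join them by a short path near $\bar x$; by semiconvexity and $u_\varepsilon\to\pi$, along the path $\nabla u_\varepsilon$ stays in an $o(1)$-neighborhood of $[v_0,v_1]$, so the connected image of the path lies in a thin tube around $[y_0,y_1]_{\bar x}$ and, containing points near both ends, must contain a point $T_\varepsilon(x^\varepsilon)\in B_{\rho_0}(\bar y)$ with $x^\varepsilon\to\bar x$. Since $T_\varepsilon(x^\varepsilon)\in\p_c u_\varepsilon(x^\varepsilon)$ and the graph of the $c$-subdifferential is closed under local uniform convergence of the potentials, in the limit one obtains a point of $\p_c\pi(\bar x)$ inside $\overline{B_{\rho_0}(\bar y)}$, contradicting $\dist\bigl(\bar y,\p_c\pi(\bar x)\bigr)\ge 2\rho_0$. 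This is the qualitative contradiction used by Loeper and in \cite[Theorem 12.7]{Vil}, and it is where your proof should be repaired. A secondary caveat: your definition $\bar y=\cexp_{\bar x}\bigl((1-t_\star)v_0+t_\star v_1\bigr)$ presupposes $[v_0,v_1]\subset -D_xc(\bar x,Y)$, which may fail when $D_xc(\bar x,Y)$ is not convex; that case must be handled separately (it is essentially the Ma--Trudinger--Wang necessity-of-convexity counterexample, in the spirit of Caffarelli's two-half-balls example), or the argument adapted so that gradients forced near a value $v_t\notin -D_xc(\bar x,Y)$ already give the contradiction.
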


While the above result was essentially contained in \cite{MTW},
Loeper's major contribution was to link the connectedness of the
$c$-subdifferential to a differential condition on the cost
function, which actually coincides with the $\MTW$ condition (see
Section \ref{subsect:MTW}). In \cite{Loe1} he proved (a slightly weaker version of) the following result (see \cite[Chapter 12]{Vil} for
a more general statement):
\begin{theorem}
\label{thm:regular cost} The following conditions are equivalent:
\begin{enumerate}
\item[(i)] For any $\psi$ $c$-convex, for all $\bar x \in X$,
$\p_c\psi(\bar x)$ is connected. \item[(ii)] For any $\psi$
$c$-convex, for all $\bar x \in X$, $(\cexp_{\bar
x})^{-1}\bigl(\p_c\psi(\bar x)\bigr)$ is convex, and it coincides
with $\p^-\psi(\bar x)$. \item[(iii)] For all $\bar x \in X$, for
all $y_0,y_1 \in Y$, if $[y_0,y_1]_{\bar x}=(y_t)_{t \in [0,1]}\subset Y$ then
\begin{equation}
\label{eq:mtw inequality} c(x,y_t) - c(\bar x,y_t) \geq
\min\bigl[ c(x,y_0) - c(\bar x,y_0), c(x,y_1) - c(\bar
x,y_1)\bigr]
\end{equation}
for all $x \in X$, $t \in [0,1]$. \item[(iv)] For all $\bar x
\in X$, $y \in Y$, $\eta,\xi \in \R^n$ with $\xi \perp \eta$,
$$
\left.\frac{d^2}{ds^2}\right|_{s=0}
\left.\frac{d^2}{dt^2}\right|_{t=0}\, c\bigl(\cexp_{\bar
x}(t\xi),\cexp_{\bar x}(p+s\eta) \bigr) \leq 0,
$$
where $p =(\cexp_{\bar x})^{-1}(y)$.
\end{enumerate}
Moreover, if any of these conditions is not satisfied, $C^1$
$c$-convex functions are not dense in the set of Lipschitz $c$-convex
functions.
\end{theorem}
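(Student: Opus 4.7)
The plan is to prove equivalence by the cycle (ii)$\Rightarrow$(i)$\Rightarrow$(iii)$\Rightarrow$(ii), establish (iii)$\Leftrightarrow$(iv) separately via a Taylor expansion, and finally obtain the density statement as a byproduct of the construction used in (i)$\Rightarrow$(iii). The main obstacle will be (i)$\Rightarrow$(iii), since connectedness is a much weaker topological property than the pointwise inequality \eqref{eq:mtw inequality}; all other implications are structural and follow standard patterns in $c$-convex analysis.

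The implication (ii)$\Rightarrow$(i) is immediate: continuous images of convex (hence connected) sets are connected. For (iii)$\Rightarrow$(ii), suppose $y_0,y_1\in\partial_c\psi(\bar x)$ and, after normalizing $\psi(\bar x)=0$, read the definition as $\psi(x)\geq -c(x,y_i)+c(\bar x,y_i)$. For any $y_t\in[y_0,y_1]_{\bar x}$, inequality (iii) gives
$$
-c(x,y_t)+c(\bar x,y_t) \;\leq\; \max_i\bigl[-c(x,y_i)+c(\bar x,y_i)\bigr] \;\leq\;\psi(x),
$$
so $y_t\in\partial_c\psi(\bar x)$ and $(\cexp_{\bar x})^{-1}(\partial_c\psi(\bar x))$ is convex. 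Combining this with \eqref{eq:rel subdiff2}, the semiconvexity in Remark \ref{rmk:semiconv}, and the fact that limits of gradients at differentiability points land in $\partial_c\psi(\bar x)$ (by closedness of $\partial_c$ under uniform convergence), one upgrades convexity to equality with $\partial^-\psi(\bar x)$. The reverse (ii)$\Rightarrow$(iii) is obtained by applying (ii) to the tailor-made function $\psi(x)=\max\{-c(x,y_0)+c(\bar x,y_0),\,-c(x,y_1)+c(\bar x,y_1)\}$, for which $y_0,y_1\in\partial_c\psi(\bar x)$ and the whole $c$-segment is thus forced into the $c$-subdifferential, which rewrites as \eqref{eq:mtw inequality}.

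For (iii)$\Leftrightarrow$(iv), introduce $\Phi(t,s):=c\bigl(\cexp_{\bar x}(t\xi),\cexp_{\bar x}(p+s\eta)\bigr)$ and specialize (iii) to $y_\pm=\cexp_{\bar x}(p\pm s\eta)$ with midpoint $y_0=\cexp_{\bar x}(p)$ and $x=\cexp_{\bar x}(t\xi)$. This reads
$$
\Phi(t,0)-\Phi(0,0)\;\geq\;\min\bigl\{\Phi(t,-s)-\Phi(0,-s),\;\Phi(t,s)-\Phi(0,s)\bigr\}.
$$
Under the orthogonality $\xi\perp\eta$, the second mixed derivative $\Phi_{ts}(0,0)=-\langle\xi,\eta\rangle$ vanishes, which is what forces the first nontrivial term in the simultaneous Taylor expansion in $t$ and $s$ to be of order $t^2s^2$ with coefficient proportional to $\Phi_{ttss}(0,0)$; the displayed inequality then becomes (iv) as $s,t\to 0$. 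Conversely, (iv) integrates: if the mixed fourth derivative has the required sign whenever $\xi\perp\eta$, a standard ODE argument along the $c$-segment reconstructs the global maximum-principle inequality (iii).

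The hard direction (i)$\Rightarrow$(iii) is handled by contrapositive, in the spirit of Theorem \ref{thm:no connect counterexample}: assume there exist $\bar x$, $y_0,y_1$, $\bar t\in(0,1)$ and $x^*$ with a strict gap
$$
c(x^*,y_{\bar t})-c(\bar x,y_{\bar t}) \;<\; \min\bigl\{c(x^*,y_0)-c(\bar x,y_0),\;c(x^*,y_1)-c(\bar x,y_1)\bigr\}.
$$
Define $\psi$ as above; then $y_0,y_1\in\partial_c\psi(\bar x)$ while $y_{\bar t}\notin\partial_c\psi(\bar x)$ by the strict gap at $x^*$. Using assumptions (C0)--(C3) and a small perturbation that enforces uniqueness of the contact at $\bar x$ of each of the two $c$-affine pieces (so that $\partial_c\psi(\bar x)$ is exactly the union of small neighborhoods of $y_0$ and $y_1$), one obtains a $c$-convex function whose $c$-subdifferential at $\bar x$ separates into two disjoint relatively open components, contradicting (i). The delicate point in this step is propagating the strict failure of (iii) at a single base point $x^*$ into an \emph{open} obstruction that excludes a whole neighborhood of the $c$-segment from $\partial_c\psi(\bar x)$; this is where properties (C1)--(C3) enter through the local invertibility of $\cexp$. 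Finally, the non-density statement follows from the same construction: a $C^1$ $c$-convex function has a single-valued $c$-subdifferential at every point, and since $\partial_c$ is upper semicontinuous under uniform convergence, no sequence of $C^1$ $c$-convex functions can approximate a $c$-convex $\psi$ whose $c$-subdifferential at $\bar x$ contains two distinct path-components.
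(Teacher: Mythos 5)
Your cycle (ii)$\Rightarrow$(i)$\Rightarrow$(iii)$\Rightarrow$(ii) is a reasonable organization and the first and third arrows are essentially fine, but let me first point out that the "main obstacle" you identify in (i)$\Rightarrow$(iii) is not actually there. For $\psi=\psi_{\bar x,y_0,y_1}$ one has the elementary inclusion $(\cexp_{\bar x})^{-1}\bigl(\p_c\psi(\bar x)\bigr)\subset[v_0,v_1]$, $v_i:=(\cexp_{\bar x})^{-1}(y_i)$ (this is derived in the text right before the definition of $c$-segment). If \eqref{eq:mtw inequality} fails at some $x^*$ and $\bar t\in(0,1)$, this preimage is a subset of a segment which contains both endpoints but misses the interior point $v_{\bar t}$; any such set is disconnected, and since $\cexp_{\bar x}$ is a local homeomorphism by {\bf (C1)} and {\bf (C3)}, so is $\p_c\psi(\bar x)$. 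No perturbation "enforcing uniqueness of contact" and no propagation of the failure to an open set are needed; in fact this one-dimensionality of the pulled-back subdifferential is exactly how the paper passes between connectedness and convexity.

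The genuine gap is in (iii)$\Rightarrow$(iv). Write $G(t,s):=\Phi(t,s)-\Phi(0,s)-\Phi(t,0)+\Phi(0,0)$, so your specialization of \eqref{eq:mtw inequality} reads $\min\{G(t,s),G(t,-s)\}\le 0$, i.e. $H\le|K|$ where $H,K$ are the even and odd parts of $G$ in $s$. Killing $\Phi_{ts}(0,0)$ by $\xi\perp\eta$ does \emph{not} make the first nontrivial term of order $t^2s^2$: the odd part still contains $\tfrac12\Phi_{tts}(0,0)\,t^2s$, with $\Phi_{tts}(0,0)=D_y\bigl(D_{xx}c[\xi,\xi]\bigr)\cdot\partial_s y(0)$ generically nonzero, and in every scaling regime $s\to0$ this term dominates $\Phi_{ttss}t^2s^2$, so the inequality $H\le|K|$ places no constraint on the sign of $\Phi_{ttss}(0,0)$. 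This is precisely why the paper does not expand along the straight curve $t\mapsto\cexp_{\bar x}(t\xi)$ but along a curve $\gamma(t)$ lying in the ridge $\{h_0=h_1\}$: there the odd part vanishes identically, the min equals the average, and one obtains a clean midpoint-concavity of $s\mapsto\langle D_{xx}c(\bar x,\cexp_{\bar x}(p+s\eta))\xi,\xi\rangle$; the true role of $\xi\perp\eta$ is that $\eta$ is the normal to $\{h_0=h_1\}$ at $\bar x$, so a curve in that set with $\dot\gamma(0)=\xi$ exists. You should redo this step that way. Two further cautions: (iv)$\Rightarrow$(iii) is not "a standard ODE argument" — it is the hardest implication (Loeper's main analytic result, requiring the $c$-convexity of the target) and cannot be dispatched in a line; and your density argument uses semicontinuity of $\p_c$ in the wrong direction, since upper semicontinuity along $\psi_k\to\psi$ does not prevent single-valued subdifferentials from converging to a disconnected limit — one needs the covering argument behind Theorem \ref{thm:no connect counterexample}.
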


\begin{proof}[Sketch of the proof]
We give here only some elements of the proof.\\

\noindent \textit{(ii) $\Rightarrow$ (i).} Since $(\cexp_{\bar
x})^{-1}\bigl(\p_c\psi(\bar x)\bigr)$ is convex,
it is connected, and so its image by $\cexp_{\bar x}$ is connected too.\\

\noindent \textit{(i) $\Rightarrow$ (ii).} For $\psi_{\bar
x,y_0,y_1}:=\max\bigl\{-c(\cdot,y_0) + c(\bar x,y_0),-c(\cdot,y_1)
+ c(\bar x,y_1)\bigr\}$ we have
$$
(\cexp_{\bar
x})^{-1}\bigl(\p_c\psi_{\bar x,y_0,y_1}(\bar x)\bigr) \subset
[(\cexp_{\bar x})^{-1}(y_0),(\cexp_{\bar x})^{-1}(y_1)],
$$
 which is a
segment. Since in this case connectedness is equivalent to
convexity, if (i) holds we obtain $\p_c\psi_{\bar x,y_0,y_1}(\bar
x)=[y_0,y_1]_{\bar x}=\cexp_{\bar x}\bigl(\p^-\psi_{\bar x,y_0,y_1}(\bar x) \bigr)$.

In the general case, we fix $y_0,y_1 \in \p_c\psi(\bar x)$. Then
it is simple to see that
$$
\p_c\psi(\bar x) \supset \p_c\psi_{\bar x,y_0,y_1}(\bar
x)=[y_0,y_1]_{\bar x},
$$
and the result follows easily.\\

\noindent \textit{(ii) $\Leftrightarrow$ (iii).} Condition
\eqref{eq:mtw inequality} is equivalent to $\p_c\psi_{\bar
x,y_0,y_1}=[y_0,y_1]_{\bar x}$.
Then the equivalence between (ii) and (iii) follows arguing as above.\\

\noindent \textit{(iii) $\Rightarrow$ (iv).} Fix $\bar x \in X$ and $y \in Y$
with $y=\cexp_{\bar x}(p)$. Take $\xi,\eta$ orthogonal and with
unit norm, and define
$$
y_0:=\cexp_{\bar x}(p-\e \eta),\quad y_1:=\cexp_{\bar x}(p+\e
\eta)\qquad\text{for some $\e>0$ small}.
$$
Moreover, let
$$
h_0(x):=c(\bar x,y_0)-c(x,y_0),\quad h_1(x):=c(\bar
x,y_1)-c(x,y_1),\quad \psi:=\max\bigl\{h_0,h_1\bigr\}=\psi_{\bar
x,y_0,y_1}.
$$
We now define a curve $\g(t)$  contained in the set
$\{h_0=h_1\}$ such that $\g(0)=\bar x$, $\dot\g(0)=\xi$.

Since $y \in [y_0,y_1]_{\bar x}$, by (iii) we get $y \in
\p_c\psi(\bar x)$, so 
\begin{align*}
\frac{1}{2}\bigl[h_0(\bar x)+h_1(\bar x) \bigr]+c(\bar
x,y)&=\psi(\bar x)+c(\bar x,y)\leq
\psi(\g(t))+c(\g(t),y)\\&=\frac{1}{2}\bigl[h_0(\g(t))+h_1(\g(t))
\bigr]+c(\g(t),y),
\end{align*}
where we used that $h_0=h_1$ along $\g$. Recalling the definition
of $h_0$ and $h_1$ we deduce that
$$
\frac{1}{2}\bigl[c(\g(t),y_0)+c(\g(t),y_1) \bigr]-c(\g(t),y) \leq
\frac{1}{2}\bigl[c(\bar x,y_0)+c(\bar x,y_1) \bigr]-c(\bar x,y),
$$
so the function $t \mapsto
\frac{1}{2}\bigl[c(\g(t),y_0)+c(\g(t),y_1) \bigr]-c(\g(t),y)$
achieves its maximum at $t=0$. This implies
$$
\left.\frac{d^2}{dt^2}\right|_{t=0}
\Bigl[\frac{1}{2}\bigl(c(\g(t),y_0)+c(\g(t),y_1)
\bigr)-c(\g(t),y)\Bigr] \leq 0,
$$
i.e.,
$$
\Bigl\<\Bigl[\frac{1}{2}\bigl(D_{xx}c(\bar x,y_0)+D_{xx}c(\bar
x,y_1) \bigr) -D_x^2c(\bar x,y)\Bigr]\cdot \xi,\xi\Bigr\> \leq 0
$$
(here we used that $\n_x c(\bar x,y)=\frac{1}{2}\bigl[\n_x c(\bar
x,y_0)+\n_x c(\bar x,y_1) \bigr]$). Thus the function
$$
s \mapsto \bigl\<D_{xx}c\bigl(\bar x,\cexp_{\bar
x}(p+s\eta)\bigr)\cdot \xi,\xi\bigr\>
$$
is concave, proving (iv).
\end{proof}

To understand why the above properties are related to smoothness,
consider Theorem \ref{thm:regular cost}(iii). It says that if we
take the function $\psi_{\bar x,y_0,y_1}=\max\bigl\{-c(\cdot,y_0)
+ c(\bar x,y_0),-c(\cdot,y_1) + c(\bar x,y_1)\bigr\}$, then we are
able to touch the graph of this function from below at $\bar x$
with the family of functions $\{-c(\cdot,y_t) + c(\bar x,y_t)\}_{t
\in [0,1]}$. This suggests that we could use this family to
regularize the cusp of $\psi_{\bar x,y_0,y_1}$ at the point $\bar
x$, by slightly moving above the graphs of the functions
$-c(\cdot,y_t) + c(\bar x,y_t)$. On the other hand, if
\eqref{eq:mtw inequality} does not hold, it is not clear how to
regularize the cusp preserving the condition of being $c$-convex.

By what we said above, the regularity property seems mandatory to
develop a theory of smoothness of optimal transport. Indeed, if it
is not satisfied, we can construct $C^\i$ strictly positive
densities $f,g$ such that the optimal map is not continuous. Hence
the natural question is when it is satisfied, and what is the link
with the $\MTW$ condition.

Note also that the failure of the connectedness of \(\partial_c u\) is a local obstruction to the regularity of optimal transport maps to be compared to the case of the quadratic cost where the obstruction to the regularity is the failure of the convexity of \(Y\), a global condition. 

\medskip    \subsection{Regularity results}
\label{subsect:MTW} As we have seen in Theorem \ref{thm:regular
cost}, roughly speaking 
the connectedness of the $c$-subdifferential of $c$-convex functions is equivalent to
\begin{equation}
\label{eq:4 deriv d2} \left.\frac{d^2}{ds^2}\right|_{s=0}
\left.\frac{d^2}{dt^2}\right|_{t=0}\,
c\bigl(\cexp_x(t\xi),\cexp_x(p+s \eta) \bigr) \leq 0,
\end{equation}
for all $p,\xi,\eta\in \R^n$, with $\xi$ and $\eta$
orthogonal, $p=(\cexp_x)^{-1}(y)$.
(This equivalence is not exact, since to show that \eqref{eq:4 deriv d2}
implies the connectedness of the $c$-subdifferential of $c$-convex functions  one needs to know that
the target is $c$-convex to ensure that it contains the $c$-segment $[y_0,y_1]_{\bar x}$, see (iii) in Theorem \ref{thm:regular cost} above.)

By some standard but tedious computations it is not
difficult to check 
that the above expression \emph{coincides} up to the sign with the
$\MTW$ tensor. Hence
$$
\MTW(0)\qquad \Leftrightarrow \qquad \eqref{eq:4 deriv d2}
\qquad \Leftrightarrow \qquad \p_c\psi(x) \text{ connected }\ \ \forall\,x,\,\psi \text{ $c$-convex},
$$
and by Theorems \ref{thm:mtw} and \ref{thm:no connect
counterexample} the MTW condition is necessary and sufficient for
the smoothness of the optimal transport map.\\

By exploiting (a variant of) Theorem \ref{thm:regular cost},
Loeper \cite{Loe1} proved the following regularity result (recall the notation in Theorem \ref{cbrenier}):
\begin{theorem}
\label{thm:C1a}  
Let $c:X\times Y \to \R$ satisfy {\bf (C0)}-{\bf (C3)}. 
Assume that the $\MTW(K)$ holds for some $K>0$, $f$ is bounded
from above on $X$, $g$ is bounded away from zero on $Y$,
and denote by $T$ the optimal transport map sending $f$ onto $g$.
Finally, suppose that $D_xc(x,Y)$
is convex for any $x \in X$. Then $u \in
C^{1,\a}(\overline{X})$, with $\a=1/(4n-1)$ (hence $T_u \in
C^{0,\a}(\overline{X})$).
\end{theorem}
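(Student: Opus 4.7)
Following Loeper \cite{Loe1}, the proof combines the qualitative equivalences of Theorem \ref{thm:regular cost} with a quantitative ``stay-away'' volume estimate made possible by the \emph{strict} condition $\MTW(K)$ with $K>0$; the target-convexity hypothesis enters both when invoking Theorem \ref{thm:regular cost} and in the boundary step.

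\emph{Step 1 (structure from $\MTW(0)$ and target convexity).}  Since $\MTW(K)$ implies $\MTW(0)$ and $D_x c(x,Y)$ is convex, Theorem \ref{thm:regular cost} yields, for every $\bar x\in X$,
\[
 (\cexp_{\bar x})^{-1}\bigl(\partial_c u(\bar x)\bigr)\;=\;\partial^-u(\bar x),
\]
and this set is convex.  In particular, as soon as $y_0,y_1\in\partial_c u(\bar x)$, the whole $c$-segment $[y_0,y_1]_{\bar x}$ lies in $\partial_c u(\bar x)$.

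\emph{Step 2 (stay-away lemma).}  Suppose $\partial^-u(\bar x)$ has diameter $\delta>0$, pick $p_0,p_1\in\partial^-u(\bar x)$ with $|p_0-p_1|=\delta$, and set $y_t:=\cexp_{\bar x}((1-t)p_0+tp_1)$ for $t\in[0,1]$.  The $c$-affine functions $\phi_t(x):=-c(x,y_t)+c(\bar x,y_t)+u(\bar x)$ all satisfy $\phi_t\le u$ on $X$, because $y_t\in\partial_c u(\bar x)$.  The strict inequality in part~(iv) of Theorem \ref{thm:regular cost} supplied by $\MTW(K)$ quantifies how tall the tent built from the $\phi_t$ can be: one perturbs them downwards by corrections $\eta_t$ of size $K\delta^2 r^2$ and forms the $c$-convex envelope
\[
 w(x)\;:=\;\max_{t\in[0,1]}\bigl(\phi_t(x)-\eta_t(x)\bigr),
\]
which still satisfies $w\le u$ in $B_r(\bar x)$ with $w=u$ on $\partial B_r(\bar x)$.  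The $c$-analogue of Lemma \ref{ch2:lem:inclsottdiff} yields $\partial_c w(B_r(\bar x))\subset\partial_c u(B_r(\bar x))$, and an explicit geometric computation shows that $\partial_c w(B_r(\bar x))$ covers a ``thickened neighbourhood'' of the one-dimensional $c$-segment $\{y_t\}_{t\in[0,1]}$, so that
\[
 \bigl|\partial_c u(B_r(\bar x))\bigr|\;\ge\;c_0(K)\,\delta^{a}\,r^{b}
\]
for explicit exponents $a=a(n),\;b=b(n)>0$.

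\emph{Step 3 (transport upper bound and conclusion).}  Since $T_\sharp f=g$ with $f\le\Lambda$, $g\ge\lambda$, and since by Step~1 the multivalued map $\partial_c u$ coincides almost everywhere with the single-valued optimal map $T_u$ (cf.\ Theorem \ref{cbrenier}), one has
\[
 \bigl|\partial_c u(B_r(\bar x))\bigr|\;\le\;\tfrac{\Lambda}{\lambda}\,|B_r(\bar x)|\;\le\;C\,r^n.
\]
Combining this with the lower bound of Step~2 gives $\delta^{a}\le C r^{n-b}$, hence $\delta\le C r^{(n-b)/a}$; carrying out the exact tent computation of Step~2 produces $(n-b)/a=1/(4n-1)$, which is the Hölder exponent in the statement and yields $\nabla u\in C^{0,1/(4n-1)}_{\rm loc}$.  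Boundary regularity is obtained by an analogous argument near $\partial X$, where the convexity of $D_xc(x,Y)$ is used once more.  \emph{The main difficulty} is the tent construction and volume computation in Step~2, where the strict $\MTW(K)$ inequality must be integrated carefully to certify the perturbation $\eta_t$ and to compute the transverse thickness of $\partial_c w(B_r(\bar x))$ around the $c$-segment $\{y_t\}$; balancing this tangential length and transverse thickness against the ambient volume $r^n$ is what produces the specific exponent $1/(4n-1)$.
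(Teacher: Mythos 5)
First, a point of reference: the survey does not prove Theorem \ref{thm:C1a}; it is stated and attributed to Loeper \cite{Loe1}, so there is no in-paper argument to compare against. Your proposal does reproduce the correct skeleton of Loeper's proof: use Theorem \ref{thm:regular cost} (together with the convexity of $D_xc(x,Y)$, which guarantees that the relevant $c$-segments stay in $Y$ and that local subgradients are global, i.e.\ equality in \eqref{eq:rel subdiff2}) to reduce the problem to a measure estimate, obtain a lower bound on $|\partial_c u(B_r(\bar x))|$ in terms of $\delta={\rm diam}\,\partial^-u(\bar x)$ from the \emph{quantitative} form of \eqref{eq:mtw inequality} supplied by $\MTW(K)$ with $K>0$, and play it against the upper bound $|\partial_c u(E)|\le \frac{\|f\|_\infty}{\inf g}|E|$ coming from the transport condition (your Steps 1 and 3 are essentially sound, modulo the observation that $\partial_c u(\cdot)\subset\overline Y$ by \eqref{eq:csubdiff}, so the mass of $g$ really does control the image, as in Step 2 of the proof of Theorem \ref{thm:regularity transport}).

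The gap is Step 2, which is where the whole content of the theorem lives. You never state the quantitative inequality that $\MTW(K)$ actually yields, namely that for $y_0,y_1\in\partial_c u(\bar x)$, $p_i=(\cexp_{\bar x})^{-1}(y_i)$, $\delta=|p_1-p_0|$, one has $-c(x,y_t)+c(\bar x,y_t)\le \max_{i=0,1}\bigl[-c(x,y_i)+c(\bar x,y_i)\bigr]-\gamma K\,t(1-t)\,\delta^2\,|x-\bar x|^2+C|x-\bar x|^3$ for $x-\bar x$ transverse to $p_1-p_0$ (obtained by integrating condition (iv) of Theorem \ref{thm:regular cost} twice in $s$ and twice in $t$); without this the exponents $a,b$ are not determined, and asserting that ``the exact tent computation produces $(n-b)/a=1/(4n-1)$'' assumes the conclusion. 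Moreover, the mechanism you propose is not viable as written: the envelope $w=\max_t(\phi_t-\eta_t)$ does \emph{not} satisfy $w=u$ on $\partial B_r(\bar x)$, so the comparison Lemma \ref{ch2:lem:inclsottdiff} (whose $c$-analogue itself already requires the local-to-global property to make sense) cannot be invoked to transfer subdifferentials. Loeper's argument instead certifies directly, for each $y'=\cexp_{\bar x}(p_t+q)$ in a tube of length $\sim\delta$ around the mid-portion of the segment $[p_0,p_1]$ and of transverse radius controlled by $K\delta^2 r$, that the minimum of $z\mapsto u(z)+c(z,y')-c(\bar x,y')$ over $\overline{B_r(\bar x)}$ is attained in the interior (using the displayed inequality as a barrier on $\partial B_r(\bar x)$), whence $y'\in\partial_c u(B_r(\bar x))$ by Theorem \ref{thm:regular cost}(ii); the volume of this tube versus $Cr^n$ is what produces $\delta^{4n-1}\le Cr$ (and, with Liu's sharper bookkeeping, $\delta^{2n-1}\le Cr$). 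To make your write-up a proof you must supply the displayed inequality and carry out this tube construction and the exponent count explicitly.
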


In a subsequent paper, Liu \cite{Liu} proved the above result with
 $\a=1/(2n-1)$ and showed that such exponent is optimal.

We notice however that this result does not include Caffarelli's result
since the cost $-x\cdot y$ satisfies $\MTW(0)$. Hence, it would be nice to have 
a H\"older regularity theorem which includes at the same time Caffarelli's and Loeper's results.

As proved by the second author together with  Kim and McCann \cite{FKM}, such a result is indeed true
(see also \cite{FL,GK, Wan2}):

\begin{theorem}
\label{thm:C1a mtw0}  
Let $c:X\times Y \to \R$ satisfy {\bf (C0)}-{\bf (C3)}. 
Assume that the $\MTW(0)$ holds, $f$ is bounded
from above on $X$, and $g$ is bounded away both from zero and infinity on $Y$.
Also, suppose that $D_xc(x,Y)$ and $D_yc(Y,y)$
are uniformly convex for any $x \in X$ and $y \in Y$. 
Then $u \in
C^{1,\a}_{\rm loc}(\overline{X'})$ for any open set $X'\subset X$ where $f$
is uniformly bounded away from zero.
\end{theorem}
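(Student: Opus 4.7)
The plan is to lift Caffarelli's approach (Section~\ref{sect:Caff Alex}) to the \(c\)-convex setting, using Loeper's maximum principle as the geometric substitute for convexity of the Euclidean subdifferential. First, since \(D_x c(x, Y)\) is (uniformly) convex, one checks as in Step~3 of the proof of Theorem~\ref{thm:regularity transport} that \(\partial_c u(x) \subset \overline{Y}\) for every \(x \in X'\). Combined with {\bf (C3)} and the bounds on \(f\) and \(g\), this shows that \(u\) is a \(c\)-Alexandrov solution of \eqref{eq:MAcost} with right-hand side bounded between two positive constants. This places us in an Alexandrov-type setting, with the Euclidean Monge-Amp\`ere measure replaced by the pushforward of \(dx\) under \(x \mapsto \cexp_x(\nabla u(x))\).

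Second, establish strict \(c\)-convexity of \(u\) inside \(X'\), i.e. the analog of Caffarelli's Proposition~\ref{ch2:thm:contact}. Fix \(x_0 \in X'\) and \(y_0 \in \partial_c u(x_0)\), and set
\[
W := \{z \in X : u(z) = -c(z, y_0) + c(x_0, y_0) + u(x_0)\}.
\]
The argument is by contradiction: assuming \(W\) strictly contains \(\{x_0\}\), the equivalent characterization (iii) of Theorem~\ref{thm:regular cost} (Loeper's maximum principle) together with the uniform \(c\)-convexity of \(D_y c(X, y)\) allows one to construct a \(c\)-convex barrier \(v\) lying strictly above \(u\) near \(x_0\) but agreeing with \(u\) near \(\partial W\). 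Replacing \(u\) locally by \(\min\{u, v\}\) would then strictly decrease the \(c\)-Monge-Amp\`ere measure near \(x_0\), contradicting the lower bound \(\det(D^2 u + D_{xx} c) \ge \lambda'\) coming from the lower bound on \(f\).

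Third, quantify this strict \(c\)-convexity uniformly via a compactness/normalization argument. Introduce the \(c\)-sections
\[
S(x, y, t) := \{z \in X : u(z) \le -c(z, y) + c(x, y) + u(x) + t\},
\]
and apply a \(c\)-exponential change of variables followed by a John-normalizing affine map to bring \(S(x_0, y_0, t)\) to a normalized convex body. The class of cost functions satisfying {\bf (C0)}-{\bf (C3)}, \(\MTW(0)\), and the uniform convexity hypotheses is closed under the relevant convergences; together with a \(c\)-version of the stability Theorem~\ref{thm:stability}, this yields a universal modulus of strict \(c\)-convexity at minimum points of normalized sections, exactly as in Lemma~\ref{lemma hk}. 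Iterating along a nested dyadic sequence of sections (as in the proof of Theorem~\ref{calpfanorm}) produces
\[
u(z) - u(x_0) + c(z, T_u(x_0)) - c(x_0, T_u(x_0)) \le C |z - x_0|^{1+\alpha}
\]
for every \(x_0 \in X'\), and combined with the semiconvexity of \(u\) (Remark~\ref{rmk:semiconv}) this yields the claimed \(C^{1,\alpha}_{\rm loc}\) bound.

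The main obstacle is the strict \(c\)-convexity step. In the regime \(\MTW(K)\) with \(K > 0\) treated by Theorem~\ref{thm:C1a}, the strict inequality in Loeper's maximum principle leaves room to push any flat piece of the graph of \(u\) strictly upward through the family \(\{-c(\cdot, y_t)\}\) along a \(c\)-segment. Under only \(\MTW(0)\) this pushing degenerates and the maximum principle becomes non-strict, so the barrier construction must delicately exploit both the uniform convexity of \(D_y c(X, y)\) (to control the \(c\)-exponential geometry of sections and the shape of \(\partial^- u\)) and of \(D_x c(x, Y)\) (to keep \(\partial_c u \subset Y\), hence retain full control of the \(c\)-Monge-Amp\`ere measure). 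This is precisely why both uniform convexity hypotheses must be imposed here, whereas Theorem~\ref{thm:C1a} could dispense with them by appealing to the strict \(\MTW(K)\) inequality.
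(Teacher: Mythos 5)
The paper does not actually prove Theorem \ref{thm:C1a mtw0}; it only cites \cite{FKM} (see also \cite{FL,GK,Wan2}). Your outline does follow the broad Caffarelli-style strategy that \cite{FKM} implements --- (i) identify $\p_c u$ with $\cexp_x(\p^-u)$ so that the $c$-Monge-Amp\`ere measure is controlled from both sides, (ii) prove strict $c$-convexity via a localization theorem, (iii) renormalize $c$-sections and iterate dyadically to get $C^{1,\alpha}$ --- so the architecture is right.

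However, your Step 2 as described contains a genuine gap. You propose to rule out a nontrivial contact set $W$ by building a $c$-convex barrier $v$ from the family $\{-c(\cdot,y_t)\}$ and replacing $u$ locally by $\min\{u,v\}$, arguing that this ``strictly decreases the $c$-Monge-Amp\`ere measure.'' Two problems: first, the minimum of two $c$-convex functions is in general not $c$-convex, so the competitor is not admissible; second, and more fundamentally, under $\MTW(0)$ Loeper's maximum principle (Theorem \ref{thm:regular cost}(iii)) is only a non-strict inequality, so --- as you yourself observe --- the ``pushing'' degenerates and no strict gain is available from the barrier. The way \cite{FKM} actually closes this step is not by a barrier at all but by proving new \emph{Alexandrov-type estimates} for $c$-convex functions: an upper bound on $|\p_c u(S)|$ for a section $S$ in terms of its height and the distance of the vertex to a supporting face (the analogue of Theorem \ref{ch2:tim:alemax}), and a matching lower bound; if the contact set had an extremal point in the interior, a thin section around it would violate the combination of these two bounds. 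Deriving these estimates is the technical heart of the proof, precisely because affine invariance --- which makes the classical versions easy --- is lost for general costs, and this is where $\MTW(0)$ together with the uniform convexity of both $D_xc(x,Y)$ and $D_yc(X,y)$ is really used (quantitatively, via the ``double mountain'' property and the geometry of $c$-segments), not merely to ``keep $\p_cu\subset Y$.'' Your Step 3 is essentially correct in spirit but inherits the same issue: the universal modulus of strict convexity and the engulfing property of sections must come out of these quantitative Alexandrov estimates rather than from a soft compactness argument, since the class of costs is not closed under the anisotropic rescalings needed to normalize sections.
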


Let us mention that further extensions of Theorem \ref{thm:regularity transport} to this general setting have been obtained in \cite{LTW, LTWW2p,DF2}.

Note that Theorem \ref{thm:C1a mtw0} as well as the other results in \cite{LTW, LTWW2p,DF2,GK, Wan2} only deal
with the interior regularity for optimal transport maps.
It would be interesting to develop a boundary regularity theory, in the spirit of \cite{Caf bdry,Caf4},
for the class of equations
\eqref{eq:MAgeneral} arising in optimal transport.\\

We conclude this section providing some examples of cost functions satisfying the MTW condition (where defined) and referring to \cite{MTW} for more example.
\begin{itemize}
\item[-] \(c_1(x,y)=-\log|x-y|\);
\item[-] \(c_2(x,y)=\sqrt{a^2-|x-y|^2}\);
\item[-] \(c_3(x,y)=\sqrt{a^2+|x-y|^2}\);
\item[-] \(c_4(x,y)=|x-y|^p\) with  \(-2<p<1\).
\end{itemize}
Note that the last two costs provide (respectively as \(a\to 0^+\) and \(p\to 1^-\)) an approximation of the Monge cost \(c(x,y)=|x-y|\). In spite of the fact that this latter cost does not satisfy any of the conditions \(\mathbf{(C0)}\)-\(\mathbf{(C3)}\), still existence of optimal transport maps can be proved
(see for instance \cite{AmbrosioTrasporto} for an account of the theory)
and some regularity results have been obtained in \cite{Pratellieco}. Very recently, the strategy to approximate the Monge cost with \(c_3\) has been used in \cite{SanWan} to deduce some estimates for the Monge problem by proving a priori bounds on the transport maps which are uniform as $a \to 0$.
\medskip    \subsection{The case of Riemannian manifolds}
\label{sect:Riem}
Let us consider the case when $c=d^2/2$ on $X=Y=(M,g)$
a compact Riemannian manifold without boundary. 
As we have seen in the previous section, the MTW condition and some suitable convexity of the domains
are necessary to obtain regularity of optimal maps. Here we have also to face an additional problem:
indeed, while before the cost function was assumed 
to be smooth on $X\times Y$, in the case of a compact Riemannian manifold without boundary the function $d^2/2$ is never smooth on the whole $M\times M$ due to the presence of the cut-locus
 (see Section \ref{sect:cut} below for a more precise description).

Before focusing on all these issues, let us start with the following  example which will allow us to look at the MTW condition from a different perspective.

\begin{example}
We want to show how negative
sectional curvature is an obstruction to regularity (indeed even
to continuity) of optimal maps when $c=d^2/2$.
(We refer to \cite[Theorem
12.4]{Vil} for more details on the construction given below.)

Let $M=\H^2$ be the hyperbolic plane (or a compact quotient
thereof). Fix a point $O$ as the origin, and fix a local system of
coordinates in a neighborhood of $O$ such that the maps
$(x_1,x_2)\mapsto (\pm x_1,\pm x_2)$ are local isometries (it
suffices for instance to consider the model of the Poincar\'e
disk, with $O$ equal to the origin in $\R^2$). Then define the
points
$$
A^\pm=(0,\pm \e), \quad B^\pm=(\pm \e,0)\qquad\text{for some
$\e>0$}.
$$
Take a measure $\mu$ symmetric with respect to $0$ and
concentrated near $\{A^+\} \cup \{A^-\}$ (say $3/4$ of the total
mass belongs to a small neighborhood of $\{A^+\} \cup \{A^-\}$),
and a measure $\nu$ symmetric with respect to $0$ and concentrated
near $\{B^+\} \cup \{B^-\}$. Moreover assume that $\mu$ and $\nu$
are absolutely continuous, and have positive smooth densities.
We denote by $T$ the unique optimal transport map, and
we assume by contradiction that $T$ is continuous. By symmetry we
deduce that $T(O)=O$, and by counting the total mass there
exists a point $A'$ close to $A^+$ which is sent to a point $B'$
near, say, $B^+$.

But, by negative curvature (if $A'$ and $B'$ are close enough to
$A$ and $B$ respectively), Pythagoras Theorem becomes an
inequality: $d(O,A')^2 + d(O,B')^2<d(A',B')^2$, and this
contradicts the optimality of the transport map, as transporting
$A'$ onto $O$ and $O$ onto $B'$ would be more convenient than
transporting $A'$ onto $B'$ and letting $O$ stay at rest.
\end{example}

Now, the natural question is: how does the above example fit into
Ma, Trudinger and Wang and Loeper's results? The answer was
given by Loeper \cite{Loe1} where he noticed that, when $c=d^2/2$
on a Riemannian manifold $M$, the
$\MTW$ tensor satisfies the following remarkable identity:
\begin{equation} \label{eq:mtw sect}
\Smtw_{(x,x)}(\xi,\eta)=\frac 2 3 {\rm
Sect}_x([\xi,\eta]),
\end{equation}
where $\xi,\eta \in T_xM$ are two orthogonal unit vectors, and
${\rm Sect}_x([\xi,\eta])$ denotes the sectional curvature of the
plane generated by $\xi$ and $\eta$.

Combining \eqref{eq:mtw sect} with
Theorems \ref{thm:no connect counterexample} and \ref{thm:regular
cost}, we get the following important negative result:
\begin{theorem}
\label{thm:negativ curv counterexample} Let $(M,g)$ be a (compact)
Riemannian manifold, and assume there exist $x \in M$ and a
plane $P \subset T_xM$ such that ${\rm Sect}_x(P)<0$.  Then there
exist smooth positive probability densities $f$ and $g$
such that the optimal map for the cost $c=d^2/2$ is discontinuous.
\end{theorem}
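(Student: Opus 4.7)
The plan is to chain the identity \eqref{eq:mtw sect} with the obstructions identified in Theorems \ref{thm:regular cost} and \ref{thm:no connect counterexample}. Pick $x\in M$ and orthogonal unit vectors $\xi,\eta\in T_xM$ spanning a plane $P$ with ${\rm Sect}_x(P)<0$. The identity \eqref{eq:mtw sect} immediately gives
\[
\Smtw_{(x,x)}(\xi,\eta)=\frac{2}{3}{\rm Sect}_x(P)<0,
\]
so the cost $c=d^2/2$ fails $\MTW(0)$ at $(x,x)$ along the pair $(\xi,\eta)$.

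Next, work inside a small geodesically convex neighborhood $U$ of $x$ on which $c$ is smooth and satisfies $\mathbf{(C0)}$--$\mathbf{(C3)}$ (which is possible since $x$ lies outside its own cut-locus). On $U\times U$ the implication (iv)$\Rightarrow$(iii) in Theorem \ref{thm:regular cost} must fail: by running the proof given there in reverse one produces points $\bar x\in U$ close to $x$ and $y_0,y_1\in U$ close to $x$, lying along the $\cexp_{\bar x}$-images of the directions $\pm\e\eta$ based at a point $p=(\cexp_{\bar x})^{-1}(y)$ in the $\xi$-direction, for which the Loeper inequality \eqref{eq:mtw inequality} is violated. This is exactly what the strict negativity of the second variation
\[
\left.\frac{d^2}{ds^2}\right|_{s=0}\left.\frac{d^2}{dt^2}\right|_{t=0}c\bigl(\cexp_{\bar x}(t\xi),\cexp_{\bar x}(p+s\eta)\bigr)>0
\]
provides. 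Consequently the $c$-convex function $\psi_{\bar x,y_0,y_1}:=\max\{-c(\cdot,y_0)+c(\bar x,y_0),-c(\cdot,y_1)+c(\bar x,y_1)\}$ has a $c$-subdifferential at $\bar x$ that is not connected, because the intermediate points of the $c$-segment $[y_0,y_1]_{\bar x}$ fail to lie in $\partial_c\psi_{\bar x,y_0,y_1}(\bar x)$.

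Having exhibited a $c$-convex function with disconnected $c$-subdifferential, Theorem \ref{thm:no connect counterexample} applied in a chart around $\bar x$ furnishes smooth positive probability densities $\tilde f,\tilde g$, compactly supported inside $U$ and $T(U)$ respectively, for which the optimal transport map is discontinuous. The final task is to globalize: extend $\tilde f,\tilde g$ to smooth strictly positive densities $f,g$ on $M$ by adding a tiny smooth background of total mass $\delta$ supported away from $U$ and $T(U)$, renormalizing to unit mass. A stability argument for optimal plans (using compactness of Kantorovich minimizers and uniqueness coming from McCann's Theorem \ref{thm:McCann}) shows that for $\delta$ small enough the optimal map for $(f,g)$ still has the same qualitative structure inside $U$ as the one between $\tilde f,\tilde g$, hence remains discontinuous.

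The main obstacle is precisely this last globalization step: the cost $d^2/2$ is only smooth away from the cut-locus, so the local counterexample produced from $\Smtw<0$ must be patched into a globally defined pair of smooth positive densities on the compact manifold $M$ without destroying the local non-connectedness of the $c$-subdifferential that drives the discontinuity. Once one has chosen the perturbations so that the bulk of mass stays concentrated near the region where the Loeper inequality fails, all the remaining implications are algebraic consequences of \eqref{eq:mtw sect} and Theorems \ref{thm:regular cost}--\ref{thm:no connect counterexample}.
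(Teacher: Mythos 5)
Your chain of implications is exactly the one the paper intends: \eqref{eq:mtw sect} turns a plane of negative curvature into $\Smtw_{(x,x)}(\xi,\eta)<0$, the equivalences of Theorem \ref{thm:regular cost} convert the strict failure of condition (iv) into a $c$-convex function $\psi_{\bar x,y_0,y_1}$ whose $c$-subdifferential at $\bar x$ is disconnected (because $(\cexp_{\bar x})^{-1}\bigl(\partial_c\psi_{\bar x,y_0,y_1}(\bar x)\bigr)$ is a subset of the segment $[v_0,v_1]$ containing both endpoints but missing interior points), and Theorem \ref{thm:no connect counterexample} then produces the densities. This is precisely the paper's one-line proof. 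A cosmetic slip: the fourth derivative in your display is the quantity that becomes strictly \emph{positive} when $\Smtw<0$, so calling it ``strict negativity of the second variation'' is backwards, though the inequality you wrote is the right one.

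The one step that does not work as written is the final globalization. Weak stability of Kantorovich minimizers under a small perturbation of the marginals gives weak convergence of the optimal plans as $\delta\to 0$, but weak convergence of plans does not preserve discontinuity of the induced maps, so ``the optimal map for $(f,g)$ still has the same qualitative structure inside $U$'' does not follow from compactness plus uniqueness; you would need, at the very least, locally uniform convergence of the potentials together with a quantitative version of the disconnectedness that survives the perturbation. The standard way around this --- and the actual content of the construction behind Theorem \ref{thm:no connect counterexample}, cf.\ Theorem 12.7 in \cite{Vil} --- is to build $f$ and $g$ from the start as globally smooth, everywhere positive densities on $M$, heavily concentrated near $\bar x$ and near $\{y_0,y_1\}$ respectively, with only a tiny amount of mass near the midpoint of the $c$-segment $[y_0,y_1]_{\bar x}$; a mass-balance and intermediate-value argument then shows that a continuous optimal map would have to send a set of positive measure near $\bar x$ into the low-mass gap, a contradiction. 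This makes the perturbation/stability step unnecessary rather than merely delicate, and it is the route you should take to close the argument.
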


After this negative result, one could still hope to develop a
regularity theory on any manifold with nonnegative sectional
curvature. But this is not the case: as shown by Kim \cite{Kim}
(see also \cite{FRVsurfaces})
the regularity condition is strictly stronger than the condition
of nonnegativity of sectional curvatures. In conclusion, except
for some special cases, the optimal map is
non-smooth.

Some examples of manifolds satisfying the $\MTW$ condition have been found in \cite{Loe1,Loe2,KMC1,KMC2,FR,FRV}:
\begin{enumerate}
\item[-] $\R^n$ and $\T^n$ satisfy $\MTW(0)$.
\item[-] $\SP^n$, its quotients (like $\R\mathbb{P}^n$),
and its submersions (like $\C\mathbb{P}^n$ or $\mathbb H \mathbb
P^n$), satisfy $\MTW(1)$.
\item[-] Products of any of the
examples listed above (for instance, $\SP^{n_1} \times \ldots
\times \SP^{n_k} \times \R^\ell$ or $\SP^{n_1} \times
\C\mathbb{P}^{n_2} \times \T^{n_3}$) satisfy $\MTW(0)$.
\item[-] smooth perturbations of $\SP^n$ satisfy $\MTW(K)$ for some $K>0$.\\
\end{enumerate}

Now, in order to prove regularity of optimal transport maps on manifolds satisfying MTW one would like 
to localize 
 Theorems \ref{thm:C1a} and \ref{thm:C1a mtw0}.
 However, as we already mentioned above, one has to face a very nontrivial issue:
 the cost function $d^2/2$ is not everywhere smooth due to the presence of the cut-locus, hence to localize that 
 theorems one would like to know the validity of the so-called ``stay-away property'':
 \begin{equation}
 \label{eq:stayaway}
 T(x)\not\in \cut(x) \qquad \forall\,x \in M.
 \end{equation}
 (See the next section for a precise definition of $\cut(x)$.)
 While this property 
 has been proven to hold true in some special cases \cite{DelLoe,DG,FKM-spheres},
 it is still unknown in general and this creates several difficulties in the proof of regularity of optimal maps.
 In particular this is one of the reasons why, on perturbations of $\SP^n$,
 only continuity (and not higher regularity) of optimal maps is currently known \cite{FRV}.

Another issue is the convexity of the target: in the case of compact manifolds without boundary, the assumption that 
$-D_xc(x,Y)$
is convex for every $x \in X$ (see for instance Theorem \ref{thm:C1a}) corresponds
to the convexity of the cut-locus of the manifold when seen from any tangent space.
This shows that regularity of solutions is strongly related to the convexity of the cut-locus.
Actually, as we shall describe now, the MTW condition has even much stronger links with the cut-locus.


\medskip

\subsection{MTW v.s. cut-locus}
\label{sect:cut}
To explain the connection between the MTW condition and the convexity of the cut-locus, we start by recalling some basic definitions.

\label{subsect:cut} Given a tangent vector $v \in
T_xM$, the curve $\left(\exp_x(tv)\right)_{t\geq 0}$ is a geodesic
defined for all times, but in general is not minimizing for large
times. On the other hand, it is possible to prove that
$\exp_x(tv)$ is always minimizing between $x$ and $\exp_x(\e v)$
for $\e>0$ sufficiently small. We define the \textit{cut-time}
$t_C(x,v)$ as
$$
t_C(x,v):=\inf \Bigl\{t>0 \, : \, s \mapsto \exp_x(sv) \text{ is
not minimizing between $x$ and $\exp_x(t v)$} \Bigr\}.
$$
Given $x \in M$, we define the \textit{cut-locus} of $x$
$$
\cut(x):=\Bigl\{ \exp_x\bigl(t_C(x,v)v\bigr)\, :\, v \in T_xM,\,
|v|_x=1\Bigr\},
$$
the \textit{tangent cut-locus} of $x$
$$
\TCL(x) := \bigl\{ t_C(x,v)v\, : \, v \in T_xM,\, |v|_x=1\bigr\},
$$
and the \textit{injectivity domain} of the exponential map at $x$
$$
\I(x) := \bigl\{ tv\, : \, 0\leq t< t_C(x,v),\ v\in T_xM,\,
|v|_x=1\bigr\}.
$$
With these definitions, we have
$$
\cut(x) = \exp_x \bigl(\TCL(x)\bigr),\qquad \TCL(x)=\partial
\bigl(\I(x)\bigr).
$$
For instance, on  the sphere $\SP^n$, $t_C(x,v)=\pi/|v|_x$, \(\cut(x)=\{-x\}\),  \(\I(x)= B_\pi(0)\), and  \(\TCL(x)=\partial B_\pi(0)\).

It is possible to prove that, if $y \not\in \cut(x)$, then $x$ and
$y$ are joined by a unique minimizing geodesic. The converse is
close to be true: $y\not\in\cut(x)$ if and only if there are
neighborhoods $U$ of $x$ and $V$ of $y$ such that any two points
$x'\in U$ and $y'\in V$ are joined by a unique minimizing geodesic.
In particular $y\not\in\cut(x)$ if and only if
$x\not\in\cut(y)$.\\

%
%

The fact that a point $y \in M$ belongs to $\cut(x)$ is a
phenomenon which is captured by the regularity of the distance
function. Indeed, it can be proven that the following holds (see
for instance \cite[Proposition 2.5]{CEMCS}):
\begin{enumerate}
\item[(a)] The function $d(x,\cdot)^2$ is smooth
in a neighborhood of $y$ if and only if $y \not\in \cut(x)$.
\item[(b)] The function $d(x,\cdot)^2$ has an upward Lipschitz cusp at $y$
if and only if $y \in \cut(x)$ and there are at least two
minimizing geodesics between $x$ and $y$.
\item[(c)] The
function $d(x,\cdot)^2$ is $C^1$ at $y$ and its Hessian has an
eigenvalue $-\infty$ if and only if $y \in \cut(x)$ and there is a
unique minimizing geodesics between $x$ and $y$. 
\end{enumerate}
In the above statement, having an ``upward cusp'' means that there
exist two vectors $p_1\neq p_2$ both belonging to the
supergradient of $f:=d(x,\cdot)^2$ at $y$: writing everything in
charts, we have
$$
\{p_1,p_2 \} \subset \p^+f(y):=\bigl\{p \,:\, f(y+ v) \leq f(y) +
\<p, v\> +o(|v|) \quad \forall\,v\bigr\},
$$
that is $f$ is locally below the function $v \mapsto
f(y)+\min\{\<p_1, v\>,\<p_2, v\>\} + o(|v|)$ near $y$. Hence (b) 
corresponds to roughly say that the second derivative (along the
direction $p_2-p_1$) of $d(x,\cdot)^2$ at $y$ is $-\infty$. (The
fact that there is an upward cusp means that one of the second
directional derivatives is a negative delta measure.)

Furthermore, saying that ``Hessian has an eigenvalue $-\infty$''
means that (always working in charts)
$$
\liminf_{|v| \to 0} \frac{f(y+v) - 2 f(y) +f(y-v)}{|v|^2}
=-\infty.
$$
Thus, the above description of the cut-locus in terms of the
squared distance can be informally summarized as follows:
\begin{equation}
\label{eq:belong cut}
 y \in \cut(x)
\quad \Leftrightarrow \quad D_{yy} d^2(x,y)v\cdot
v=-\infty \quad \text{for some }v \in T_yM\,.
\end{equation}
\\

Based on this observation, in \cite{LV} Loeper and
Villani noticed the existence of a deep connection between the
$\MTW$ condition and the geometry of the cut-locus.
The idea is
the following: fix $x \in M$, and let $v_0,v_1 \in \I(x)$.
Consider the segment $(v_t)_{t \in [0,1]}$, with $v_t:=(1-t) v_0
+t v_1$. Set further $y_t :=\exp_x(v_t)$. Since $v_0,v_1 \in
\I(x)$ we have
$$
y_0,y_1 \not \in \cut(x).
$$
In particular $c(x,\cdot):=d(x,\cdot)^2/2$ is smooth in a
neighborhood of $y_0$ and $y_1$. Assume now that $\MTW(0)$
holds. Thanks to Theorem \ref{thm:regular cost}(iv), we
know that the function
$$
\eta \mapsto \bigl\<D_{xx}c\bigl(\bar x,\exp_{\bar
x}(p+\eta)\bigr)\cdot \xi,\xi\bigr\>
$$
is concave for all $\eta \perp \xi$. (This is just a formal
argument, as the theorem applies a priori only if $\exp_{\bar
x}(p+\eta) \not \in \cut(\bar x)$.) Applying this fact along the
segment $(v_t)_{t \in [0,1]}$ and exploiting the smoothness of
$d(x,\cdot)^2$ near $y_0$ and $y_1$, for $\xi \perp
(v_1-v_0)$  we obtain
$$
\inf_{t \in [0,1]} \bigl\<D_{xx} d^2(x, y_t)\cdot \xi,\xi\bigr\>
\geq \min\bigl\{\bigl\<D_{xx} d^2(x, y_0)\cdot
\xi,\xi\bigr\>,\,\bigl\<D_{xx} d^2(x, y_1)\cdot \xi,\xi\bigr\>
\bigr\} \geq C_0,
$$
for some constant $C_0 \in \R$. Hence, if we forget for a moment
about the orthogonality assumption between $v_1-v_0$ and $\xi$, we
see that the above equation implies that $x \not \in \cut(y_t)$
for all $t \in [0,1]$ (compare with \eqref{eq:belong cut}), which
by symmetry gives
$$
y_t \not \in \cut(x) \qquad \forall \, t \in [0,1],
$$
or equivalently
$$
v_t \not \in \TCL(x) \qquad \forall \, t \in [0,1].
$$
Since $v_0,v_1 \in \I(x)$, we have obtained
$$
v_t \in \I(x) \qquad \forall \, t \in [0,1],
$$
that is $\I(x)$ is convex. In conclusion, this formal argument
suggests that the $\MTW$ condition (or a variant of it) should
imply that all tangent injectivity domains $\I(x)$ are convex, for
every $x \in M$.\\

This would be a remarkable property. Indeed,
usually the only regularity results available for $\I(x)$ say that
$\TCL(x)$ is just Lipschitz \cite{IT}. Moreover, such a
result would be of a global nature, and not just local like a
semi-convexity property.
Unfortunately the argument described above is just formal and up
to now there is no complete result in that direction. Still, one
can prove some rigorous results in some special cases \cite{LV,FR,FRVsurfaces,FRV,FRVnec}.

As we already mentioned before,
the convexity of the tangent cut loci is not only an interesting geometric property but 
it is also important for the regularity of optimal transport maps,
since it corresponds to the assumption that $-D_xc(x,Y)$
is convex for every $x \in X$. 
Indeed, as shown in \cite{FRVnec}, both the MTW condition
and the convexity of the tangent cut-loci are necessary (and in some cases
sufficient) conditions for the 
continuity of optimal transport maps on manifolds.

\medskip    \subsection{Partial regularity}\label{secPR}

In the case of the classical Monge-Amp\`ere equation we saw that convexity 
of the target is a necessary condition for the regularity of Brenier solutions to the Monge-Amp\`ere equation.
Now, in the case of a general cost, in addition to some suitable
convexity of the target one needs to assume the validity of the MTW condition.
This condition is however known to be true only for few examples of costs (for instance, all costs  of the form \(|x-y|^p\) with \(p\notin(-2,1)\cup \{2\}\) do not satisfy the MTW condition, see however \cite{CDMN} for some ``perturbative'' result in this case). Moreover only few manifolds are known to satisfy it (essentially just the ones listed in Section \ref{sect:Riem}), and for instance it is known to  fail on sufficiently  flat ellipsoids \cite{FRVsurfaces}. \\

Let us also  recall that, in the case of a Riemannian manifold, the MTW condition
implies that the sectional curvature is nonnegative at every point (see \eqref{eq:mtw sect}).
Therefore, if we consider a compact manifold $(M,g)$ with negative sectional curvature
we have that the MTW condition fails \emph{at every point}.
This fact could make one suspect that the transport map could be extremely irregular.
However, as shown by the authors
in  \cite{DFpartial}, this negative picture cannot happen:

\begin{theorem}
\label{thm:c}
Let $X,Y\subset \R^n$ be two bounded open sets, and let
$f:X\to \R^+$ and $g:Y\to \R^+$ be two
continuous
probability densities, respectively 
bounded away from zero and infinity on $X$ and $Y$.
Assume that the cost $c:X\times Y \to \R$ satisfies {\bf (C0)}-{\bf (C3)}, and denote by $T:X\to Y$ the unique optimal transport map sending $f$ onto $g$.
Then there exist two relatively closed sets $\Sigma_X \subset X,\Sigma_Y\subset Y$ of measure zero such that
$T:X\setminus \Sigma_X \to Y\setminus \Sigma_Y$ is a homeomorphism of class $C_{\rm loc}^{0,\beta}$ for any $\beta<1$.
In addition, if $c\in C^{k+2,\alpha}_{\rm loc}(X\times Y)$, $f\in C^{k,\alpha}_{\rm loc}(X)$, and $g\in C^{k,\alpha}_{\rm loc}(Y)$
for some $k \geq 0$ and $\alpha \in (0,1)$, then $T:X\setminus \Sigma_X\to Y\setminus \Sigma_Y$ is a
diffeomorphism of class $C_{\rm loc}^{k+1,\alpha}$.
\end{theorem}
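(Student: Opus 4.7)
The plan is to extend the partial regularity strategy of Theorem \ref{thm:partialMA} to the general cost setting, by exploiting the fact that the MTW tensor is a fourth-order object in the cost and therefore becomes invisible on sufficiently small scales. After normalization and rescaling, the problem will degenerate to a classical Monge-Amp\`ere equation on a normalized convex set, to which Caffarelli's theory applies.

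First I would introduce the regular set. For any point $x_0 \in X$ at which the $c$-convex potential $u$ is differentiable, set $y_0 := T(x_0) = \cexp_{x_0}(\nabla u(x_0))$ and define the $c$-section
\[
S^c_t(x_0) := \bigl\{ z \in X : u(z) + c(z,y_0) \leq u(x_0) + c(x_0,y_0) + t \bigr\}.
\]
Then let
\[
{\rm Reg}_X := \bigl\{ x_0 \in X : u \text{ is differentiable at } x_0 \text{ and } S^c_t(x_0) \Subset X \text{ for some } t>0 \bigr\},
\]
and $\Sigma_X := X \setminus {\rm Reg}_X$. One checks easily that ${\rm Reg}_X$ is open; moreover $|\Sigma_X|=0$ follows from the semiconvexity of $u$ (Remark \ref{rmk:semiconv}) together with Theorem \ref{thm:Alex convex}, since at an almost every point $x_0$ where the Alexandrov Hessian of $u + c(\cdot,y_0)$ is positive definite (which holds a.e.\ thanks to the $L^\infty$ bounds on $f,g$ and equation \eqref{eq:MAcost}), the sections $S^c_t(x_0)$ shrink strictly to $\{x_0\}$ as $t\to 0$.

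Next, at a regular point $x_0 \in {\rm Reg}_X$ I would perform a local normalization. Using assumption \textbf{(C3)} and affine transformations of the source and target, choose coordinates in which $x_0=y_0=0$, $c(0,0)=0$, $D_xc(0,0)=D_yc(0,0)=0$, and $-D_{xy}c(0,0)=\Id$. Subtracting an affine function from $u$ (which does not affect $T$) further gives $u(0)=0$, $\nabla u(0)=0$. Taylor expansion at the origin then yields
\[
c(x,y)=-x\cdot y + \mathrm O\bigl(|x|^2|y|+|x||y|^2\bigr).
\]
For $t>0$ small, John's Lemma (Lemma \ref{lem:john}) produces an affine map $A_t$ normalizing $S^c_t(0)$, and the rescaled potential $u_t(z) := t^{-1}\bigl[u(A_t z) + c(A_t z, 0)\bigr]$ satisfies a Monge-Amp\`ere type equation on a normalized convex domain, with rescaled cost $c_t$ converging locally uniformly to $-x\cdot y$ and right-hand side converging to the constant $f(0)/g(0)$ as $t\to 0^+$.

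The heart of the argument is a compactness-contradiction scheme. Using a stability result in the spirit of Theorem \ref{thm:stability}, adapted to the family $\{u_t, c_t\}$, the rescaled potentials are precompact in the uniform topology, and any limit $u_\infty$ is an Alexandrov solution of the classical equation $\det D^2 u_\infty = f(0)/g(0)$ with zero boundary data on a normalized convex set. By Theorems \ref{thm:calfa}-\ref{calpfanorm}, $u_\infty \in C^{1,\alpha}$ with universal constants. An iterated improvement-of-flatness argument transfers this regularity back, at dyadically smaller scales, to $u$ itself, yielding $T \in C^{0,\beta}_{\rm loc}({\rm Reg}_X)$ for every $\beta<1$. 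Once $T$ is H\"older continuous, equation \eqref{eq:MAcost} becomes uniformly elliptic with H\"older coefficients, and a standard Schauder bootstrap delivers the $C^{k+1,\alpha}$ regularity claim. The symmetric set $\Sigma_Y$ is defined analogously for the dual optimal transport problem (with $T^{-1}$ in place of $T$), giving the homeomorphism statement and $|\Sigma_Y|=0$. The main obstacle is the compactness step: proving uniform $c_t$-convexity and uniform bounds on $u_t$, and showing convergence strong enough to transfer the Monge-Amp\`ere measure to the limit. This is precisely the step that the absence of MTW would usually prevent, but here it is by-passed because the $c_t$-convexity constraint degenerates, at rate $\mathrm O(\sqrt t)$, to ordinary convexity, so Caffarelli's theory becomes applicable in the limit.
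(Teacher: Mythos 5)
Your overall architecture coincides with the one the paper follows: work at (almost every) point where the semiconvex potential $u$ is twice differentiable, rescale so that the cost degenerates to $-x\cdot y$ and the densities to constants, invoke Caffarelli's theory for the limiting classical Monge--Amp\`ere problem, and transfer the regularity back by an iterated improvement of flatness; the homeomorphism claim then follows by symmetry in $x$ and $y$. Up to the conclusion $u\in C^{1,\beta}$ near $\bar x$ (hence $T\in C^{0,\beta}_{\rm loc}$ on the regular set), your proposal is essentially the paper's Proposition \ref{c1alpha}.

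There is, however, a genuine gap in your final step. You assert that ``once $T$ is H\"older continuous, equation \eqref{eq:MAcost} becomes uniformly elliptic with H\"older coefficients, and a standard Schauder bootstrap delivers the $C^{k+1,\alpha}$ regularity.'' This is false: uniform ellipticity of the linearized Monge--Amp\`ere operator requires two-sided bounds on the eigenvalues of $D^2u-\mathcal A(x,\nabla u)$ (see Remark \ref{rmk:degen}), i.e.\ an a priori $C^{1,1}$ bound, and $u\in C^{1,\beta}$ with $\beta<1$ gives no control whatsoever on $D^2u$. So the Schauder iteration cannot be started from H\"older continuity of $T$. The paper bridges this gap with a separate argument (Proposition \ref{C2alpha}): once $u\in C^1$, the Fr\'echet subdifferential is a singleton everywhere and \eqref{eq:rel subdiff2} becomes the equality $\partial_cu(x)=\cexp_x(\partial^-u(x))$, which restores a comparison principle for $u$; a second approximation scheme, comparing $u$ with smooth solutions of the classical Monge--Amp\`ere equation in the spirit of \cite{Caf4}, then yields $u\in C^{2,\sigma'}$ on a smaller neighborhood, and only at that stage does standard elliptic regularity take over. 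This intermediate $C^{1,\beta}\to C^{2,\sigma'}$ step is precisely where the absence of the MTW condition bites (no local PDE estimate is available), and it cannot be replaced by the bootstrap you describe. Also, a minor imprecision: you only impose differentiability of $u$ and compactness of some $c$-section in your definition of ${\rm Reg}_X$, but the blow-up argument needs second-order differentiability at the base point to guarantee that the rescaled potentials stay close to a parabola; this is why the paper anchors the construction at points of twice differentiability (Remark \ref{rmk:semiconv} and Theorem \ref{thm:Alex convex}), which still form a set of full measure.
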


By suitably localizing this result,
in \cite{DFpartial} we could also prove the following:

\begin{theorem}
\label{cor:M}
Let $M$ be a smooth Riemannian manifold, and let $f,g:M\to \R^+$ be two continuous probability densities, locally bounded away from zero and infinity on $M$.
Let $T:M\to M$ denote the optimal transport map for the cost $c=d^2/2$ sending $f$ onto $g$.
Then there exist two closed sets $\Sigma_X,\Sigma_Y\subset M$ of measure zero such that
 $T:M\setminus \Sigma_X \to M\setminus \Sigma_Y$ is a homeomorphism of class $C_{\rm loc}^{0,\beta}$ for any $\beta<1$.
 In addition, if both $f$ and $g$ are of class $C^{k,\alpha}$, then
 $T:M\setminus \Sigma_X \to M\setminus \Sigma_Y$ is a diffeomorphism of class $C_{\rm loc}^{k+1,\alpha}$.
\end{theorem}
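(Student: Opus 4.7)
The plan is to reduce Theorem \ref{cor:M} to the Euclidean partial regularity result Theorem \ref{thm:c}, the main difficulty being that the cost $c=d^2/2$ is not smooth on all of $M\times M$ due to the presence of the cut locus. By Theorem \ref{thm:McCann}, the optimal map is of the form $T(x)=\exp_x(\nabla u(x))$ for some $c$-convex function $u:M\to\R\cup\{+\i\}$. Whenever $(x_0,y_0)$ satisfies $y_0\notin\cut(x_0)$, the cost $c$ is smooth in local coordinates and satisfies conditions \textbf{(C0)}--\textbf{(C3)}: (C1) and (C2) follow from the uniqueness of the minimizing geodesic joining such points, while (C3) comes from the invertibility of the differential of the exponential map away from the conjugate locus. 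This motivates introducing the \emph{regular set}
\[
R_X:=\{x_0\in M : T(x_0)\notin\cut(x_0)\},
\]
with the intention of applying Theorem \ref{thm:c} in a small neighborhood of each $x_0\in R_X$.

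The first key step is to prove that $|M\setminus R_X|=0$. Since $u$ is semi-convex (Remark \ref{rmk:semiconv}), Aleksandrov's theorem (Theorem \ref{thm:Alex convex}) ensures that $u$ is twice differentiable at a.e.~$x_0\in M$. At any such point, the relation $T(x_0)\in\partial_c u(x_0)$ says that $x\mapsto u(x)+c(x,T(x_0))$ attains its minimum at $x_0$, which combined with twice-differentiability of $u$ forces $D^2u(x_0)+D_{xx}c(x_0,T(x_0))\ge 0$ as symmetric matrices at $x_0$. On the other hand, by the symmetry of the cut locus ($y\in\cut(x)\Leftrightarrow x\in\cut(y)$) together with the eigenvalue $-\infty$ description \eqref{eq:belong cut}, if $T(x_0)\in\cut(x_0)$ then there exists $v\in T_{x_0}M$ with $D_{xx}c(x_0,T(x_0))v\cdot v=-\infty$; this would require $D^2u(x_0)v\cdot v=+\infty$, contradicting the finiteness of the Hessian provided by Aleksandrov's theorem. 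Hence $T(x_0)\notin\cut(x_0)$ for a.e.~$x_0$, i.e.~$|M\setminus R_X|=0$.

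For each $x_0\in R_X$ I then select open neighborhoods $U\ni x_0$ and $V\ni T(x_0)$ such that $c\in C^\infty(\overline U\times\overline V)$ satisfies \textbf{(C0)}--\textbf{(C3)}. By the restriction property of optimal transport (a consequence of the uniqueness in Theorem \ref{cbrenier}: if $T$ is optimal from $f\,dx$ to $g\,dy$ then $T|_A$ is optimal from $f|_A\,dx$ to $T_\sharp(f|_A\,dx)$ for every Borel $A$), the restriction $T|_U$ solves the optimal transport problem for $c$ between $f|_U\,dx$ and $T_\sharp(f|_U\,dx)$. After further shrinking $U$ (exploiting that a.e.~$x_0\in R_X$ is a Lebesgue density point of $T^{-1}(V)$) and using the Jacobian identity $|\det\nabla T|=f/(g\circ T)\in(0,\infty)$ a.e., one can arrange the source and target densities of this localized problem to be bounded away from $0$ and $\infty$ on $U$ and $V$, respectively. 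Theorem \ref{thm:c} then produces relatively closed negligible subsets $\Sigma_U\subset U$ and $\Sigma_V\subset V$ outside of which $T$ is a $C^{0,\beta}_{\rm loc}$ homeomorphism (and a $C^{k+1,\alpha}_{\rm loc}$ diffeomorphism if $f,g$ are assumed to be $C^{k,\alpha}$). A countable Lindel\"of cover of $R_X$ by such neighborhoods, together with the choice $\Sigma_X:=(M\setminus R_X)\cup\bigcup_i\Sigma_{U_i}$ and $\Sigma_Y:=T(\Sigma_X)$ (the latter negligible by the Jacobian bound and the a.e.~injectivity of $T$ from Theorem \ref{cbrenier}), yields the conclusion. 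The main obstacle will be rigorously implementing this last localization step: turning the restriction-property identification into a genuine optimal transport problem on \emph{open} sets whose source and target densities fulfill the uniform bounds required by Theorem \ref{thm:c}, which requires a delicate handling of the ``stray'' mass $f|_{U\setminus T^{-1}(V)}$ via a Lebesgue density/approximation argument.
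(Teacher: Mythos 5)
Your proposal is correct and follows essentially the same route as the paper (i.e.\ as \cite{DFpartial}): the decisive step --- that $T(x_0)\notin\cut(x_0)$ for a.e.\ $x_0$, obtained by combining Alexandrov twice-differentiability of the semiconvex potential with the $-\infty$-eigenvalue characterization \eqref{eq:belong cut} of the cut locus --- is exactly the argument used there, after which one localizes Theorem \ref{thm:c} near such points. The only adjustments needed are bookkeeping: take $\Sigma_X$ to be the complement of the open set $\bigcup_i\bigl(U_i\setminus\Sigma_{U_i}\bigr)$ so that it is genuinely closed, and produce $\Sigma_Y$ by running the symmetric argument for the inverse optimal map $T^*$ sending $g$ onto $f$ rather than defining it as $T(\Sigma_X)$.
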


As discussed before, when the MTW condition fails the obstruction to the regularity of optimal maps is local, while in the quadratic case it was a global obstruction (namely the non-convexity of the target which do not allow to have a good control on the Monge-Amp\`ere measure of \(u\)). In the quadatic cost case, in   \cite{F-partial, FK-partial} it was shown that on a big set there is still a good  control on the Monge-Amp\`ere measure of the potential \(u\) allowing to apply the local theory of classical Monge-Amp\`ere equation, see the sketch of the proof of Theorem \ref{thm:partialMA}. However, the failure of the MTW condition does not allow us to use any local regularity estimate
for the PDE, therefore  a completely new strategy with respect to \cite{F-partial,FK-partial} has to be used.\\

The rough idea is the following: First of all recall that, thanks to Theorem \ref{cbrenier},
the optimal transport map $T$ is of the form $T_u(x)=\cexp_x(\nabla u(x))$
for some $c$-convex function $u$. Then, if $\bar x$ is a point where $u$
is twice differentiable (see Remark \ref{rmk:semiconv}), around that point $u$ looks like a parabola. In addition, by looking close enough to $\bar x$,
the cost function $c$ will be very close to the linear one and the densities will be almost constant there.
Hence $u$ is close to a convex function $v$ solving an optimal transport problem with linear cost and constant densities. In addition, since $u$ is close to a parabola, so is $v$. Hence, by Caffarelli's regularity theory, $v$ is smooth  and we can use this information to deduce that $u$ is even closer to a second parabola (given by the second order Taylor expansion of $v$ at \(\bar x\)) inside a small neighborhood.
By rescaling back this neighborhood at scale $1$ and iterating this construction, we obtain that $u$ is $C^{1,\beta}$ at \(\bar x\) for every $\beta \in (0,1)$.
Since this argument can be applied at every point in a neighborhood of \(\bar x\), we deduce that $u$ is $C^{1,\beta}$ there. This is summarized in the following:
\begin{proposition}\label{c1alpha}Let \(\mathcal C_{1}\) and \(\mathcal C_{2}\) be two closed sets satisfying
\begin{equation*}
B_{1/3}\subset \mathcal C_{1} ,\mathcal C_{2}\subset B_{3},
\end{equation*}
let \(f, g\) be 
 two densities supported in $\mathcal C_1$ and $\mathcal C_2$ respectively, and
let \(u: \mathcal C_1 \to \R\) be a \(c\)-convex function such that $\p_cu(\mathcal C_1)\subset B_3$
and \((T_{u})_\sharp (f\, dx) =g\,dy\). Then, for every \(\beta\in (0,1)\) there  exists a constant \(\eta_{0}>0\) such that the following holds: if 
$$ \|f-  \mathbf 1_{\mathcal C_1} \|_{\infty}+ \| g- \mathbf 1_{\mathcal C_2}\|_{\infty} 
 + \|c(x,y)+x\cdot y\|_{C^{2}(B_{3}\times B_{3})}
 + \left\|u-\frac{1}{2} |x|^{2}\right\|_{C^{0}(B_{3})} \leq \eta_{0},
 $$
 then \(u\in C^{1,\beta}(B_{1/6})\).
\end{proposition}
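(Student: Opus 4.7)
The plan is a quantitative improvement-of-flatness iteration at $x=0$, repeated at every base point in $B_{1/6}$. First, by a stability argument for $c$-convex potentials under perturbations of the cost and the densities (proved by compactness/contradiction, using uniqueness of the $c$-Brenier map from Theorem \ref{cbrenier}), the function $u$ is $C^0$-close on $B_2$ to the Brenier solution $v$ of the limiting quadratic-cost transport problem with densities $\mathbf 1_{\mathcal C_1}, \mathbf 1_{\mathcal C_2}$: $\|u-v\|_{C^0(B_2)}\leq \omega(\eta_0)$ for a universal modulus with $\omega(\eta_0)\to 0$ as $\eta_0\to 0$. Since $v$ is convex and $C^0$-close to the uniformly convex paraboloid $\tfrac12|x|^2$, a direct argument shows that $v$ is strictly convex on $B_1$. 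Its Monge-Amp\`ere measure equals the density $f(x)/g(\nabla v(x))$, bounded between two constants close to $1$; thus Caffarelli's theory (Theorem \ref{thm:calfa}) gives $v\in C^{1,\alpha}_{\rm loc}$ with universal estimates near $0$.

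To obtain a usable second-order Taylor expansion, introduce an auxiliary function $w$ solving $\det D^2 w = 1$ on a normalized convex section of $v$ at $0$ with the same boundary values; $w$ is smooth by Theorem \ref{existencesmooth}, and by Theorem \ref{thm:stability} it is $C^0$-close to $v$ with closeness controlled by $\|f/g(\nabla v) - 1\|_\infty = O(\eta_0)$. Since $w$ is smooth and $C^0$-close to $\tfrac12|x|^2$, its second-order Taylor polynomial $P(x)=w(0)+\nabla w(0)\cdot x + \tfrac12\langle Ax,x\rangle$, with $A$ close to $\Id$, satisfies $|w(x)-P(x)|\leq C|x|^3$ on a universal ball. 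Combining $u\approx v\approx w\approx P$ yields $\|u-P\|_{C^0(B_{r_0})}\leq \omega'(\eta_0)+Cr_0^3$. Choosing a universal $r_0\in(0,1/6)$ with $Cr_0^3\leq \tfrac12 r_0^{2+\beta}$ and then $\eta_0$ small enough that $\omega'(\eta_0)\leq \tfrac12 r_0^{2+\beta}$ gives
\[
\|u-P\|_{C^0(B_{r_0})}\leq r_0^{2+\beta}.
\]

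Now rescale: choose an affine change of variables $M$ normalizing $A$ to $\Id$, set $\tilde u(y):=r_0^{-2}\bigl(u(r_0 M y)-P(r_0 M y)\bigr)$, and rescale the cost, the densities, and their supports accordingly. A direct verification shows that $\tilde u$ satisfies the hypotheses of the proposition on $B_3$ with $\eta_0$ replaced by some $\tilde \eta_0\leq Cr_0^\beta\eta_0\ll\eta_0$: the perturbation of the rescaled cost from $-y\cdot z$ is of order $r_0^2\eta_0$; the density perturbations are preserved up to bounded factors; the rescaled support sets still sandwich between $B_{1/3}$ and $B_3$ because $A\approx \Id$; and $\tilde u$ is $r_0^\beta$-close to $\tfrac12|y|^2$. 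Iterating at the scales $r_0^k$ produces a limiting affine function $\ell_\infty$ and a positive-definite matrix $A_\infty$ with $|u(x)-\ell_\infty(x)-\tfrac12\langle A_\infty x,x\rangle|\leq C|x|^{2+\beta}$ near $0$, giving in particular pointwise $C^{1,\beta}$ at $0$. The same argument centered at any $x_0\in B_{1/6}$ (after subtracting a $c$-supporting affine function at $x_0$ and restricting to a section of universal size, the hypotheses remain valid with the same $\eta_0$) yields a uniform pointwise estimate, which upgrades to $u\in C^{1,\beta}(B_{1/6})$ by the standard criterion \cite[Lemma 3.1]{DFconvex}.

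The main obstacle is closing the iteration: one must verify that the rescaled support sets satisfy $B_{1/3}\subset \tilde{\mathcal C}_i\subset B_3$ and that the accumulated affine changes of variable $M_1 M_2\cdots$ converge to an invertible limit. Both issues reduce to controlling $A-\Id$ by a quantity strictly smaller than the geometric improvement factor $r_0^\beta$, which follows by interpolation between the $C^0$-closeness of $v$ to $\tfrac12|x|^2$ and the $C^{1,\alpha}$ bounds on $v$. A secondary technical point is the initial stability estimate $u\approx v$; the cleanest route is a contradiction argument, extracting along $\eta_k\to 0$ uniform limits of $u_k$ and $v_k$ which both solve the same limiting Brenier problem and hence coincide.
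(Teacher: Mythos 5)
Your architecture --- a compactness/stability step identifying $u$ with the Brenier solution $v$ of the limiting linear-cost problem, approximation of $v$ by a smooth solution $w$ of $\det D^2w=1$ to extract a quadratic model, a one-scale excess estimate $\|u-P\|_{C^0(B_{r_0})}\le r_0^{2+\beta}$, and iteration over the scales $r_0^k$ --- is exactly the strategy sketched in the paper and carried out in \cite{DFpartial}, and the first half of your argument (strict convexity of $v$, the use of Theorems \ref{thm:calfa}, \ref{existencesmooth}, \ref{thm:stability} and Pogorelov's estimate to get $|w-P|\le C|x|^3$, and the resulting one-step decay) is sound.

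The gap is in closing the iteration. You assert that the rescaled problem satisfies the hypotheses with $\tilde\eta_0\le Cr_0^{\beta}\eta_0\ll\eta_0$ and that the scheme converges to a limit matrix $A_\infty$ with $|u-\ell_\infty-\tfrac12\langle A_\infty x,x\rangle|\le C|x|^{2+\beta}$. Neither holds. The density error $\|f-\mathbf 1_{\mathcal C_1}\|_\infty$ is scale invariant (as you yourself note, it is only ``preserved up to bounded factors''), and the $C^2$ perturbation of the cost is invariant under parabolic rescaling since second derivatives do not rescale; hence the data error injected at each step is of order $\eta_0$ at \emph{every} scale, the excess over the best paraboloid stabilizes at order $\eta_0$ instead of decaying, and the matrices $A_k$ need not converge. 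Worse, the claimed conclusion is false under the stated hypotheses: a uniform second-order expansion with error $C|x-x_0|^{2+\beta}$ at every point of $B_{1/6}$ would force $x_0\mapsto A_{x_0}$ to be continuous and hence $\det D^2u=f/g(\nabla u)$ to be continuous, which is incompatible with densities that are merely $L^\infty$-close to indicator functions (this is precisely why the proposition asserts only $C^{1,\beta}$, with $\beta<1$ arbitrary at the price of shrinking $\eta_0$). The iteration must therefore be set up to propagate only first-order information --- e.g.\ affine functions $\ell_k$ with $\|u-\ell_k\|_{L^\infty(B_{\rho^k})}\le \rho^{k(1+\beta)}$ and $|\nabla\ell_{k+1}-\nabla\ell_k|\le C\rho^{k\beta}$, obtained by converting the one-scale estimate $\|u-P\|\le C(\eta_0)r^2+Cr^3$ into a gain on the $C^{1,\beta}$ excess while discarding the drifting second-order coefficients --- rather than to produce a convergent sequence of paraboloids.
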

Once this result is proved, we know that $\partial^-u$ is a singleton at every point, so it follows from \eqref{eq:rel subdiff2} that
$$
\partial_cu(x) = \cexp_x (\partial^-u(x)).
$$
(The above identity is exactly what in general may fail for general $c$-convex functions, unless the MTW condition holds.)
Thanks to this fact we obtain that $u$ enjoys a comparison principle,
and this allows us to use a second approximation argument with solutions of the classical Monge-Amp\`ere equation
(in the spirit of \cite{Caf4}) to conclude that $u$ is $C^{2,\sigma'}$ in a smaller neighborhood, for some $\sigma'>0$.
Then higher regularity follows from standard elliptic estimates.
\begin{proposition}\label{C2alpha} Let \(u,f,g\) be as in Proposition \ref{c1alpha}, and assume in addition that \(c\in C^{k,\alpha}(B_{3}\times B_{3})\) and $f,g \in C^{k,\alpha}(B_{1/3})$
for some $k \geq 0$ and $\alpha \in (0,1)$.
There exists a small constant \(\eta_{1}\) such that if
$$
 \|f-  \mathbf 1_{\mathcal C_1} \|_{\infty}+ \| g- \mathbf 1_{\mathcal C_2}\|_{\infty} +
 \|c(x,y)+x\cdot y\|_{C^{2}(B_{3}\times B_{3})}+
\left\|u-\frac{1}{2} |x|^{2}\right\|_{C^{0}(B_{3})}\le \eta_{1},$$
then \(u \in C^{k+2,\alpha}(B_{1/6})\).
\end{proposition}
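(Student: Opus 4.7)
The plan is to bootstrap the $C^{0}$ smallness hypothesis into full $C^{k+2,\alpha}$ regularity in three stages: first invoke Proposition \ref{c1alpha} to get $C^{1,\beta}$ regularity and extract a comparison principle; second, run a Caffarelli-style approximation by solutions of the classical Monge-Amp\`ere equation on small sections to reach $C^{2,\sigma'}$ regularity; third, bootstrap to $C^{k+2,\alpha}$ by Schauder theory applied to the linearized operator.

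First, choose $\eta_{1}\le \eta_{0}(\beta)$ for some $\beta$ close to $1$, so that Proposition \ref{c1alpha} directly produces $u \in C^{1,\beta}(B_{1/6})$. Since $u$ is then everywhere differentiable, the Fr\'echet subdifferential $\partial^{-}u(x)$ reduces to $\{\nabla u(x)\}$, and the inclusion \eqref{eq:rel subdiff2} upgrades to the equality $\partial_{c}u(x)=\{\cexp_{x}(\nabla u(x))\}$. As the excerpt preceding the statement indicates, this identification resuscitates the sliding argument of Lemma \ref{ch2:lem:inclsottdiff} in the $c$-convex setting and yields a comparison principle for $c$-convex potentials whose associated $c$-Monge-Amp\`ere measure is pinched between positive constants --- a fact that was not available a priori because the MTW condition is not assumed.

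Next I would fix $x_{0}$ in a smaller ball, set $y_{0}:=\cexp_{x_{0}}(\nabla u(x_{0}))$, and, for small $t>0$, consider the $c$-section $S_{t}=\{x:u(x)+c(x,y_{0})\le u(x_{0})+c(x_{0},y_{0})+t\}$. By John's Lemma \ref{lem:john} an affine map $A_{t}$ renormalizes $S_{t}$ to a set sandwiched between $B_{1}$ and $nB_{1}$, and the rescaled potential $\tilde u$ satisfies the Monge-Amp\`ere type equation \eqref{eq:MAcost} with a right-hand side close to a constant $\bar\lambda$: indeed the smallness of $\|c(x,y)+x\cdot y\|_{C^{2}}$ and of $\|f-\mathbf 1_{\mathcal C_{1}}\|_{\infty}+\|g-\mathbf 1_{\mathcal C_{2}}\|_{\infty}$ together with the $C^{1,\beta}$ bound freeze $D_{xx}c$, $D_{xy}c$, $f$, and $g$ on the scale of $S_{t}$. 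Let $v$ be the Alexandrov solution of $\det D^{2}v=\bar\lambda$ on $A_{t}(S_{t})$ with the same boundary data as $\tilde u$. The comparison principle from the previous step, combined with the stability of Alexandrov solutions (Theorem \ref{thm:stability}), forces $\tilde u$ to be uniformly $C^{0}$-close to $v$; Caffarelli's $C^{2,\alpha}$ theory applied to $v$ on a normalized domain then says that $v$ is very close to its second-order Taylor expansion at the origin. Iterating this ``close to a paraboloid at every dyadic scale'' estimate in the spirit of Caffarelli \cite{Caf4}, one concludes that $u\in C^{2,\sigma'}(B_{r})$ for some $\sigma'>0$ and some $r>0$. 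The hard part is precisely this quantitative iteration: one must track how the $C^{0}$ error between $\tilde u$ and $v$ propagates through successive rescalings, and show that it decays fast enough to produce a genuine second-order modulus of continuity, \emph{without} using local regularity for the $c$-Monge-Amp\`ere equation itself (since the MTW tensor may have the wrong sign).

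Once $u\in C^{2,\sigma'}$, the matrix $w_{ij}(x):=u_{ij}(x)+c_{ij}(x,\cexp_{x}(\nabla u(x)))$ is H\"older continuous and, thanks to \eqref{eq:MAcost} and the positive lower bound on $f/(g\circ T_{u})$, bounded above and below by positive multiples of the identity. Hence \eqref{eq:MAcost} is uniformly elliptic with $C^{\sigma'}$ coefficients, and Schauder theory applied to its linearization --- exactly as in the proof of Theorem \ref{po2} --- promotes $u$ to $C^{2,\alpha}$. A standard bootstrap exploiting $c\in C^{k+2,\alpha}(B_{3}\times B_{3})$ and $f,g\in C^{k,\alpha}(B_{1/3})$ then yields $u\in C^{k+2,\alpha}(B_{1/6})$, as claimed.
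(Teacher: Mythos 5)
Your proposal follows essentially the same route the paper sketches: use Proposition \ref{c1alpha} to get $C^{1,\beta}$ regularity and hence the identity $\partial_c u(x)=\cexp_x(\partial^- u(x))$, derive a comparison principle from it, run a Caffarelli-type approximation with solutions of the classical Monge-Amp\`ere equation (in the spirit of \cite{Caf4}) to reach $C^{2,\sigma'}$, and then bootstrap via Schauder estimates. The survey itself only outlines these steps (the full details are in \cite{DFpartial}), and your sketch matches that outline, including the honest identification of the quantitative iteration across scales as the genuinely hard point.
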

These results imply that $T_u$  is of class $C^{0,\beta}$ in neighborhood of $\bar x$
(resp. $T_u$ is of class $C^{k+1,\alpha}$ if $c \in C_{\rm loc}^{k+2,\alpha}$ and $f,g\in C_{\rm loc}^{k,\alpha}$).
Being our assumptions completely symmetric in $x$ and $y$, we can apply the same argument to the optimal map $T^*$ sending $g$ onto $f$.
Since $T^*=(T_u)^{-1}$, it follows that $T_u$ is a global
homeomorphism of class $C_{\rm loc}^{0,\beta}$  (resp. $T_u$ is a global diffeomorphism of class $C_{\rm loc}^{k+1,\alpha}$) outside a closed set of measure zero.

\medskip    \section{Open problems and further perspectives}

In this last section we briefly describe some further material related to Monge-Amp\`ere type equations and state some  open problems.

\medskip
\subsection{General prescribed Jacobian equations}\label{PJsec} Equations \eqref{eq:MA} and \eqref{eq:MAcost} can be seen has particular cases of \emph{prescribed Jacobian equations} of the following form:
\begin{equation}\label{eq:PJ}
\det \bigl(\nabla [T(x, u, \n u)]\bigr)=\psi (x, u, \n u),
\end{equation}
where \(T=T(x,z,p): \Omega\times \mathbb R\times \mathbb R^n\to \mathbb R^n\). In case \(\det( \nabla_p T)\ne 0 \), arguing as in Section \ref{MTWsec}
one sees that \eqref{eq:PJ} becomes
\begin{equation}\label{eq:MAgen}
\det\bigl(D^2u - \mathcal A(x,u,\n u)\bigr)=f(x,u,\n u).
\end{equation}
Let us notice that the classical Monge-Amp\`ere equation corresponds to the case \(T(x,z,p)=p\), and more generally the optimal transport case described in this note
corresponds to the case 
\[
\nabla_x c (x, T(x,p))=-p,
\]
that is \(T(x,p)=\cexp_x(p)\) (compare with Section \ref{gcsect}). 

Motivated by problems arising in geometric optics, Trudinger began in \cite{T,T1} a systematic  study of equations of the form \eqref{eq:PJ}-\eqref{eq:MAgen} in the particular when \(T\) is obtained through a generating function \(G:\mathbb R^n\times \mathbb R^n\times \mathbb R \). Let us briefly present this theory, referring to \cite{T1} for more details.

Given \(G\) as above, a function \(u\) is said \(G\)-convex on \(\Omega\) if for every \(x_0\in \Omega\) there exist \(y_0\) and \(z_0\) such that
\begin{equation}
\label{eq:uG}
u(x_0)= -G(x_0,y_0,z_0)
\quad \text{and}\quad u(x)\ge - G(x,y_0,z_0)\quad \forall \,x\in \Omega.
\end{equation}
Then, under suitable assumptions on $G$ one can define the maps $T=T(x,z,p)$ and $Z=Z(x,z,p)$
through the relations
\[
\nabla_xG(x, T(x,u,p),Z(x,u,p))=-p,\qquad -G(x, T(x,u,p),Z(x,u,p))=u.
\]
With this choice one gets from \eqref{eq:uG} that
\(y_0=T(x_0,u(x_0), Du(x_0))\) whenever $u \in C^1$.
Note that the optimal transportation case corresponds to the choice \(G(x,y,z):=c(x,y)-z\).

Then, under some necessary structural conditions on $G$
(which are the analogous of the MTW condition in this context), in \cite{T,T1}
Trudinger started developing a theory parallel to the one described in Section \ref{MTWsec}.
It would be interesting to extend all the results valid in the optimal transportation case
to this general setting. Also, in case \(G\) does not satisfy such necessary structural conditions,
it would be nice to check whether an analogue of Theorem \ref{thm:c} still holds.

%

\medskip

\subsection{Open Problems}\label{OPsec}We list some open problems related to optimal transportation and the regularity theory for general Monge-Amp\`ere type equations.\\

\begin{enumerate}
\item As we already mentioned, in dimension $2$ stronger
regularity results for solutions of \eqref{eq:MAclassical} are available \cite{AL2,Caf-CPDE,FL}. In particular, Alexandrov showed in \cite{AL2} that \(u\) is continuous differentiable assuming only
the upper bound \(\det D^2u \le \lambda\).
Hence, in relation to Theorem \ref{w21eps} a natural question becomes: is it
possible to prove \(W^{2,1}_{\rm loc}\)  (or even $W^{2,1+\e}_{\rm loc}$) regularity of \(u\) in the \(2\)-d case assuming only an upper bound on
$\det D^2u$? Apart from its own interest, such a result would have applications in relation to
extend Theorem \ref{thm:mainSG} outside of the periodic setting.\\

\item In \(\R^2\) there is a link between the classical Monge-Amp\`ere equation \eqref{eq:MAclassical} and the theory of \emph{quasi-conformal} maps. Indeed, if \(u\) is a smooth solution of \eqref{eq:MAclassical} we can consider consider the maps \(\Phi_1, \Phi_2: \Omega \to \R^2\) defined as 
\[
\Phi_1 (x,y):=(\partial_x u(x,y),y)\qquad \Phi_2(x,y):=(x,\partial_y u(x,y)).
\]
Then, if \(\lambda \le \det D^2 u\le1/ \lambda\), a simple computation shows that \(\Phi:=\Phi_1\circ (\Phi_2)^{-1}\) is \(1/\lambda^2\)-quasiconformal, that is
\[
\lambda^2 \|\nabla \Phi\|_{HS}^2 \le 2  \det \nabla \Phi
\]
where \(\|\cdot\|_{HS}\) is the Hilbert-Schmidt norm.
 In view of the known higher-integrability theory for quasi-conformal maps \cite[Theorem 13.2.1]{AGI}, one is led to conjecture that in dimension $2$ the sharp version of Theorem \ref{w21eps} should be
\[
D^2 u \in L_{w,{\rm loc}}^{\frac {1+\lambda^2}{1-\lambda^2}}(\Omega)\qquad\text{
provided } \lambda\le \det D^2 u\le 1/\lambda \text{ inside }\Omega,
\] 
that is
$$
\sup_{s>0} s^{\frac {1+\lambda^2}{1-\lambda^2}}\,\bigl|\bigl\{|D^2u|>s\bigr\}\cap \Omega'\bigr| \leq C \qquad \forall\,\Omega'\Subset \Omega.
$$
In particular one would get that $D^2u \in L^p_{\rm loc}(\Omega)$ for all $p<\frac {1+\lambda^2}{1-\lambda^2}$.\\

\item As we explained at the end of Section \ref{semigeo}, in the case of smooth initial data existence of \emph{smooth} solutions to the semigeostrophic equations is known only for short time. In analogy with the 2-d incompressible Euler
equations (see for instance \cite{LoeSG} for a discussion on the analogy between these two equations), it 
would be extremely interesting to understand if, at least in the two dimensional periodic case, there are conditions on the initial data which ensure global in time existence of smooth solutions.\\

\item 
The proof of the fact that the MTW condition holds on perturbations of the round sphere
is extremely delicate and relies on the fact that, in the expression of the MTW tensor
\eqref{eq:MTW tensor}, 15 terms ``magically'' combine to form a ``perfect square''
(see \cite[page 127]{FRV}), allowing the authors to prove that the MTW tensor satisfies the right inequality. It seems unlikely to us that this is just a coincidence, and we believe that this fact should
be a sign of the presence of an underlying and deeper structure which has not been yet found.\footnote{To explain this with an example, one may think at covariant derivatives in classical Riemannian geometry: if instead of using them one just uses standard differentiation in charts,
one would end up with complicated expressions which, by ``magic'', have a lot of simplifications.
On the other hand, by using covariant derivatives, formulas automatically simplify.}\\

\item As mentioned in Section \ref{sect:Riem}, the stay-away property
\eqref{eq:stayaway}
is crucial to be able to localize Theorems \ref{thm:C1a} and \ref{thm:C1a mtw0} to the setting
of Riemannian manifolds. In addition, apart from this application,
proving (or disproving) the validity of this property would represent an important
step in the understanding of the geometry of optimal transportation. \\

\item  As explained in Section \ref{sect:cut}, there is a formal argument
which suggests that the $\MTW$ condition should
imply that all  injectivity domains $\I(x)$ are convex for
every $x \in M$. Proving this result in full generality would be
interesting both for the regularity of optimal transport maps and as a purely geometric result.\\

\item  Another natural step in understanding the relation between the MTW condition
and the regularity of optimal map would be to prove that 
the MTW condition
and the convexity of the tangent cut-loci are both necessary and sufficient conditions for the 
continuity of optimal transport maps on Riemannian manifolds (with the cost given by
the squared distance).
As mentioned at the end of Section \ref{sect:cut}, only the necessity is currently known in general,
while the sufficiency is known only in dimension $2$. A nice problem would be to prove the sufficiency
in every dimension. Notice that, if one could also prove (4) above, then as a corollary one would obtain that MTW is necessary and sufficient for regularity of optimal maps on any Riemannian manifold.\\

\item It would be very interesting to give estimates on the Hausdorff dimension of the singular sets \(\Sigma_X\) and \(\Sigma_Y\) appearing in the partial regularity Theorems \ref{thm:c} and \ref{cor:M}. In view of all known example, one is led to conjecture that \({\rm dim}_{\mathcal H} (\Sigma_X),{\rm dim}_{\mathcal H} (\Sigma_Y) \le n-1\). This is not known even in the case of the quadratic cost when the target is not convex, although some partial results in this directions have been
obtained in \cite{F-partial}.  A even more difficult problem would be to prove that the singular sets are always rectifiable or even, under suitable assumption on the densities, smooth.\\

\item Parabolic versions of the Monge-Amp\`ere equation naturally appear in geometric evolution problems (see for instance \cite{DS} and the references therein).
More recently, in \cite{Kit,KSW} the authors studied the parabolic version of \eqref{eq:MAgeneral} 
in the context of optimal transport and they showed that, 
if the MTW condition holds, then
under suitable conditions on the initial and target domains
the solution exists globally in time 
and converges exponentially fast, as $t \to \infty$ to the solution of the original mass transport problem.
Apart from its own interest, this result provides a potential way to numerically solve the optimal transport problem by taking a smooth initial condition and running the parabolic flow for sufficiently long time.
It would be extremely interesting to understand
if some similar results may hold (in some weak sense) even without assuming the validity of the MTW condition, and then obtain partial regularity results in the spirit of Theorem \ref{thm:c}.\\

\item The assumption of the existence of a generating function $G$ ensures that the matrix
$\mathcal A(x,u,\nabla u)$ appearing in \eqref{eq:MAgen} is symmetric.
However, for the general class of equations \eqref{eq:MAgen} arising from the prescribed Jacobian equations \eqref{eq:PJ}, 
there is no reason why the matrix $\mathcal A$ should be symmetric.
It would be very interesting to
understand under which structural assumptions on $\mathcal A$
one can develop a regularity theory (see \cite[Corollary 1.2]{T}
for a result in dimension 2).
\end{enumerate}

\medskip

\textit{Acknowledgments:}
We wish to thank Neil Trudinger for several useful comments
on a preliminary version of this paper.
The second author is partially supported by NSF Grant DMS-1262411.
Both authors acknowledge the support of the ERC ADG Grant GeMeThNES.

%

\end{document}